\def\({\left(}
\def\){\right)}
\def\Nx{\nabla_x}
\def\Cal{\mathcal}
\def\eb{\varepsilon}
\def\tilde{\widetilde}
\newcommand{\be}{\begin{equation} }
\newcommand{\ee}{\end{equation} }
\def \and{\qquad\text{and}\qquad}
\def\Bbb{\mathbb}
\def\Dt{\partial_t}
\def\Dx{\Delta_x}
\def\({\left(}
\def\){\right)}
\def\Nx{\nabla_x}
\def\divv{\operatorname{div}}
\def\eb{\varepsilon}
\def\Cal{\mathcal}
\def\eb{\varepsilon}
\def\R {\mathbb R}
\def\<{\left<}
\def\>{\right>}
\def \and{\qquad\text{and}\qquad}
\def\Bbb{\mathbb}
\def\Dt{\partial_t}
\def\Dx{\Delta_x}
\def\R {\mathbb R}
\newtheorem{proposition}{Proposition}[section]
\newtheorem{theorem}[proposition]{Theorem}
\newtheorem{corollary}[proposition]{Corollary}
\newtheorem{lemma}[proposition]{Lemma}
\theoremstyle{definition}
\newtheorem{definition}[proposition]{Definition}
\newtheorem{remark}[proposition]{Remark}
\numberwithin{equation}{section}
\def\be{\begin{equation}}
\def\ee{\end{equation}}
\def\bp{\begin{proof}}
\def\ep{\end{proof}}
\def \no#1#2#3 {{\bf #1} (#3), #2.}
\def \eds#1#2#3 {#1, #2, #3.}
\title[Navier-Stokes-Brinkman-Forchheimer equation]
{The non-autonomous Navier-Stokes-Brinkman-Forchheimer equation with Dirichlet boundary conditions: dissipativity, regularity, and attractors}
\author[D. Stone  and  S. Zelik]
{Dominic Stone${}^1$ and Sergey Zelik${}^{1,2,3}$}
\address{${}^1$ University of Surrey, Department of Mathematics, Guildford, GU2 7XH, United Kingdom.}
\address{${}^2$ \phantom{e}School of Mathematics and Statistics, Lanzhou University, Lanzhou\\ 730000,
P.R. China}
\address{${}^3$ Keldysh Institute of Applied Mathematics, Moscow, Russia}
\email{d.stone@surrey.ac.uk}
\email{s.zelik@surrey.ac.uk}
\begin{document}

\begin{abstract}We give a comprehensive study of the 3D Navier-Stokes-Brinkman-Forchheimer equations in a bounded domain endowed with the Dirichlet boundary conditions and non-autonomous external forces. This study includes the questions related with the regularity of weak solutions, their dissipativity in  higher energy spaces and the existence of the corresponding uniform attractors
\end{abstract}

\subjclass[2010]{35B40, 35B42, 37D10, 37L25}
\keywords{Navier-Stokes equation, Brinkman-Forchheimer equation, regularity of solutions, non-autonomous attractors}
\thanks{ This work is partially supported by the grant 19-71-30004 of RSF (Russia)}
\maketitle
\tableofcontents

\section{Introduction}\label{s0}
We consider the  non-autonomous Navier-Stokes-Brink\-man-\-Forch\-heimer equations of the form
\begin{equation}\label{0.BF}
\begin{cases}
\Dt u+(u,\Nx)u+\Nx p+f(u)=\Dx u+g(t),\\
 \divv u=0,\ \  u\big|_{\partial\Omega}=0,\  \ u\big|_{t=0}=u_0
\end{cases}
\end{equation}
in a bounded smooth domain $\Omega\subset\R^3$ endowed with Dirichlet boundary conditions. Here $u=(u_1,u_2,u_3)$ and $p$ are an unknown velocity field and pressure respectively, $\Dx$ is the Laplacian with respect to the variable $x$, $g(t)$ are given external forces which satisfy
\begin{equation}\label{0.g}
g\in L^2_{loc}(\R,L^2(\Omega)),
\end{equation}
and $f(u)$ is a Forchheimer nonlinearity which satisfies the following assumptions:
\begin{equation}\label{0.f}
1.\ \  f\in C^1(\R^3,\R^3),\ \ \ 2. \ \  \kappa|u|^{r-1}-L\le f'(u)\le C(1+|u|^{r-1})
\end{equation}
for some positive constants $C$, $L$ and $\kappa$ and a growth exponent $r>1$.
\par
Equations of the form \eqref{0.BF} describe the fluid flows in porous media (see \cite{Au09,Br49,Mus37,Raj07,Stra08} and references therein) and can also appear under the study of tidal dynamics (usually without the inertial term, see \cite{Gor76,Ipa05,Lik81,Man08,Mar89,Moh18} and references therein).  The typical example of $f$ is
\begin{equation}\label{0.ff}
f(u)=\alpha u|u|^{r-1}+\beta u|u|-\gamma u,
\end{equation}
where $\alpha,\beta,\gamma\in\R$ and $\alpha>0$. Keeping in mind this example, in most part of our results, we will also require that the nonlinearity $f$ has a gradient structure
\begin{equation}\label{0.fgrad}
f(u)=\nabla_u F(u)
\end{equation}
for some $F\in C^2(\R^3,\R)$.
\par
Equations of the form \eqref{0.BF} under various assumptions on the nonlinearity and external forces are of a great current interest and many important and interesting results related with well-posedness of this problem, regularity and dissipativity of solutions, existence of attractors and determining functionals, etc. are obtained, see \cite{HR17,KZ,KZ21,MarT16,Wa08,You12} and references therein. In particular, the global existence of weak solutions of this problem can be proved exactly as in the case of usual Navier-Stokes problem (e.g. using the Galerkin approximations), see \cite{tem,KZ}. More interesting is that adding the extra Forchheimer term $f(u)$ provides the regularization of the problem and leads to the uniqueness of a weak solution if $r> 3$, see \cite{KZ,HR17} and also \cite{GalZ}. Recall that for the classical Navier-Stokes problem the global well-posedness of weak solutions is one of the Millennium problems (according to the Clay institute of mathematics). The case of equation \eqref{0.BF} with $r<3$ looks similar to the original Navier-Stokes equation and looks out of reach of the modern theory. The intermediate case $r=3$ may be easier to treat and some particular results in this direction are obtained, see e.g. \cite{HR17}, however, to the best of our knowledge, the global well-posedness of weak solutions for $r=3$ is also not known yet. For this reason, we restrict our analysis to the case $r>3$ only.
\par
The analysis of the further regularity of weak solutions of \eqref{0.BF} strongly depends on the type of boundary conditions. The situation is relatively simple in the whole space or in the case of periodic boundary conditions. Indeed, in this case we may multiply equation \eqref{0.BF} by $\Dx u$ and get an extremely important control of the higher energy norm of the solution $u$ (thanks to the conditions on $f'(u)$ and the cancellation of the pressure term), see \cite{Wa08,You12} and Remark \ref{Rem-reg} below.
\par
Things become much more complicated in  the case of Dirichlet boundary conditions where the term $(\Nx p,\Dx u)$ does not disappear and it is not clear how get the higher energy estimates. Indeed, the standard perturbation arguments based on the initial regularity of weak solutions allow us to treat the nonlinearity $f(u)$ only in the case $r\le\frac73$, but we need $r>3$ as mentioned above in order to treat properly the inertial term $(u,\Nx)u$. Thus, the standard approach fails to give more information for the further regularity of weak solutions no matter what the exponent $r$ is.
\par
This problem was partially resolved in \cite{KZ} for the case of autonomous right-hand side $g(t)\equiv g\in L^2(\Omega)$, using the so-called non-linear localization technique applied to the stationary Brinkman-Forchheimer problem:
\begin{equation}\label{0.sta}
f(u)+\Nx p-\Dx u=h,\ \ \divv u=0,\  u\big|_{\partial\Omega}=0,
\end{equation}
which gives us the control of the $H^2$-norm of a solution $u$ through the $L^2$-norm of $h$:
\begin{equation}\label{0.h2}
\|u\|_{H^2}\le Q(\|h\|_{L^2}).
\end{equation}
Then, the accurate analysis of the form of the function $Q$ allowed the authors to treat the inertial term as a perturbation and differentiation of the initial equation \eqref{0.BF} by $t$ followed by multiplication  by $\Dt u$ gave the control of the $L^2$-norm of $\Dt u(t)$ point-wise in time. This scheme allowed the authors to get finally the $H^2$-control of the norm of $u(t)$ point-wise in time and, since $H^2(\Omega)\subset C(\Omega)$, getting the further regularity of solutions becomes straightforward, see also \cite{KZ09,KZ21} for other non-trivial applications of the non-linear localization technique.
\par
Unfortunately, this method does not work for the case of non-autonomous external forces if we do not have enough regularity of $g(t)$ in time to differentiate it. Although, assuming in addition that
\begin{equation}\label{0.reg}
g\in H^1_{loc}(\R,L^2(\Omega))
\end{equation}
cures the problem, this assumption looks too restrictive, in particular, it excludes an important class of external forces which rapidly oscillate in time, e. g.
$$
g(t,x)=g_0(t/\eb)g_1(x),\ \ g_1\in L^2(\Omega),\ |\eb|\ll1.
$$
In this paper, we prefer not to use this extra assumption and try to understand instead what extra regularity of a weak solution $u(t)$ can be obtained if the external force $g$ satisfies \eqref{0.g} only.
We also believe that this understanding will be helpful for other problems related with equations of the form \eqref{0.BF}, in particular, for the global well-posedness of strong solutions in the intermediate case $r=3$.
\par
The central role in our study plays  assumption \eqref{0.fgrad} and the related second energy identity:
\begin{equation}\label{0.2en}
\frac{d}{dt}\(\frac12\|\Nx u\|^2_{L^2}+(F(u),1)\)+\|\Dt u\|^2_{L^2}=(g,\Dt u)-((u,\Nx)u,\Dt u).
\end{equation}
The only nontrivial term here, which prevents us to complete the estimate, is the one containing the inertial term. In order to handle it, we derive the Strichartz type estimate for the $L^2_{loc}(\R_+,L^\infty(\Omega))$-norm of the solution $u$. Obtaining this estimate is the most difficult and most technical part of the paper. We get this result applying the nonlinear localization technique directly to the non-stationary equation \eqref{0.BF} with the inertial term. In contrast to the previous results related with the stationary equation \eqref{0.sta}, we are unable to get the maximal regularity estimate for \eqref{0.BF} (i.e. to verify that every term in the equation belongs to the space $L^2_{loc}(\R_+,L^2(\Omega))$), but the obtained Strichartz type estimate mentioned above is enough for our purposes. This allows us to establish the following key result.

\begin{theorem}\label{Th0.main} Let the assumptions \eqref{0.f}, \eqref{0.fgrad} and \eqref{0.g} hold with $r>3$ and let $u_0\in\Phi$, where
\begin{equation}
\Phi:=H_0^1(\Omega)\cap L^{r+1}(\Omega)\cap\{\divv u_0=0\}.
\end{equation}
Then the unique weak solution $u(t)$ of problem \eqref{0.BF} satisfies the following estimate:
\begin{equation}\label{0.dis}
\|u(t)\|_{\Phi}^2\le Q\(\|u_0\|_{\Phi}^2e^{-\beta t}+\int_0^te^{-\beta(t-s)}\|g(s)\|^2_{L^2}\,ds\),
\end{equation}
for some positive constant $\beta$ and a monotone function $Q$ which are independent of $u_0$ and $t$.
\end{theorem}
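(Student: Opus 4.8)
The plan is to control the natural energy functional
\[
E(t):=\tfrac12\|\Nx u(t)\|_{L^2}^2+(F(u(t)),1),
\]
and to show that, up to fixed additive and multiplicative constants, $E(t)+C$ is equivalent to $\|u(t)\|_\Phi^2$. Integrating the two-sided bound $\kappa|u|^{r-1}-L\le f'(u)\le C(1+|u|^{r-1})$ from \eqref{0.f} twice gives $c_0|u|^{r+1}-c_1\le F(u)\le c_2(1+|u|^{r+1})$, while a single integration gives $f(u)\cdot u\ge c_3|u|^{r+1}-c_4$. Since $r+1>2$, these yield
\[
c\,(\|\Nx u\|_{L^2}^2+\|u\|_{L^{r+1}}^{r+1})-C\le E(t)\le Q(\|u\|_\Phi^2),
\]
so it suffices to prove \eqref{0.dis} for $E(t)$ in place of $\|u\|_\Phi^2$. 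Two ingredients will then be combined: the standard (first) energy estimate, which produces the dissipative control of the \emph{time average} of $E$, and the second energy identity \eqref{0.2en}, which produces a pointwise differential inequality once the inertial term is tamed by the Strichartz-type estimate announced in the introduction.

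First I would multiply \eqref{0.BF} by $u$. The pressure term and the inertial term $((u,\Nx)u,u)$ both vanish (the latter since $\divv u=0$ with the Dirichlet condition), and the coercivity $f(u)\cdot u\ge c_3|u|^{r+1}-c_4$ together with $(g,u)\le\tfrac14\|\Nx u\|_{L^2}^2+C\|g\|_{L^2}^2$ and the Poincaré inequality gives, for some $\al>0$,
\[
\ddt\|u\|_{L^2}^2+\al\|u\|_{L^2}^2+\tfrac12\|\Nx u\|_{L^2}^2+c\|u\|_{L^{r+1}}^{r+1}\le C\|g\|_{L^2}^2+C.
\]
Gronwall's inequality yields $L^2$-dissipativity, and integration over $[t-1,t]$ gives a dissipative bound on $\int_{t-1}^t(\|\Nx u\|_{L^2}^2+\|u\|_{L^{r+1}}^{r+1})\,ds$, hence on $\int_{t-1}^tE(s)\,ds$, of the form $Q\big(\|u_0\|_\Phi^2e^{-\beta t}+\int_0^te^{-\beta(t-s)}\|g\|_{L^2}^2\,ds\big)$.

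Next I turn to \eqref{0.2en}, whose only dangerous term is the inertial one, estimated by
\[
|((u,\Nx)u,\Dt u)|\le\|u\|_{L^\infty}\|\Nx u\|_{L^2}\|\Dt u\|_{L^2}\le\tfrac14\|\Dt u\|_{L^2}^2+\|u\|_{L^\infty}^2\|\Nx u\|_{L^2}^2.
\]
Together with $(g,\Dt u)\le\tfrac14\|\Dt u\|_{L^2}^2+\|g\|_{L^2}^2$ and the equivalence $\|\Nx u\|_{L^2}^2\le 2E+C$, this absorbs $\|\Dt u\|_{L^2}^2$ and leaves
\[
\ddt E\le 2\|u\|_{L^\infty}^2\,E+b(t),\qquad b(t):=C\|u\|_{L^\infty}^2+2\|g\|_{L^2}^2+C,
\]
in which the coefficient $a(t):=2\|u(t)\|_{L^\infty}^2$ is precisely the quantity controlled by the dissipative form of the Strichartz-type estimate, $\int_{t-1}^t\|u\|_{L^\infty}^2\,ds\le Q(\ldots)$ with the same dissipative argument.

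Finally I would apply the uniform Gronwall lemma on $[t-1,t]$, bounding $E(t)$ by $\big(\int_{t-1}^tE\,ds+\int_{t-1}^tb\,ds\big)\exp\big(\int_{t-1}^ta\,ds\big)$. The average $\int_{t-1}^tE$ is dissipative by the first step, the forcing integral $\int_{t-1}^t\|g\|_{L^2}^2$ is controlled by $e^\beta\int_0^te^{-\beta(t-s)}\|g\|_{L^2}^2\,ds$, and both $\int_{t-1}^t\|u\|_{L^\infty}^2$ and the exponential factor are dominated by a monotone function of the same dissipative combination; assembling these and treating $t\in[0,1]$ by the short-time energy estimate gives \eqref{0.dis}. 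The main obstacle is twofold: the whole scheme must be run on Galerkin approximations, since \eqref{0.2en} presupposes the very $\Phi$-regularity being proved; and, crucially, because $\int_{t-1}^t\|u\|_{L^\infty}^2\,ds$ is merely bounded and not small, a naive Gronwall argument yields only boundedness, so it is the \emph{dissipative} (rather than raw-data-dependent) form of the Strichartz estimate that keeps the exponential prefactor a monotone function of the dissipative argument and thereby recovers the decay encoded in \eqref{0.dis}.
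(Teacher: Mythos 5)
Your reduction of the theorem to an $L^2_t L^\infty_x$ bound on $u$ is correct and is essentially Step~4 of the paper's proof of Theorem~\ref{Th2.main}: the equivalence of $E(u)$ with $\|u\|_\Phi^2$ via \eqref{2.F}, the first energy estimate \eqref{2.en-bas}, the second energy inequality with the inertial term absorbed through $\|u\|_{L^\infty}\|\Nx u\|_{L^2}\|\Dt u\|_{L^2}$ (the paper's \eqref{2.gron}, which merely adds $L\|u\|^2_{L^2}$ to keep the energy nonnegative), and a Gronwall closing step (the paper runs Gronwall on $[0,1]$ and combines it with the smoothing estimate \eqref{2.smo-h1} and the $L^2$-dissipativity of \eqref{2.en}, rather than your uniform Gronwall on sliding intervals $[t-1,t]$ --- an equivalent bookkeeping). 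You also correctly identify the crucial point that only the \emph{dissipative} form of the $L^\infty$ estimate preserves the decay. However, there is a genuine gap: you take that Strichartz-type estimate as given (``announced in the introduction''), whereas it is not a prior result --- it is the heart of the proof of this very theorem and occupies Steps~1--3 of the paper: the interior nonlinear-localization estimate (multiplication by $\sum_i\partial_{x_i}(\varphi\partial_{x_i}u)$ and handling $\Dx p$ via the pressure equation), the tangential boundary estimates via $\partial_\tau^*\partial_\tau u$ with the delicate exclusion of the pressure through maximal $L^q$-regularity for the non-stationary Stokes problem (including the special structure $H=\divv W+h$ of the divergence defect and the estimate \eqref{2.str-est} of $\partial_\tau((u,\Nx)u)$), and the anisotropic interpolation giving $\|u\|_{L^\infty}\le C\|u\|_{L^{r+1}}^{1/4}\|u\|_{W^{1,3q}}^{3/4}$ and hence \eqref{2.inf-est}. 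There is no soft argument producing $u\in L^2_{loc}(\R_+,L^\infty)$ for $r>3$ under Dirichlet boundary conditions: the incompatibility of the pressure term with the multiplier $\Dx u$ is precisely the obstruction the paper is built to overcome, so as written your proposal proves only the implication ``Strichartz estimate $\Rightarrow$ Theorem,'' not the theorem.

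Your justification strategy is also problematic. You propose to run the whole scheme on Galerkin approximations, but the localized multipliers $\partial_{x_i}(\varphi\partial_{x_i}u)$ and $\partial_\tau^*\partial_\tau u$ do not belong to the Galerkin subspaces, and the paper explicitly notes (proof of Corollary~\ref{Cor1.sm}) that the nonlinear localization does not seem to work at the Galerkin level; consequently you could not obtain \eqref{2.inf-est} uniformly for the approximations, and the coefficient $a(t)=2\|u\|_{L^\infty}^2$ in your Gronwall inequality would be uncontrolled exactly where you need it. The paper instead approximates $g$ and $u_0$ by smooth functions, invokes the existence of $H^2$-smooth solutions from \cite{KZ} so that the regularity \eqref{2.reg} legitimizes all the multiplications (Step~5), and passes to the limit using the uniformity of the estimates. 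To repair your proposal you would need to supply the full nonlinear-localization machinery of Steps~1--3 together with this approximation argument.
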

In particular, if the right-hand side $g(t)$ is bounded as $t\to\infty$ in the sense that
\begin{equation}
\|g\|_{L^2_b(\R,L^2(\Omega))}:=\sup_{t\in\R}\|g\|_{L^2(t,t+1;L^2(\Omega))}<\infty,
\end{equation}
estimate \eqref{0.dis} gives us a uniform dissipative estimate
\begin{equation}\label{0.dis1}
\|u(t)\|_\Phi\le Q(\|u_0\|_\Phi)e^{-\beta t}+Q(\|g\|_{L^2_b})
\end{equation}
for some new monotone function $Q$ and this, in turn, allows us to study the long-time behaviour of solutions of \eqref{0.BF} in terms of uniform attractors for the cocycle generated by equation \eqref{0.BF}, see \cite{BV92,CLR12,ChVi02,KlR11,Zel22} and references therein.
 \par
 Note that, in contrast to the standard situation, it looks difficult here to establish the existence of a compact uniformly absorbing set for the corresponding cocycle based only on the compactness of Sobolev embeddings, so we utilize the energy method (see  \cite{Ball04}), the central part of which is to establish the energy identity \eqref{0.2en} for any weak solution of \eqref{0.BF}. Surprisingly, this is also  not  straightforward since the regularity of a weak solution provided by Theorem \ref{Th0.main} is not enough to justify the multiplication of \eqref{0.BF} by $\Dt u$. We overcome this difficulty using the convexity argument and the fact that the energy in \eqref{0.2en} is a compact perturbation of a uniformly convex functional. As a result, we prove that, under the mild extra assumption that the external forces $g$ are weakly normal, see \cite{Lu06,LWZ05,Lu07,MZS09,Z15} and also Definition \ref{Def4.w-n} below, the cocycle generated by solution operators of equation \eqref{0.BF} possesses a uniform attractor in the strong topology of $\Phi$ and is generated by all complete bounded solutions, see \S\ref{s4} below.
 \par
 The paper is organized as follows. In \S\ref{s1} we consider the stationary case of equation \eqref{0.BF} and refine the results of \cite{KZ} based on the non-linear localization technique. The obtained  results are crucial for the study of the non-autonomous case.
 \par
 In \S\ref{s2}, which is the key section of our paper, we apply the non-linear localization technique to the non-autonomous equation \eqref{0.BF} in order to get the above mentioned Strichartz type estimate and to give the proof of Theorem \ref{Th0.main}. We also give a number of extra regularity results which will be essentially used later.
 \par
 \S\ref{s3} is devoted to the verification of the energy identity
 \eqref{0.2en}. In order to do this, we utilize the convexity arguments and extend the ideas of \cite{MieZ14} to our case.
 \par
 Finally, in \S\ref{s4}, we briefly recall the main ideas and concepts of the attractors theory for non-autonomous dynamical systems and verify the existence of a uniform attractor for the cocycle related with equation \eqref{0.BF} in the phase space $\Phi$.

\section{The stationary case}\label{s1} In this section, we consider the following stationary version of the Brink\-man-Forchheimer equation:
\begin{equation}\label{1.BF}
\Dx u-\Nx p-f(u)=g,\ \ \divv u=0,\ \ u\big|_{\partial\Omega}=0.
\end{equation}
Here $u=(u_1(x),u_2(x),u_3(x))$ and $p=p(x)$ are an unknown velocity vector field and pressure respectively, $\Omega\subset \R^3$ is a bounded domain with a sufficiently smooth boundary, $f\in C^1(\R^3,\R^3)$ is a given nonlinearity which satisfies the following dissipativity and growth restrictions:
\begin{equation}\label{1.f}
\kappa(|u|^{r-1}+1)\le f'(u)\le C(1+|u|^{r-1})
\end{equation}
for some given positive constants $\kappa$ and $C$ and the exponent $r\in[1,\infty)$, and $g\in L^2(\Omega)$ are given external forces. We also assume that the mean of pressure is equal to zero
$$
\<p\>:=\frac1{|\Omega|}\int_\Omega p(x)\,dx=0.
$$
We are mainly interested in the case $r\gg1$, where the nonlinearity is not subordinated to the linear part of the equation. Note that this problem under the similar assumptions has been already considered
in \cite{KZ}, however, the estimates obtained there are not enough for our purposes, so in the present section, we refine them as well as deduce a number of new ones.
\par
We start with the standard energy estimate.
\begin{proposition}\label{Prop1.en} Let the nonlinearity $f$ satisfy \eqref{1.f} and $g\in L^2(\Omega)$. Let also $u$ be a sufficiently smooth solution of equation \eqref{1.BF}. Then, the following estimate holds:
\begin{equation}\label{1.en}
\|u\|^2_{H^1}+\|u\|^{r+1}_{L^{r+1}}+\|u\|_{W^{2,q}}^q+\|p\|_{W^{1,q}}^q+\|f(u)\|_{L^q}^q\le C\|g\|_{L^q}^q,
\end{equation}
where $\frac1{r+1}+\frac1q=1$ and $C$ is a positive constant which is independent on $u$.
\end{proposition}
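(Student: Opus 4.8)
This is the standard energy/elliptic-regularity estimate for the stationary Brinkman--Forchheimer problem, so I would split the proof into two parts: first the basic energy identity producing the $H^1$ and $L^{r+1}$ bounds, then the elliptic $W^{2,q}$ and $W^{1,q}$ regularity obtained by treating the equation as a Stokes system with a controlled right-hand side. Throughout, I work with a sufficiently smooth solution so that all multiplications are justified, and $q$ is the conjugate exponent to $r+1$.

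\textbf{Step 1: the energy estimate.}
I would multiply \eqref{1.BF} by $u$ and integrate over $\Omega$. The pressure term drops out since $\divv u=0$ and $u|_{\partial\Omega}=0$, giving
\be
\|\Nx u\|_{L^2}^2+(f(u),u)=-(g,u).
\ee
The lower bound in \eqref{1.f} on $f'$ implies, after integrating along the segment from $0$ to $u$, a coercivity estimate of the form $(f(u),u)\ge \kappa'(\|u\|_{L^{r+1}}^{r+1}+\|u\|_{L^2}^2)-C'$, while the right-hand side is absorbed by Cauchy--Schwarz and Young's inequality. This yields the control of $\|u\|_{H^1}^2+\|u\|_{L^{r+1}}^{r+1}$ by $\|g\|_{L^2}$, hence (since $L^q\supset L^2$ on a bounded domain and $\|g\|_{L^2}\lesssim\|g\|_{L^q}$ fails in the wrong direction) I would instead keep the dependence on $\|g\|_{L^q}$ by using the duality pairing $|(g,u)|\le\|g\|_{L^q}\|u\|_{L^{r+1}}$ directly, then absorb $\|u\|_{L^{r+1}}$ into the coercive term via Young's inequality with exponents $r+1$ and $q$.

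\textbf{Step 2: elliptic regularity and the nonlinear term.}
The growth bound $f'(u)\le C(1+|u|^{r-1})$ gives the pointwise estimate $|f(u)|\le C(1+|u|^{r})$, so $\|f(u)\|_{L^q}^q\lesssim 1+\|u\|_{L^{rq}}^{rq}$. Since $rq=r\cdot\frac{r+1}{r}=r+1$, this is exactly controlled by the $L^{r+1}$-norm already bounded in Step 1; this is the key arithmetic identity that makes the chosen exponent $q$ work. I would then rewrite \eqref{1.BF} as the Stokes system $-\Dx u+\Nx p=-g-f(u)$, $\divv u=0$ with homogeneous Dirichlet data, whose right-hand side now lies in $L^q$ with norm controlled by $\|g\|_{L^q}$. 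Invoking the $L^q$ maximal regularity (Agmon--Douglis--Nirenberg / Cattabriga) for the stationary Stokes operator on a smooth bounded domain yields
\be
\|u\|_{W^{2,q}}+\|p\|_{W^{1,q}}\le C\(\|g\|_{L^q}+\|f(u)\|_{L^q}\),
\ee
and combining with the bound on $\|f(u)\|_{L^q}$ from Step 1 closes the estimate.

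\textbf{Main obstacle.}
The delicate point is the bookkeeping of exponents: one must check that the nonlinear term $f(u)$, which a priori only inherits integrability from the energy estimate, lands precisely in $L^q$ and no worse, and that all the intermediate constants depend only on $\|g\|_{L^q}$ (not on higher norms of $g$). The identity $rq=r+1$ is what closes the loop, and I would be careful that the coercivity constant from the lower bound on $f'$ genuinely dominates the terms pulled out by Young's inequality, so that no circular dependence on $\|u\|_{W^{2,q}}$ enters the right-hand side. The Stokes regularity theory on a smooth bounded domain is classical and can be quoted, so it is not the analytic difficulty; the care lies entirely in the nonlinear estimate and the exponent matching.
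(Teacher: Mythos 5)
Your proposal is correct and follows essentially the same route as the paper: the energy estimate from multiplying by $u$ with the duality pairing $|(g,u)|\le\|g\|_{L^q}\|u\|_{L^{r+1}}$ absorbed by Young's inequality, the observation that $rq=r+1$ places $f(u)$ in $L^q$, and the maximal $L^q$-regularity for the linear Stokes system $\Dx u-\Nx p=g+f(u)$ to conclude the $W^{2,q}$ and $W^{1,q}$ bounds. Nothing essential is missing.
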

\begin{proof} Multiplying equation \eqref{1.BF} by $u$, integrating over $\Omega$ and using the H\"older inequality, we get the energy identity
$$
\|\Nx u\|^2_{L^2}+(f(u),u)=(g,u)\le C_\kappa\|g\|^q_{L^q}+\frac\kappa2\|u\|_{L^{r+1}}^{r+1},
$$
where $(U,V)$ stands for the standard inner product in $[L^2(\Omega)]^3$. Using our assumptions on $f$, we deduce from here that
\begin{equation}\label{1.enn}
\|\Nx u\|^2_{L^2}+\|u\|_{L^{r+1}}^{r+1}+\|f(u)\|_{L^q}^q\le C\|g\|_{L^q}^q
\end{equation}
for some positive constant $C$. To complete the proof of the proposition, we write out equation \eqref{1.BF} as a linear equation
\begin{equation}
\Dx u-\Nx p=\tilde g:=g+f(u)
\end{equation}
and apply the maximal $L^q$-regularity result for solutions of the linear Stokes equation, see e.g. \cite{Galdi}. Together with the already obtained estimate for the $L^q$-norm of $f(u)$, this give us the desired control of the $W^{2,q}$-norm of $u$ as well as the $L^q$-norm of pressure $p$.
\end{proof}
At the next step, we want to obtain higher energy estimates for the solutions of \eqref{1.BF}. In the case of periodic boundary conditions, we get such estimates by multiplying the equation by $\Dx u$. However, this does not work at least in the straightforward way for the Dirichlet BC since the term $(\Nx p,\Dx u)$ becomes out of control because of non-trivial boundary terms, which appear under the integration by parts. Following \cite{KZ}, we use the nonlinear localization technique in order to overcome the problem.

\begin{theorem}\label{Th1.main} Let $u$ be a sufficiently smooth solution of equation \eqref{1.BF}. Then, the following partial regularity estimate holds:
\begin{equation}\label{1.3q}
\|u\|^2_{W^{1,3q}}\le C(1+\|g\|^2_{L^2}),
\end{equation}
where the constant $C$ is independent of $u$ and $g$. Moreover, the following conditional result holds:
\begin{equation}\label{1.ff}
\|f(u)\|_{L^{3q}}^q\le C\(1+\|g\|^q_{L^q}+(1+\|g\|^2_{L^2})^{2/3}(f'(u)\Nx u,\Nx u)^{1/3}\).
\end{equation}
\end{theorem}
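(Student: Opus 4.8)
The plan is to follow the nonlinear localization scheme of \cite{KZ}, taking Proposition \ref{Prop1.en} as the baseline: it already delivers the $H^1$, $L^{r+1}$ and $W^{2,q}$ bounds together with $f(u)\in L^q$, and these will be used repeatedly to absorb lower-order contributions. The obstruction to a naive argument is the one flagged before the statement: multiplying \eqref{1.BF} by $\Dx u$ and integrating, the pressure produces the boundary integral $\int_{\partial\Omega} p\,(\Dx u\cdot n)$, which under Dirichlet conditions does not vanish and is not controlled by the available norms. Moreover, plain Sobolev embedding is insufficient, since for $q<2$ one only has $W^{2,q}\hookrightarrow W^{1,q^*}$ with $q^*=3q/(3-q)<3q$, so Proposition \ref{Prop1.en} alone cannot yield \eqref{1.3q}. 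Hence genuinely new input is required.

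First I would fix a finite partition of unity subordinate to a cover of $\bar\Omega$ by interior balls and boundary charts. On an interior patch the localized field $\chi u$ has empty boundary, so testing against $\Dx(\chi u)$ is legitimate and the pressure integrates out up to lower-order commutators with $\chi$; the monotonicity \eqref{1.f} then produces the good weighted term $(f'(u)\Nx u,\Nx u)\gtrsim\int(1+|u|^{r-1})|\Nx u|^2$ with a favourable sign. On a boundary chart I would flatten $\partial\Omega$ and work with tangential derivatives (or tangential difference quotients): since $u$ vanishes on $\partial\Omega$, tangential differentiation preserves the homogeneous boundary condition and generates no boundary terms, so the tangential part of $\Nx^2 u$ is controlled, after which the missing normal-normal second derivative is recovered algebraically from the equation, using $\divv u=0$ to handle the normal derivative of the pressure. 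This boundary computation is the step I expect to be the main obstacle: it must be organized so that the dangerous pressure contribution is genuinely absorbed rather than merely shifted, while every remaining term stays controllable by $\|g\|_{L^2}$ and the Proposition \ref{Prop1.en} norms.

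With the localized estimates assembled, I would combine them with the Stokes maximal $L^s$-regularity (as in Proposition \ref{Prop1.en}) and a Gagliardo--Nirenberg interpolation against the $W^{2,q}$ and $L^{r+1}$ bounds. The weighted gradient control gained from the localization upgrades the integrability of $\Nx u$ past the threshold that plain embedding misses, landing exactly on $L^{3q}$; the delicate part is tracking the exponents so that the right-hand side depends only on $\|g\|_{L^2}^2$, and not on higher norms of $g$. This gives \eqref{1.3q}. I would also record that, since $3q>3$, one has $W^{1,3q}\hookrightarrow C(\bar\Omega)$, so \eqref{1.3q} in particular bounds $\|u\|_{L^\infty}$ by $C(1+\|g\|_{L^2}^2)^{1/2}$, a fact exploited in the next step.

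For the conditional estimate \eqref{1.ff} I would return to the Stokes form $\Dx u-\Nx p=g+f(u)$, apply maximal $L^{3q}$-regularity, and thereby reduce everything to bounding $\|f(u)\|_{L^{3q}}$. Using the growth bound in \eqref{1.f}, which gives $|f(u)|\lesssim 1+|u|^r$, I would estimate this norm by a Gagliardo--Nirenberg interpolation balancing the energy bound $\|u\|_{L^{r+1}}$, the $L^\infty$ bound from \eqref{1.3q}, and the weighted Dirichlet energy $(f'(u)\Nx u,\Nx u)$; the particular weights are what produce the factors $(1+\|g\|_{L^2}^2)^{2/3}$ and $(f'(u)\Nx u,\Nx u)^{1/3}$. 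The statement remains only conditional because this weighted energy cannot be absorbed into the left-hand side here: closing it would require controlling $\int|u|^{2(r-1)}|\Nx u|^2$, a strictly higher power than the available $\int|u|^{r-1}|\Nx u|^2$, so the term is retained on the right, where it is tailored for the non-autonomous estimates to come.
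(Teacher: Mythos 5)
Your overall scheme (interior cutoff plus tangential differentiation at the boundary, following \cite{KZ}) is the right starting point, but the proposal leaves genuine gaps at exactly the places where the proof does its real work. Already in the interior step the pressure does not merely ``integrate out up to lower-order commutators'': since $\divv(\chi u)=\Nx\chi\cdot u\neq0$, one is left with terms like $(\Dx p,\Nx\chi\cdot u)$, which are handled only by invoking the pressure Poisson equation $\Dx p=-\divv f(u)-\divv g$ and a cutoff with $|\Nx\varphi|\le C\varphi^{1/2}$, so that the resulting critical term $(|u|^{r},|\Nx u|\,|\Nx\varphi|)$ can be absorbed into the weighted good term. In the tangential step you explicitly acknowledge that the pressure must be ``genuinely absorbed'' but supply no mechanism, and none of the standard ones applies: $\partial_\tau u$ is not divergence free, so testing with $\partial_\tau^*\partial_\tau u$ leaves a term which reduces, after commutator manipulations, to $(\widetilde A(x)\Nx(\partial_\tau p),u)$ --- a full gradient of the differentiated pressure. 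The paper controls it by differentiating the equation along $\tau$, splitting the resulting Stokes system into three problems \eqref{1.12}, and using $H^{-1}$ Stokes regularity for the piece driven by $\partial_\tau g$ (essential, since $g\in L^2$ only and cannot be differentiated), $L^q$ regularity for the commutator pieces, and a H\"older absorption of $\|\partial_\tau f(u)\|_{L^q}^q$ into $(|u|^{r-1},|\partial_\tau u|^2)$. More decisively, your plan to ``recover the normal-normal second derivative algebraically from the equation'' cannot yield \eqref{1.3q}: the right-hand side contains $f(u)$, known only in $L^q$ at this stage, so solving for $\partial_n^2u$ returns $L^q$ information already contained in Proposition \ref{Prop1.en}, and, as you note yourself, $W^{2,q}$ embeds only into $W^{1,3q/(3-q)}$. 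The missing idea is \emph{anisotropic}: the paper never obtains $\partial_n^2u\in L^2$; it combines $\Nx u\in W^{1,q}$ in the normal direction with $\partial_\tau\Nx u\in L^2$ in the two tangential directions and uses a mixed-norm interpolation to get $\|\Nx u\|_{L^{3q}}\le C\|u\|_{W^{2,q}}^{1/3}\|\Nx(\partial_\tau u)\|_{L^2}^{2/3}$; no isotropic Gagliardo--Nirenberg juggling of the norms you list reaches $L^{3q}$.

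The same directional structure is what produces the exponents in \eqref{1.ff}, and your route does not. There is nothing to be gained from maximal $L^{3q}$ Stokes regularity here: \eqref{1.ff} \emph{is} the bound on $\|f(u)\|_{L^{3q}}$. Writing $\|f(u)\|_{L^{3q}}^q\le C(1+\|u\|^{r+1}_{L^{3(r+1)}})$ and setting $v=|u|^{(r+1)/2}$, the isotropic route $\|v\|_{L^6}^2\le C\|v\|_{H^1}^2$ gives the weighted energy to the \emph{first} power, not $1/3$, while interpolating instead against your $L^\infty$ bound gives an unconditional but far weaker estimate with the $r$-dependent factor $(1+\|g\|^2_{L^2})^{(r+1)/3}$, which would destroy the bootstrap exponents in Corollary \ref{Cor1.h2}. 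The paper's actual device is the refined trilinear Sobolev inequality $\|v\|^3_{L^6}\le C(\|v\|^3_{L^2}+\|\partial_{y_1}v\|_{L^2}\|\partial_{y_2}v\|_{L^2}\|\partial_{y_3}v\|_{L^2})$ in boundary-flattened coordinates: the two tangential factors are bounded by $C(1+\|g\|^2_{L^2})^{1/2}$ thanks to \eqref{1.14}, and only the normal factor survives --- this is precisely where $(1+\|g\|^2_{L^2})^{2/3}$ and $(f'(u)\Nx u,\Nx u)^{1/3}$ come from. Relatedly, your closing diagnosis of the conditionality is wrong: what is uncontrolled is not a higher power $\int|u|^{2(r-1)}|\Nx u|^2$ but the \emph{normal component} of the same weighted energy, $(f'(u)\partial_n u,\partial_n u)$, whose control is deferred to Corollary \ref{Cor1.h2} via multiplication by $\Dx u$.
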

\begin{proof} We divide the proof of this theorem on several steps.
\par
{\it Step 1. Interior regularity.} Let $\varphi(x)$ be a sufficiently smooth cut-off function which vanishes near the boundary and equal to one identically outside of the $\eb$-neighbourhood of $\partial\Omega$ and which satisfies the inequality
$$
|\Nx\varphi(x)|\le C_\eb\varphi(x)^{1/2},\ \ x\in\R^3.
$$
Since the domain $\Omega$ is smooth, such a function exists for every $\eb>0$.
\par
Let us multiply equation \eqref{1.BF} by $\sum \partial_{x_i}(\varphi\partial_{x_i}u)$ and transform the most complicated term containing pressure as follows:
\begin{multline}\label{1.4}
\(\Nx p,\sum \partial_{x_i}(\varphi\partial_{x_i}u)\)=-\(p, \sum_i\partial_{x_i}(\sum_j\partial_{x_j}\phi\partial_{x_i}u_j)\)=\\
=\sum_i\(\partial_{x_i}p,\sum_j\partial_{x_j}\varphi\partial_{x_i}u_j\)=\sum_i\(\partial_{x_i}p,\partial_{x_i}(\sum_j \partial_{x_j}\varphi u_j)\)-\\-\sum_{i,j}\(\partial_{x_i}p,\partial_{x_i,x_j}\varphi u_j\)=
-\(\Dx p,\Nx \varphi\cdot u\)- \sum_{i,j}\(\partial_{x_i}p,\partial_{x_i,x_j}\varphi u_j\).
\end{multline}
The last term in the RHS is easy to estimate using \eqref{1.en}:
$$
|\sum_{i,j}\(\partial_{x_i}p,\partial_{x_i,x_j}\varphi u_j\)|\le \|\Nx p\|_{L^q}\|u\|_{L^{r+1}}\le C\|g\|_{L^q}\|g\|_{L^q}^{1/r}=C\|g\|_{L^q}^q.
$$
To estimate the first term, we take the divergence from both sides of equation \eqref{1.BF} and obtain the Poisson equation for pressure:
\begin{equation}\label{1.5}
\Dx p=-\divv f(u)-\divv g.
\end{equation}
Thus, integrating by parts again, we arrive at
\begin{multline}
|(\Dx p,\Nx\varphi\cdot u)|=|(\divv f(u)+\divv g, \Nx\varphi \cdot u)|\le\\\le C(|g|,|\Nx u|)+C(|f(u)|,|u|)+(|f(u)|,|\Nx\varphi|\,|\Nx u|)\le\\\le C\|g\|^2_{L^2}+C(|u|^r+1,|\Nx u|\, |\Nx \varphi|).
\end{multline}
Using that $|\Nx \varphi|\le C\varphi^{1/2}$, we transform the last term as follows:
\begin{multline}
(|u|^r,|\Nx u|\,|\Nx\varphi|)\le C\(|u|^{(r-1)/2}\varphi^{1/2}|\Nx u|,|u|^{(r+1)/2}\)\le \\\le \beta(|u|^{r-1},\varphi|\Nx u|^2)+C_\beta\|u\|_{L^{r+1}}^{r+1}\le \beta(|u|^{r-1},\varphi|\Nx u|^2)+C_\beta(1+\|g\|^2_{L^2}),
\end{multline}
where $\beta>0$ is arbitrary.
Combining  the obtained estimates, we get
\begin{multline}\label{1.6}
|\(\Nx p,\sum\partial_{x_i}(\varphi\partial_{x_i}u)\)|\le \beta(\varphi|u|^{r-1},|\Nx u|^2)+C_\beta(1+\|g\|^2_{L^2}).
\end{multline}
It is now not difficult to complete the interior estimate. Indeed, multiplying equation \eqref{1.BF} by $\sum_i\partial_{x_i}(\varphi\partial_{x_i}u)$ and arguing in a standard way, with the help of \eqref{1.6}, we get
\begin{multline}
(\varphi,|\Dx u|^2)+(\varphi f'(u)\Nx u,\Nx u)\le C\|g\|^2+C\|\Nx u\|^2_{L^2}+\\+|\(\Nx p,\sum \partial_{x_i}(\varphi\partial_{x_i}u)\)|\le C_\beta(1+\|g\|^2_{L^2})+\beta(\varphi|u|^{r-1},|\Nx u|^2).
\end{multline}
Finally, due to our assumptions, $f'(u)\ge\kappa(|u|^{r-1}+1)$, so fixing $\beta<\kappa/2$, we arrive at the desired estimate
\begin{equation}\label{1.7}
(\varphi,|\Dx u|^2)+(\varphi |u|^{r-1},|\Nx u|^2)\le C(1+\|g\|^2_{L^2}),
\end{equation}
which gives the interior $H^2$-regularity.
\par
{\it Step 2. Boundary regularity: tangential directions.} We now look at a small neighbourhood of the boundary $\partial\Omega$. We introduce a family of orthogonal vector fields $\tau^1$, $\tau^2$ and $n$ in $\bar\Omega$ which are non-degenerate near the boundary and such that $\tau^i\big|_{\partial\Omega}$ are tangent vector fields and $n\big|_{\partial\Omega}$ is a normal vector field. Note that $n(x)$ is globally defined, but $\tau^i$ are in general only locally defined, so being pedantic we need to use the cut-off procedure in order to localize them. In order to avoid technicalities, we however will assume that they are globally defined. Let $\tau=(\tau_1,\tau_2,\tau_3)$
be one of the vectors $\tau^1$ or $\tau^2$. Then, we may define the corresponding differential operators
$$
\partial_\tau u:=\sum_{i=1}^3\tau_i\partial_{x_i}u,\ \ \partial^*_\tau u=-\sum_{i=1}^3\partial_{x_i}(\tau_i u).
$$
Our idea is to multiply equation \eqref{1.BF} by $\partial_\tau^*\partial_\tau u$ and get the $H^1$-norm of $\partial_\tau u$ analogously to Step 1 using the fact that
$\partial_\tau u\big|_{\partial\Omega}=0$. As before, we start with the most complicated term related with the pressure:
$$
(\Nx p, \partial_\tau^*(\partial_\tau u))=(\partial_\tau\Nx p,\partial_\tau u)=(\Nx(\partial_\tau p),\partial_\tau u)+([\partial_\tau,\Nx]p,\partial_\tau u).
$$
Here and below, we denote by $[D_1,D_2]$ the commutator of two differential operators $D_1$ and $D_2$, i.e. $[D_1,D_2]u:=D_1(D_2u)-D_2(D_1)u$.
Let us first estimate the term with the commutator using integration by parts and the fact that $u$ is divergence free:
\begin{multline}
([\partial_\tau,\Nx]p,\partial_\tau u)=\sum_j(\sum_i\tau_i\partial_{x_i}\partial_{x_j}p-
\partial_{x_j}(\tau_i\partial_{x_i}p),\sum_i\tau_i\partial_{x_i}u_j)=\\=
\sum_j(\sum_i\partial_{x_j}\tau_i\partial_{x_i}p,\sum_i\tau_i\partial_{x_i}u_j)=
-\sum_j\(\sum_k\partial_{x_k}(\tau_k\sum_i\partial_{x_j}\tau_i\partial_{x_i}p),u_j\)=\\=
-\(\sum_{i,j,k}\partial_{x_k}\tau_k\partial_{x_j}\tau_i\partial_{x_i}p,u_j\)-\(\sum_{i,j,k}\tau_k\partial_{x_j,x_k}\tau_i\partial_{x_i}p,u_j\)
-\\-\(\sum_{i,j,k}\tau_k\partial_{x_j}\tau_i\partial_{x_i}(\tau_k\partial_{x_k}p),u_j\)
+\(\sum_{i,j,k}\partial_{x_i}\tau_k\partial_{x_j}\tau_i\partial_{x_i}p,u_j\)=\\=
(A(x)\Nx(\partial_\tau p),u)+(B(x)\Nx p, u)
\end{multline}
for some smooth (and therefore bounded matrices $A$ and $B$). The remaining term is analogous, but even simpler
$$
(\Nx(\partial_\tau p),\partial_\tau u)=-(\partial_\tau p,[\partial_\tau,\divv]u)=(A_1(x)\Nx(\partial_\tau p),u)+(B_1(x)\Nx p,u)
$$
Thus, we have proved the following key relation:
\begin{equation}\label{1.8}
(\Nx p,\partial_\tau^*\partial_\tau u)=(\widetilde A(x)\Nx (\partial_\tau p),u)+(\widetilde B(x)\Nx p,u)
\end{equation}
for some smooth matrices $\widetilde A(x)$ and $\widetilde B(x)$. We now turn to the term containing the Laplacian:
\begin{multline}
(\Dx u,\partial_\tau^*\partial_\tau u)=(\partial_\tau\Dx u,\partial_\tau u)=(\Dx(\partial_\tau u),\partial_\tau u)+\\+([\partial_\tau ,\Dx]u,\partial_\tau u)=-\|\Nx(\partial_\tau u)\|^2_{L^2}+([\partial_\tau ,\Dx]u,\partial_\tau u).
\end{multline}
It only remains to estimate the term containing commutator. It is a sum of second and first order differential operators, so integrating by parts again in the second order part, we arrive at
$$
|([\partial_\tau ,\Dx]u,\partial_\tau u)|\le \beta\|\Nx(\partial_\tau u)\|^2_{L^2}+C_\beta\|\Nx u\|^2_{L^2}\le \beta\|\Nx(\partial_\tau u)\|^2_{L^2}+C_\beta\|g\|^2_{L^2}
$$
and, therefore, we end up with the estimate
\begin{equation}\label{1.9}
-(\Dx u,\partial_\tau^*\partial_\tau u)\ge\frac12\|\Nx(\partial_\tau u)\|^2_{L^2}-C\|g\|^2_{L^2}.
\end{equation}
The non-linear term is estimated in a standard way:
$$
(f(u),\partial_\tau^*\partial_\tau u)=(\partial_\tau f(u),\partial_\tau u)=(f'(u)\partial_\tau u,\partial_\tau u)\ge \kappa(|u|^{r-1},|\partial_\tau u|^2)
$$
and combining the obtained estimates together, we arrive at
\begin{equation}\label{1.10}
\|\Nx(\partial_\tau u)\|^2_{L^2}+(|u|^{r-1},|\partial_\tau u|^2)\le C(1+\|g\|^2_{L^2})+C|(\widetilde A(x)\Nx(\partial_\tau p),u)|.
\end{equation}
In order to estimate the term with pressure in the right-hand side, we differentiate equation \eqref{1.BF} along $\tau$ and write the obtained expression in the following form:
\begin{multline}\label{1.11}
\Dx(\partial_\tau u)-\Nx(\partial_\tau p)=\{\partial_\tau f(u)\}+\{\partial_\tau g\}+\\+\{[\Dx,\partial_\tau] u+\{[\Nx,\partial_\tau]p\}= : h_1+h_2+h_3,\ \\ \divv (\partial_\tau u)=[\divv,\partial_\tau] u:=H,\ \ \partial_\tau u\big|_{\partial\Omega}=0.
\end{multline}
From estimate \eqref{1.en}, we know that
$$
\|h_3\|_{L^q}\le C\|g\|_{L^q},
$$
where we have used that $[\partial_\tau,\Dx]$ and $[\partial_\tau,\divv]$ are second and first order operators respectively. Also $\|H\|_{W^{1,q}}\le C\|g\|_{L^q}$. We now decompose
$$
\partial_\tau u=v_1+v_2+v_3,\ \ \partial_\tau p=p_1+p_2+p_3,
$$
where
\begin{equation}\label{1.12}
\Dx(v_i)-\Nx p_i=h_i,\ \ \divv v_i=H_i,\ \ v_i\big|_{\partial\Omega}=0,
\end{equation}
where $H_3=H$ and $H_1=H_2=0$.
Then, due to the $L^q$ and $H^{-1}$ regularity estimates for the Stokes operator, see e.g. \cite{Galdi}, we have
$$
\|p_2\|_{L^2}\le C\|g\|_{L^2},\ \ \|\Nx p_3\|_{L^q}\le C\|g\|_{L^q},\ \ \|\Nx p_1\|_{L^q}\le C\|\partial_\tau f(u)\|_{L^q}.
$$
Inserting these estimates to \eqref{1.10}, we arrive at
\begin{multline}\label{1.13}
\|\Nx(\partial_\tau u)\|^2_{L^2}+(|u|^{r-1},|\partial_\tau u|^2)\le C(1+\|g\|^2_{L^2})+\\+C\|u\|_{L^{r+1}}\|\partial_\tau f(u)\|_{L^q}\le C(1+\|g\|^2_{L^2})+C\|\partial_\tau f(u)\|_{L^q}^q.
\end{multline}
It only remains to estimate the last term via the H\"older inequality:
\begin{multline}
\|\partial_\tau f(u)\|_{L^q}^q\le (|u|^{q(r-1)},|\partial_\tau u|^q)=
\(\(|u|^{(r-1)/2}|\partial_\tau u|\)^{q},|u|^{q(r-1)/2}\)\\\le
 (|u|^{r-1},|\partial_\tau u|^2)^{\frac q2}\(|u|^{r+1},1\)^{1-\frac q2}\le
 \beta(|u|^{r-1},|\partial_\tau u|^2)+C_\beta\|u\|_{L^{r+1}}^{r+1},
\end{multline}
where we have used that $1-\frac q2=1-\frac{r+1}{2r}=\frac{r-1}{2r}$ and $q(r-1)\frac{2r}{r-1}=2(r+1)$. Inserting the obtained estimate to the right-hand side of \eqref{1.13}, we end up with the desired estimate for tangential derivatives:
\begin{equation}\label{1.14}
\|\Nx(\partial_\tau u)\|^2_{L^2}+(|u|^{r-1},|\partial_\tau u|^2)\le C(1+\|g\|^2_{L^2}).
\end{equation}
{\it Step 3. Interpolation and regularity in normal directions.} Let $U:=\Nx u$. Then, estimates and \eqref{1.en} and \eqref{1.14} give us
\begin{equation}\label{1.U}
\|U\|_{W^{1,q}}+\|\partial_\tau U\|_{L^2}\le C(1+\|g\|_{L^2}).
\end{equation}
Recall that we are now doing estimates in a small neighbourhood of the boundary only (inside of the domain $\Omega$ we already have better control of the $H^2$-norm of $u$ from the proved interior estimate). Thus, using the partition of unity, we may fix  a small neighbourhood of containing a piece of the boundary and the straighten it by an appropriate diffeomorphism in such a way that in new coordinates $y=(y_1,y_2,y_3)$. The direction $y_1$ corresponds to the normal direction $\vec n$ and the variables $y_2$, and $y_3$  correspond to the tangent directions. Then, estimate \eqref{1.U} reads
\begin{equation}
\|\partial_{y_1}U\|_{L^q([0,1]^3)}+\|\partial_{y_2}U\|_{L^2([0,1]^3)}+\|\partial_{y_3}U\|_{L^2([0,1]^3)}\le C(1+\|g\|_{L^2}).
\end{equation}
After that we may use the interpolation for anisotropic Sobolev spaces:
\begin{multline}
W^{1,q}((0,1),L^q(0,1)^2)\cap L^2((0,1),W^{1,2}(0,1)^2)\subset\\\subset W^{\theta,s}((0,1),W^{1-\theta,s}(0,1)^2)\subset L^{3q}((0,1),L^{3q}(0,1)^2),
\end{multline}
where
 $$
 \frac1s=\frac\theta q+\frac{1-\theta}2,\ \frac1{3q}=\frac1s-\frac\theta1=\frac1s-\frac{1-\theta}2,
 $$
 so $\theta=\frac13$, see e.g. \cite{Rak} or \cite{BIN}.  Thus, reminding that $U=\Nx u$,we end up with the desired partial regularity estimate
 \begin{equation}\label{1.15}
 \|\Nx u\|_{L^{3q}}\le C\|u\|_{W^{2,q}}^{1/3}\|\Nx(\partial_\tau u)\|^{2/3}_{L^2}\le  C(1+\|g\|_{L^2}).
 \end{equation}
 Indeed, inside of the domain $\Omega$, we have better estimate with the exponent $6$ instead of $3q$ due to the proved interior regularity and estimate \eqref{1.15} in a small neighbourhood of the boundary follows from the interpolation mentioned above. Thus, estimate \eqref{1.3q} is proved and we only need to check \eqref{1.ff}. To this end, we first note that, due to the interior estimate \eqref{1.7}, our assumptions on $f$ and the Sobolev embedding $H^1\subset L^6$, we have
 \begin{multline}\label{1.fff}
 \|f(u)\|_{L^{3q}(\Omega_\eb)}^q\le C(1+\|u\|^{r+1}_{L^{3(r+1)}(\Omega_\eb)})\le\\\le C_1(1+\|u\|_{L^{r+1}(\Omega_\eb)}^{r+1}+\|\Nx (u^{(r+1)/2})\|^2_{L^2(\Omega)})\le\\\le C_2(1+\|g\|^q_{L^q(\Omega)}+(\varphi f'(u)\Nx u,\Nx u))\le C_3(1+\|g\|^2_{L^2(\Omega)}).
 \end{multline}
 Thus, we only need to look at a small neighbourhood near the boundary. Using the refined Sobolev  estimate
 $$
 \|v\|_{L^6([0,1]^3)}^3\le C\!\(\|v\|_{L^2([0,1]^3)}^3\!+\!\|\partial_{y_1}v\|_{L^2([0,1]^3)}\|\partial_{y_2}v\|_{L^2([0,1]^3)}\|\partial_{y_3}v\|_{L^2([0,1]^3)}\!\)
 $$
 which holds for every $v\in H^1_0((0,1)^3)$ and taking $v=|u|^{(r+1)/2}$, analogously to \eqref{1.fff}, we get
 \begin{multline}
 \|f(u)\|^q_{L^{3q}(\Omega)}\le C\(1+\|g\|^q_{L^q(\Omega)}+\right.\\ +\left.
 (f'(u)\partial_n u,\partial_n u)^{1/3}(f'(u)\partial_{\tau_1} u,\partial_{\tau_1} u)^{1/3}(f'(u)\partial_{\tau_2} u,\partial_{\tau_2} u)^{1/3}\).
\end{multline}
 Combining this estimate with \eqref{1.14}, we end up with the desired estimate \eqref{1.ff} and finish the proof of the theorem.
\end{proof}
We are now ready to complete the maximal $L^2$-regularity estimate for the solutions of equation \eqref{1.BF}.
\begin{corollary}\label{Cor1.h2} Let $u$ be a sufficiently smooth solution of equation \eqref{1.BF}. Then the following estimate holds:
\begin{equation}\label{1.fest}
\|u\|_{H^2}^2+\|\Nx p\|^2_{L^2}+\|f(u)\|^2_{L^2}\le C(1+\|g\|_{L^q})^{q}(1+\|g\|_{L^2})^s ,
\end{equation}
where the constant $C$ is independent of $u$ and $g$ and $s=\max\{2,\frac{4(2-q)}{3q-2}\}$.
\end{corollary}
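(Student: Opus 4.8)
The plan is to reduce the whole statement to a single estimate for $\|f(u)\|_{L^2}$ and then invoke the linear theory. Writing \eqref{1.BF} as the Stokes system $\Dx u-\Nx p=g+f(u)$ and applying maximal $L^2$-regularity for the Stokes operator gives $\|u\|_{H^2}+\|\Nx p\|_{L^2}\le C(\|g\|_{L^2}+\|f(u)\|_{L^2})$, so after squaring the entire left-hand side of \eqref{1.fest} is dominated by $C(\|g\|_{L^2}^2+\|f(u)\|_{L^2}^2)$. Since $(1+\|g\|_{L^q})^q(1+\|g\|_{L^2})^s\ge(1+\|g\|_{L^2})^2\ge\|g\|_{L^2}^2$ as soon as $s\ge2$, this step already produces the lower branch $s=2$ and reduces the corollary to proving $\|f(u)\|_{L^2}^2\le C(1+\|g\|_{L^q})^q(1+\|g\|_{L^2})^s$.

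Next I would estimate $\|f(u)\|_{L^2}$ by interpolating between the two bounds already available. Proposition \ref{Prop1.en} gives $\|f(u)\|_{L^q}\le C\|g\|_{L^q}$, and Theorem \ref{Th1.main} gives the conditional bound \eqref{1.ff} for $\|f(u)\|_{L^{3q}}$. As $q<2<3q$, Hölder yields $\|f(u)\|_{L^2}\le\|f(u)\|_{L^q}^{\theta}\|f(u)\|_{L^{3q}}^{1-\theta}$ with $\theta=(3q-2)/4$; the low-integrability factor is what brings in the $(1+\|g\|_{L^q})^q$ dependence, so the real task becomes controlling the quantity $(f'(u)\Nx u,\Nx u)$ that appears in \eqref{1.ff} by the correct power of $\|g\|_{L^2}$.

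That quantity is the crux. Decomposing $\Nx u$ near $\partial\Omega$ into tangential and normal parts, its interior contribution is bounded by $C(1+\|g\|_{L^2}^2)$ through the interior estimate \eqref{1.7} and its tangential contribution through \eqref{1.14}; what survives is the genuinely dangerous boundary-layer term $(f'(u)\partial_n u,\partial_n u)$, where the Dirichlet condition blocks a direct higher-order bound. To handle it I would use $\|\Nx u\|_{L^{3q}}\le C(1+\|g\|_{L^2})$ from \eqref{1.3q} together with the energy bound $\|\Nx u\|_{L^2}^2\le C\|g\|_{L^q}^q$, interpolating $\partial_n u$ between $L^2$ and $L^{3q}$. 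The decisive point is that by the lower bound in \eqref{1.f} the weight $|u|^{r-1}$ is comparable to $|f(u)|^{(r-1)/r}$; choosing the Hölder exponent so that $|u|^{r-1}$ is measured in $L^{3qr/(r-1)}$ turns the normal term into a factor proportional to $\|f(u)\|_{L^{3q}}^{(r-1)/r}$ on the right-hand side of \eqref{1.ff}. Since $(r-1)/r<q$, this factor is subcritical against the $\|f(u)\|_{L^{3q}}^q$ on the left, so a Young inequality absorbs it and closes the estimate for $\|f(u)\|_{L^{3q}}$.

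The principal obstacle, and the step demanding the most care, is the exponent bookkeeping after this absorption: one must follow precisely how the powers of $\|g\|_{L^q}$ and $\|g\|_{L^2}$ accumulate through the interpolation of $\partial_n u$ and then through the outer Hölder splitting of $\|f(u)\|_{L^2}$, and verify that they collapse into the single power $s=\max\{2,4(2-q)/(3q-2)\}$. The maximum encodes the competition between the $\|g\|_{L^2}^2$ inherited from the Stokes step and the nonlinear boundary-layer contribution; showing that the latter never produces an exponent exceeding the stated $s$ is exactly where the sharpness of the partial regularity estimates of Theorem \ref{Th1.main} is indispensable, and where a naive use of the embedding $W^{1,3q}\subset L^\infty$ would instead yield a far larger, $r$-dependent power.
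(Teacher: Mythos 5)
Your opening reductions coincide with the paper's: the Stokes maximal $L^2$-regularity step and the interpolation $\|f(u)\|_{L^2}\le\|f(u)\|_{L^q}^{(3q-2)/4}\|f(u)\|_{L^{3q}}^{3(2-q)/4}$ are exactly \eqref{1.endref} and \eqref{1.fl2}. The gap is in your third step, the control of $(f'(u)\Nx u,\Nx u)$, which you yourself flag as "the crux" and then leave unverified — and the specific scheme you commit to fails to reach the stated exponent. With $q=\frac{r+1}{r}$, your choice of measuring $|u|^{r-1}$ in $L^{3qr/(r-1)}$ forces $\partial_n u$ into $L^{3(r+1)/(r+2)}$, so the interpolation between $\|\Nx u\|_{L^2}^2\le C\|g\|_{L^q}^q$ and $\|\Nx u\|_{L^{3q}}\le C(1+\|g\|_{L^2})$ comes with parameter $\lambda=\frac{r-1}{r+3}$. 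Feeding the resulting bound on $(f'(u)\partial_n u,\partial_n u)^{1/3}$ into the triple-product estimate preceding \eqref{1.ff} (the tangential factors contributing $(1+\|g\|_{L^2})^{4/3}$ via \eqref{1.14}) and absorbing $\|f(u)\|_{L^{3q}}^{(r-1)/(3r)}$ by Young with conjugate exponent $\frac{3(r+1)}{2(r+2)}$, one finds that $\|f(u)\|_{L^{3q}}^q$ carries the power $\frac{(3r+5)(r+1)}{(r+2)(r+3)}$ of $(1+\|g\|_{L^2})$; after the outer interpolation this is multiplied by $\frac{3(2-q)}{2q}=\frac{3(r-1)}{2(r+1)}$, giving the power $\frac{3(3r+5)(r-1)}{2(r+2)(r+3)}$ in the bound for $\|f(u)\|^2_{L^2}$. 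The ratio of this to the paper's $\frac{4(2-q)}{3q-2}=\frac{4(r-1)}{r+3}$ equals $\frac{3(3r+5)}{8(r+2)}>1$ for all $r>1$, and already at $r=5$ your power is $\frac{15}{7}>2=s$; so for $r\ge5$ (in fact for all $r\gtrsim4.53$) your route strictly overshoots $s=\max\bigl\{2,\frac{4(2-q)}{3q-2}\bigr\}$ and the corollary as stated does not follow.

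The missing idea is that no direct bound on the normal-direction term is needed at all: the paper closes the estimate by a circular absorption. From \eqref{1.fl2} and Young one first gets, for arbitrary $\nu>0$,
\begin{equation*}
\|f(u)\|^2_{L^2}\le C(1+\|g\|^2_{L^2})+C_\nu(1+\|g\|_{L^q})^q(1+\|g\|_{L^2})^{\frac{4(2-q)}{3q-2}}+\nu\,(f'(u)\Nx u,\Nx u),
\end{equation*}
and the Stokes step transfers the same right-hand side to $\|u\|^2_{H^2}+\|\Nx p\|^2_{L^2}$. Then one multiplies \eqref{1.BF} by $\Dx u$: the Dirichlet pressure term does not vanish, but it does not need to — Cauchy--Schwarz gives $\|\Dx u\|^2_{L^2}+(f'(u)\Nx u,\Nx u)\le(\|g\|_{L^2}+\|\Nx p\|_{L^2})\|\Dx u\|_{L^2}$, and inserting the bound on $\|\Nx p\|^2_{L^2}$ and fixing $\nu$ small absorbs $\nu(f'(u)\Nx u,\Nx u)$ into the left-hand side, yielding $(f'(u)\Nx u,\Nx u)\le C\|g\|^2_{L^2}+C(1+\|g\|_{L^q})^q(1+\|g\|_{L^2})^{\frac{4(2-q)}{3q-2}}$ with exactly the claimed exponent. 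This is why \eqref{1.ff} was stated as a \emph{conditional} estimate with $(f'(u)\Nx u,\Nx u)$ raised to the power $\frac13$: it is designed for self-consistent absorption, not for the direct interpolation of $\partial_n u$ that your proposal attempts.
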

\begin{proof} Using estimates \eqref{1.en}, \eqref{1.ff} and the interpolation inequality, we infer that
\begin{multline}\label{1.fl2}
\|f(u)\|^2_{L^2}\le\|f(u)\|_{L^q}^{\frac{3q-2}2}\|f(u)\|_{L^{3q}}^{\frac{3(2-q)}2}\le \\\le C(1+\|g\|_{L^q})^{\frac{3q-2}2}\(1+\|g\|^q_{L^q}+(1+\|g\|_{L^2}^2)^{\frac23}(f'(u)\Nx u,\Nx u)^{\frac13}\)^{\frac{3(2-q)}{2q}}.
\end{multline}
Since $1<q<2$, using the Young inequality, we may rewrite the last inequality in the form
\begin{multline}
\|f(u)\|^2_{L^2}\le C_1(1+\|g\|_{L^q})^2 +\\+(1+\|g\|_{L^q})^{\frac{3q-2}2}
(1+\|g\|_{L^2})^{\frac{2(2-q)}q}(f'(u)\Nx u,\Nx u)^{\frac{2-q}{2q}}\le\\\le C(1+\|g\|^2_{L^2})+
C_\nu(1+\|g\|_{L^q})^q(1+\|g\|_{L^2})^{\frac{4(2-q)}{3q-2}}+\nu(f'(u)\Nx u,\Nx u),
\end{multline}
where $\nu>0$ is arbitrary. Treating now the term $f(u)$ in \eqref{1.BF} as external forces and applying the maximal $L^2$-regularity estimate to the linear Stokes equation, we arrive at
\begin{multline}\label{1.endref}
\|u\|^2_{H^2}+\|\Nx p\|^2_{L^2}\le C\|g\|^2_{L^2}+\\+ C_\nu(1+\|g\|_{L^q})^q(1+\|g\|_{L^2})^{\frac{4(2-q)}{3q-2}}+\nu(f'(u)\Nx u,\Nx u).
\end{multline}
Moreover, from the equation \eqref{1.BF} we have
\begin{multline}\label{1.endf}
\|f(u)\|^2_{L^2}\le \|u\|^2_{H^2}+\|\Nx p\|^2_{L^2}+\|g\|^2_{L^2}\le C\|g\|^2_{L^2}+\\+  C_\nu(1+\|g\|_{L^q})^q(1+\|g\|_{L^2})^{\frac{4(2-q)}{3q-2}}+\nu(f'(u)\Nx u,\Nx u).
\end{multline}
Finally, in order to estimate the last term in the right-hand side, we multiply equation \eqref{1.BF} by $\Dx u$ and integrate over $x\in\Omega$. This gives
\begin{multline}
\|\Dx u\|^2_{L^2}+(f'(u)\Nx u,\Nx u)\le(\|g\|_{L^2}+\|\Nx p\|_{L^2})\|\Dx u\|_{L^2}\le\\\le \|g\|^2_{L^2}+\|\Dx u\|^2_{L^2}+\|\Nx p\|^2_{L^2}\le \|\Dx u\|^2_{L^2}+\\+C\|g\|^2_{L^2}+C_\nu(1+\|g\|_{L^q})^q(1+\|g\|_{L^2})^{\frac{4(2-q)}{3q-2}}+\nu(f'(u)\Nx u,\Nx u).
\end{multline}
Fixing $\nu>0$ small enough, we deduce that
$$
(f'(u)\Nx u,\Nx u)\le C\|g\|^2_{L^2}+C(1+\|g\|_{L^q})^q(1+\|g\|_{L^2})^{\frac{4(2-q)}{3q-2}},
$$
which together with \eqref{1.endref} and \eqref{1.endf} finish the proof of the corollary.
\end{proof}
We now briefly discuss the existence and uniqueness of a solution for \eqref{1.BF}. We start with weak solutions. To this end, we introduce the standard notation for the Navier-Stokes equations theory, namely, space $\Cal D_\sigma(\Omega)$ of divergence free $C^\infty$-smooth vector fields in $\Omega$ vanishing near the boundary as well as the spaces $\Cal H$ and $\Cal V$ which are the closure of $\Cal D_\sigma(\Omega)$ in $[L^2(\Omega)]^3$ and $[H^1(\Omega)]^3$ respectively. The space $\Cal V^{-1}$ is defined as a dual space to $\Cal V$ with respect to the duality generated by the standard inner product in $\Cal H$. Then, for every external force $g\in\Cal V^{-1}+L^q(\Omega)$, we say that $u$ is a weak solution of \eqref{1.BF} if $u\in\Cal V\cap L^{r+1}(\Omega)$ and satisfies equation \eqref{1.BF} in the sense of distributions, i.e.
\begin{equation}
(\Nx u,\Nx\varphi)+(f(u),\varphi)+(g,\varphi)=0
\end{equation}
for all test functions $\varphi\in\Cal D_\sigma(\Omega)$. As usual, see e.g. \cite{Galdi}, the pressure $p$ can be restored using $L^q$ and $H^{-1}$ regularity estimates for the linear Stokes operator, so we may also claim that $p\in L^2(\Omega)+W^{1,q}(\Omega)$ for any weak solution $u$.
\par
The existence of a weak solution is straightforward and can be done using, for instance, the Galerkin approximation method, see e,g, \cite{tem,Galdi}. The uniqueness also straightforward due to our monotonicity assumption on $f$ and the density of $\Cal D_\sigma(\Omega)$ in $\Cal V\cap L^{r+1}$ proved in \cite{Galdi} (see also \cite{GalZ} for an alternative proof). So, we discuss only the further smoothness of weak solutions.
\begin{corollary}\label{Cor1.sm} Let $g\in L^2(\Omega)$. Then, the unique weak solution $u$ of problem \eqref{1.BF} belongs to $H^2(\Omega)$, the corresponding pressure $p\in H^1(\Omega)$ and all of the estimates stated above hold.
\end{corollary}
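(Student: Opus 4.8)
Since the estimates \eqref{1.en}, \eqref{1.3q}, \eqref{1.ff} and \eqref{1.fest} are already established for sufficiently smooth solutions, with constants depending only on $\kappa$, $C$, $r$ and on the norms of the data, the whole content of the corollary is to transfer them to the weak solution. The plan is to do this by a regularization of the nonlinearity together with a passage to the limit. First I would record that $g\in L^2(\Om)\subset L^q(\Om)$ on the bounded domain $\Om$, so all of the above estimates are in principle available, and that the existence and uniqueness of the weak solution $u\in\V\cap L^{r+1}(\Om)$ has just been proved; the only remaining point is that this particular $u$ is smooth enough for the estimates to apply to it.

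To produce solutions to which Proposition \ref{Prop1.en}, Theorem \ref{Th1.main} and Corollary \ref{Cor1.h2} directly apply, I would mollify $f$ to a sequence $f_n\in C^\infty(\R^3,\R^3)$ and smooth the data to $g_n\to g$ in $L^2(\Om)$. The crucial observation is that the one-sided bounds \eqref{1.f} are pointwise, and hence survive averaging: $f_n$ satisfies \eqref{1.f} with constants $\kappa'$, $C'$ that are \emph{independent of $n$} (and tend to $\kappa$, $C$). For fixed $n$ the corresponding weak solution $u_n$ is smooth: the monotonicity $f_n'\ge 0$ gives the monotone term a favorable sign, so the standard boundary regularity for the Stokes system via Nirenberg's tangential difference quotients (see \cite{Galdi}) yields the tangential $H^2$-control, the equation itself then recovers the normal derivatives, and since $u_n\in H^2(\Om)\subset C(\bar\Om)$ in dimension three one bootstraps using $f_n\in C^\infty$ and the maximal $L^q$-regularity of the Stokes operator to reach $u_n\in C^\infty$. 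In particular $u_n$ is a sufficiently smooth solution, so the three earlier results apply to it and produce bounds for $u_n$ in $H^2\cap W^{2,q}\cap W^{1,3q}\cap L^{r+1}$, for $p_n$ in $H^1\cap W^{1,q}$, and for $f_n(u_n)$ in $L^2$, all uniform in $n$, since the constants depend only on $\kappa'$, $C'$, $r$ and on the (bounded) quantities $\|g_n\|_{L^2}$, $\|g_n\|_{L^q}$.

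Next I would pass to the limit $n\to\infty$. The uniform $H^2$-bound gives, along a subsequence, $u_n\rightharpoonup u_\infty$ weakly in $H^2(\Om)$ and, by Rellich, strongly in $H^1(\Om)$ and in $C(\bar\Om)$; the uniform $L^{r+1}$-bound upgrades this to strong convergence in $L^{r+1}(\Om)$, and the remaining bounds give weak limits for $p_n$ and $f_n(u_n)$. Using $f_n\to f$ locally uniformly together with $u_n\to u_\infty$ in $C(\bar\Om)$, one gets $f_n(u_n)\to f(u_\infty)$ uniformly, so passing to the limit in $(\Nx u_n,\Nx\varphi)+(f_n(u_n),\varphi)+(g_n,\varphi)=0$ for test functions $\varphi\in\Cal D_\sigma(\Om)$ shows that $u_\infty$ is a weak solution of \eqref{1.BF} with data $g$; by the already established uniqueness $u_\infty=u$. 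Finally the four estimates transfer to $u$: the left-hand norms are weakly lower semicontinuous, the right-hand sides converge because $g_n\to g$, and in \eqref{1.ff} the dissipative term passes to the limit since $\Nx u_n\to\Nx u$ strongly in $L^2$ and $f_n'(u_n)\to f'(u)$ uniformly, so that $(f_n'(u_n)\Nx u_n,\Nx u_n)\to(f'(u)\Nx u,\Nx u)$. This yields $u\in H^2(\Om)$, $p\in H^1(\Om)$ and all of the stated estimates.

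I expect the main obstacle to be the second step: obtaining \emph{smooth} approximate solutions while keeping the estimates \emph{uniform} in $n$. The tension is that bootstrapping to smoothness favors a regular nonlinearity, whereas uniformity of the constants in \eqref{1.f} is precisely what must be preserved under regularization; mollification resolves this exactly because the bounds in \eqref{1.f} are one-sided and pointwise and hence pass through the averaging with $n$-independent constants. A secondary technical point is the limit in the nonlinear quantities, in particular the dissipative term $(f_n'(u_n)\Nx u_n,\Nx u_n)$, for which the strong $L^2$-convergence of $\Nx u_n$ coming from the compact embedding $H^2\subset H^1$ is essential. As an alternative to the whole scheme one could rerun the proofs of Theorem \ref{Th1.main} and Corollary \ref{Cor1.h2} directly on $u$ with every differentiation replaced by a difference quotient, but the regularization route above is cleaner.
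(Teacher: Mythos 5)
There is a genuine gap, and it sits exactly where you suspected trouble but misdiagnosed it. Your scheme stands or falls with the claim that, for each fixed $n$, the weak solution $u_n$ of the mollified problem is smooth ``by standard boundary regularity for the Stokes system via Nirenberg's tangential difference quotients''. Mollifying $f$ does not lower its growth: $f_n$ still grows like $|u|^r$ with $r$ arbitrarily large, so the fixed-$n$ regularity question is exactly as hard as the original one (indeed $f$ is already $C^1$, so smoothing it buys nothing essential; the difficulty was never smoothness of $f$). The favourable sign of the monotone term in the difference-quotient identity is not the obstruction; the pressure is. The test function $\partial_\tau^{-h}\partial_\tau^h u_n$ is not divergence free, so it is inadmissible in the weak formulation and the pressure term re-enters; controlling it requires the full machinery of Step 2 of Theorem \ref{Th1.main} --- the decomposition of $\partial_\tau p$ and, in particular, the Stokes estimate $\|\Nx p_1\|_{L^q}\le C\|\partial_\tau f(u)\|_{L^q}$, where $\|\partial_\tau f(u)\|_{L^q}$ is a quantity controlled only by the very bound being proved. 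For $r>7/3$ and Dirichlet conditions no standard bootstrap exists (as the introduction of the paper explains): from $f(u)\in L^q$ with $q=(r+1)/r$ close to $1$, maximal regularity gives only $u\in W^{2,q}$, which does not improve the integrability of $f(u)$. So the word ``standard'' hides precisely the circular difficulty that Corollary \ref{Cor1.sm} is meant to resolve: the estimates of Theorem \ref{Th1.main} and Corollary \ref{Cor1.h2} are established only for solutions already known to be sufficiently smooth, and your proposal offers no mechanism to enter that class. The tension you identified is not uniformity of the constants in \eqref{1.f} under mollification (that part of your argument is correct, and also inessential) but fixed-$n$ regularity, which mollification does not touch.

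The paper circumvents this chicken-and-egg problem by a genuinely different device: instead of proving regularity of the weak solution, it \emph{constructs} a smooth solution by Leray--Schauder continuation in $\eb\in[0,1]$ for $\Dx u-\Nx p-\eb f(u)=g$, rewritten as the fixed-point equation $u=\eb A^{-1}\Pi f(u)+A^{-1}\Pi g$ posed in $H^3(\Omega)$ with $g\in H^1$. Solutions of this equation are \emph{a priori} in $H^3$ (with $p\in H^2$), so the nonlinear localization estimates are legitimately applicable to them and furnish the $\eb$-uniform $H^2$ bound (hence, by bootstrap, the $H^3$ bound) needed for the degree argument; uniqueness of the weak solution then identifies the constructed solution with $u$. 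Only the last step of the paper's proof --- approximating $g\in L^2$ by $g_n\in H^1$ and passing to the limit using that the $H^2$ bound depends only on $\|g\|_{L^2}$ --- coincides with your scheme, and indeed your limit passage (weak lower semicontinuity, compact embedding $H^2(\Omega)\subset\subset C(\bar\Omega)$, identification by uniqueness) is fine as written. Your closing alternative --- rerunning the proof of Theorem \ref{Th1.main} on the weak solution with derivatives replaced by difference quotients --- is the honest version of your second step, but to carry it out you would have to reproduce the entire pressure decomposition and the anisotropic interpolation of Step 3 at the difference-quotient level, which is exactly the delicate work the paper's degree-theoretic detour is designed to avoid.
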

\begin{proof} We first note that, due to the uniqueness of weak solutions, it is enough to construct a regular solution $u\in H^2(\Omega)$ for any $g\in L^2(\Omega)$ and this will automatically exclude the existence of weak non-smooth solutions. Since the nonlinear localization seems to not work on the level  of Galerkin approximations (at least we do not know how to obtain uniform estimates for higher norms), we need to use the alternative methods. For instance, we may use the continuation by the parameter methods based on the Leray-Schauder degree theory. Indeed, let us consider the family of equations of the form \eqref{1.BF} depending on a parameter $\eb\in[0,1]$:
\begin{equation}\label{1.BF1}
\Dx u-\Nx p-\eb f(u)=g,\ \ u\big|_{\Omega}=0.
\end{equation}
Using the Leray-Helmholtz projector $\Pi$ and the invertibility of the Stokes operator $A:=\Pi\Delta$, we may rewrite equation \eqref{1.BF1} in the equivalent form
\begin{equation}\label{1.BF2}
u=\eb A^{-1}\Pi f(u)+A^{-1}\Pi g.
\end{equation}
Let us assume that $g\in H^1$ and consider equation \eqref{1.BF2} as an equality in $H^3(\Omega)$. Then, due to the embedding $H^3(\Omega)\subset C^1(\Omega)$ and the $H^1\to H^3$ regularity of the solutions of the linear Stokes operator, see \cite{Galdi}, the right-hand side of \eqref{1.BF2} is a compact and continuous operator in $H^3(\Omega)$, so the Leray-Schauder degree theory works and we only need to obtain the uniform with respect to $\eb\in[0,1]$ estimates for the $H^3$-solutions of \eqref{1.BF2} or, which is the same, $H^3$-solutions of \eqref{1.BF1}.
\par
To get this estimate, we note that now we a priori know that the solution $u=u_\eb$ is in $H^3$ and the corresponding pressure $p_\eb\in H^2$ (by the regularity of the Leray-Helmholtz projector). Since all the above estimates related with nonlinear localizations obviously hold for such solutions, we may apply Corollary \ref{Cor1.h2} to equation \eqref{1.BF1}, which gives us the estimate of the $H^2$-norm of $u$. Moreover, it is not difficult to see that this estimate is unform with respect to $\eb$, so we end up with
$$
\|u_\eb\|_{H^2}\le Q(\|g\|_{L^2}),
$$
where the function $Q$ is independent of $\eb$. After that, using the embedding $H^2(\Omega)\subset C(\Omega)$, we get the control of the $H^1$-norm of $f(u)$ and, due to the regularity of the Stokes operator, we finally get the desired uniform a priori estimate in the $H^3$-norm. Thus, we have proved the existence of $H^3$-solution of problem \eqref{1.BF} under the extra condition that $g\in H^1$. Since the $H^2$-norm estimate of $u$ depends only on the $L^2$-norm of $u$, we may approximate a given external force $g\in L^2$ by a sequence $g_n\in H^1$, construct the associated solution $u_n$ and pass to the limit $n\to\infty$. This will give us the desired $H^2$-solution and finish the proof of the corollary.
\end{proof}
Let us now consider the stationary Navier-Stokes-Brinkman-Forchheimer equation:
\begin{equation}\label{1.BFS}
\Dx u-\Nx p-f(u)=g+(u,\Nx)u,\ \ \divv u=0,\ \ u\big|_{\partial\Omega}=0.
\end{equation}
Then, the weak solution of this problem is defined exactly as in the case of equation \eqref{1.BF}. Moreover, we have the energy estimate \eqref{1.enn} for such solutions due to the cancellation $((u,\Nx u),u)=0$. The existence of a weak solution can be obtained after that, e.g. by the Galerkin method. The next corollary tells us when every such solution is automatically smooth.
\begin{corollary} Let $g\in L^2(\Omega)$. Then any weak solution $u$ of problem \eqref{1.BFS} belongs to $H^2(\Omega)$ and the pair $(u,p)$ possesses the following estimate:
\begin{equation}\label{1.h2good}
\|u\|_{H^2}+\|p\|_{H^1}\le Q(\|g\|_{L^2}),
\end{equation}
where the function $Q$ is independent of $u$ and $g$.
\end{corollary}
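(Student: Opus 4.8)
The plan is to treat the inertial term as part of the data and reduce \eqref{1.BFS} to the Brinkman--Forchheimer equation \eqref{1.BF} already analysed in this section. Setting $\tilde g:=g+(u,\Nx)u$, a solution of \eqref{1.BFS} is precisely a solution of \eqref{1.BF} with right-hand side $\tilde g$; hence, as soon as I know that
\begin{equation}
\tilde g\in L^2(\Om)\qquad\text{with}\qquad \|\tilde g\|_{L^2}\le Q(\|g\|_{L^2}),
\end{equation}
the bound \eqref{1.h2good} is an immediate consequence of Corollary~\ref{Cor1.h2} and Corollary~\ref{Cor1.sm} applied with the force $\tilde g$ (note $L^2(\Om)\subset L^q(\Om)$, so $\|\tilde g\|_{L^q}$ is controlled as well). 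The starting point is the energy estimate \eqref{1.enn}, which remains valid for \eqref{1.BFS} thanks to the cancellation $((u,\Nx)u,u)=0$ and yields $\|u\|_{H^1}+\|u\|_{L^{r+1}}\le Q(\|g\|_{L^2})$; in particular $\|u\|_{L^6}\le Q(\|g\|_{L^2})$ by the Sobolev embedding. Thus the entire difficulty is to control the inertial term in $L^2$.

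The obstacle here is a circularity. To estimate the inertial term I would use $\|(u,\Nx)u\|_{L^2}\le\|u\|_{L^6}\|\Nx u\|_{L^3}$, but the only source of the bound $\Nx u\in L^3$ is the nonlinear-localization estimate \eqref{1.3q} of Theorem~\ref{Th1.main}, which is expressed through $\|\tilde g\|_{L^2}$ -- exactly the quantity I want to bound. I would break the loop by interpolation. Applying \eqref{1.3q} to the force $\tilde g$ gives $\|\Nx u\|_{L^{3q}}\le C(1+\|\tilde g\|_{L^2})$ with $3q=3(r+1)/r>3$, and since $\Nx u\in L^2$ by the energy estimate,
\begin{equation}
\|\Nx u\|_{L^3}\le\|\Nx u\|_{L^2}^{1-\theta}\|\Nx u\|_{L^{3q}}^{\theta},
\qquad \theta=\frac{q}{3q-2}\in(0,1).
\end{equation}
Consequently $\|(u,\Nx)u\|_{L^2}\le A\,\|\Nx u\|_{L^{3q}}^{\theta}$ with $A:=\|u\|_{L^6}\|\Nx u\|_{L^2}^{1-\theta}\le Q(\|g\|_{L^2})$, so that
\begin{equation}
\|\tilde g\|_{L^2}\le\|g\|_{L^2}+A\,[C(1+\|\tilde g\|_{L^2})]^{\theta}.
\end{equation}
Because $\theta<1$, Young's inequality absorbs the term $\|\tilde g\|_{L^2}^{\theta}$ into $\|\tilde g\|_{L^2}$ and produces the closed bound $\|\tilde g\|_{L^2}\le Q(\|g\|_{L^2})$. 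This is the conceptual crux of the whole argument.

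Once $\tilde g\in L^2$ is established with $\|\tilde g\|_{L^2}\le Q(\|g\|_{L^2})$, I feed it back into Corollary~\ref{Cor1.h2} to obtain $\|u\|_{H^2}^2+\|\Nx p\|_{L^2}^2\le Q(\|\tilde g\|_{L^2})\le Q(\|g\|_{L^2})$, and the zero-mean normalisation of the pressure together with the Poincar\'e inequality upgrades this to $\|p\|_{H^1}\le Q(\|g\|_{L^2})$, which is exactly \eqref{1.h2good}.

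The genuinely delicate point, which I expect to be the main technical burden, is that the localization estimates of Theorem~\ref{Th1.main} are derived for sufficiently smooth solutions, whereas the corollary asserts regularity of an \emph{arbitrary} weak solution; and, in contrast to \eqref{1.BF}, uniqueness of weak solutions of \eqref{1.BFS} is not available for large data, so I cannot simply reduce to a single constructed solution. I would handle this as in the proof of Corollary~\ref{Cor1.sm}, combining a continuation-by-parameter (Leray--Schauder) construction -- with the parameter attached to the inertial term, so that the approximating solutions are a priori regular enough to justify every integration by parts above -- with a direct implementation of the tangential step (Step~2 of Theorem~\ref{Th1.main}) via Nirenberg difference quotients, which makes the estimate legitimate already for $H^1$-regular weak solutions. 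Throughout this step the inertial term is carried as a subordinate perturbation and absorbed into the dissipative quantities $(\varphi|u|^{r-1},|\Nx u|^2)$ and $(\varphi,|\Dx u|^2)$ by Young's inequality, using the high integrability $u\in L^{r+1}$ and the embedding $H^1\subset L^6$; since every constant produced depends only on $\|g\|_{L^2}$, the estimates are uniform along the continuation and stable under the approximation, giving the $H^2$-regularity of any weak solution together with \eqref{1.h2good}.
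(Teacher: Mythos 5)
Your a priori bound is correct, but it closes the self-consistency loop differently from the paper. Both arguments rewrite \eqref{1.BFS} as \eqref{1.BF} with $\tilde g:=g+(u,\Nx)u$; the paper, however, closes at the level of the $H^2$-norm: it applies the full maximal-regularity estimate \eqref{1.fest} of Corollary~\ref{Cor1.h2} (with the refinement that the $L^q$-factor may be kept with $g$ rather than $\tilde g$, since it originates in the bound on $\|f(u)\|_{L^q}$, already controlled by the energy estimate for \eqref{1.BFS}), then estimates $\|\,|u|\cdot|\Nx u|\,\|_{L^2}\le C\|u\|_{H^1}^{3/2}\|u\|_{H^2}^{1/2}$ and absorbs, using that the exponent $s=\max\{2,\tfrac{4(2-q)}{3q-2}\}$ satisfies $s/2<2$. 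You instead close one level lower, at $\|\tilde g\|_{L^2}$, using only the partial regularity \eqref{1.3q} together with the interpolation $\|\Nx u\|_{L^3}\le\|\Nx u\|_{L^2}^{1-\theta}\|\Nx u\|_{L^{3q}}^{\theta}$, $\theta=q/(3q-2)<1$ (valid since $q>1$), and Young's inequality; your exponent bookkeeping is right, and once $\|\tilde g\|_{L^2}\le Q(\|g\|_{L^2})$ is known, Corollaries~\ref{Cor1.h2} and \ref{Cor1.sm} can indeed be invoked as black boxes (with $\|\tilde g\|_{L^q}\le C\|\tilde g\|_{L^2}$ on the bounded domain, no refinement of \eqref{1.fest} is needed). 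This part of your proposal is a legitimate, arguably cleaner, alternative.

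The gap is in the final step. You correctly diagnose that non-uniqueness of weak solutions of \eqref{1.BFS} blocks the reduction to a single constructed smooth solution, but your remedy --- running the localization estimates directly on a weak solution via Nirenberg difference quotients --- is asserted rather than executed, and the hard point is hidden exactly there: tangential difference quotients (flows of the fields $\tau$ along a curved boundary) do \emph{not} preserve the divergence-free constraint, so the pressure terms do not cancel, and the entire decomposition of $\partial_\tau p$ into the Stokes problems \eqref{1.12} would have to be rebuilt and justified at the regularity $u\in\Cal V\cap L^{r+1}\cap W^{2,q}$, where pairings such as $(f(u),\partial_\tau^*\partial_\tau u)$ are not even dual products ($f(u)\in L^q$ only). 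The paper avoids this programme altogether by a monotonization trick: for the \emph{fixed} weak solution $u$, add $Lu$ to both sides and consider the auxiliary problem \eqref{1.BFS1} with forcing $\bar g:=g-Lu\in L^2$; since $f(v)+Lv$ satisfies the structural assumptions, the auxiliary problem has a smooth solution $v\in H^2$ (by the a priori estimate plus the Leray--Schauder scheme of Corollary~\ref{Cor1.sm}), while $u$ remains a weak solution of the \emph{same} problem; an elementary weak-strong uniqueness computation for $w=u-v$ (monotonicity of $f+L\,\mathrm{id}$, the cancellation $((u,\Nx)w,w)=0$, and absorption of $((w,\Nx)v,w)$) then forces $u=v$ for $L$ large. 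One energy identity thus replaces your proposed re-derivation of Theorem~\ref{Th1.main} at weak regularity. (If you adopt this route, note that the threshold as stated in the paper, $L>C_2\|v\|_{H^1}^4$, requires care, since $\|v\|_{H^1}$ itself grows with $L$ through $\bar g$; for $r>3$ one obtains an $L$-independent threshold by absorbing the inertial cross-term into the monotonicity term $(|v|^{r-1},|w|^2)$, exactly as in the uniqueness argument of Proposition~\ref{Prop2.weak}.)
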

\begin{proof} We first derive estimate \eqref{1.h2good} assuming that $u$ is smooth enough, e.g. $u\in H^2$. To this end, we interpret equation \eqref{1.BFS} and \eqref{1.BF} with the right-hand side $\tilde g:=g+(u,\Nx)u$ and apply estimate \eqref{1.fest} to it. In addition, analyzing the proof of this estimate, we see that the term containing $\|\tilde g\|_{L^q}$ in the right-hand side of it comes from the corresponding estimate of $\|f(u)\|_{L^q}$ and this control  we already proved for \eqref{1.BFS}, so we may replace $\tilde g$ by $g$ in this part of the estimate and arrive at
\begin{multline}
\|u\|^2_{H^2}+\|\Nx p\|_{H^1}^2\le\\\le C(1+\|g\|_{L^q})^q(1+\|\tilde g\|_{L^2})^s\le
Q(\|g\|_{L^2})(1+\|\,|u|\cdot|\Nx u|\,\|_{L^2})^s.
\end{multline}
Using the appropriate interpolation inequality together with \eqref{1.enn}, we estimate
$$
\|\,|u|\cdot|\Nx u|\,\|_{L^2}\le \|u\|_{L^\infty}\|u\|_{H^1}\le C\|u\|_{H^1}^{3/2}\|u\|_{H^2}^{1/2}\le C(1+\|g\|_{L^2})^{3q/4})\|u\|_{H^2}^{1/2}
$$
and, therefore,
$$
\|u\|^2_{H^2}+\|\Nx p\|_{H^1}^2\le Q(\|g\|_{L^2})(1+\|u\|^{s/2}_{H^2}).
$$
Since $s=\max\{2,\frac{4(2-q)}{3q-2}\}<4$ for $1\le q\le2$, the last estimate implies \eqref{1.h2good}. Thus, the $H^2$ a priori bound for a sufficiently smooth solution of \eqref{1.BFS} is obtained. The existence of such a solution can be verified, e. g. using again the Leray-Schauder degree theory, see the proof of Corollary \ref{Cor1.sm}, and we only need to check that any weak solution is actually smooth.
\par
In contrast to Corollary \ref{Cor1.sm}, we do not expect the uniqueness of a weak solution for problem \eqref{1.BFS} due to the presence of the non-monotone inertial term. In order to overcome this problem, we add an artificial term $Lu$ with big $L$ in both sides of equation \eqref{1.BFS} and consider the auxiliary problem
\begin{equation}\label{1.BFS1}
\Dx v-\Nx p-f(v)-Lv-(v,\Nx v)=g-Lu:=\bar g,
\end{equation}
where $u$ is a fixed weak solution of \eqref{1.BFS}. Then, since $\bar g\in L^2$ and the new nonlinearity $f(v)+Lv$ satisfies all of the assumptions posed on $f$, this problem possesses a smooth solution $v\in H^2$. On the other hand, $u$ is still a weak solution of this problem and we only need to check that $u=v$. This will be true  if $L$ is large enough to compensate the non-monotonicity of the inertial term. Indeed, let $w=u-v$. Then, this function solves
$$
\Dx w-\Nx \bar p-[f(w+v)-f(v)]-[(u,\Nx)w-(w,\Nx)v]-Lw=0.
$$
 Multiplying this equation by $w$, integrating by $x\in\Omega$ and using the monotonicity of $f$ together with the cancellation $((u,\Nx)w,w)=0$, we arrive at
 \begin{multline}
 \|\Nx w\|^2+L\|w\|^2\le |(w,\Nx)v,w)|\le \|\Nx v\|_{L^2}\|w\|_{L^4}^2\le\\\le C_1\|v\|_{H^1}\|w\|^{1/2}_{L^2}\|w\|_{H^1}^{3/2}\le C_2\|v\|^4_{H^1}\|w\|^2_{L^2}+\|\Nx w\|^2_{L^2}
 \end{multline}
 and we see that the uniqueness holds if $L>C_2\|v\|^4_{H^1}$. This finishes the proof of the corollary.
\end{proof}
\section{The non-stationary case}\label{s2}
In this section, we turn to study the non-stationary Brinkmann-Forch\-hei\-mer-Navier-Stokes equation of the form:
\begin{multline}\label{2.BF}
\Dt u+(u,\Nx)u+\Nx p+f(u)=\Dx u+g,\\ \divv u=0,\ u\big|_{t=0}=u_0,\  u\big|_{\Omega}=0,
\end{multline}
where $u_0\in \Cal V$ and
\begin{equation}\label{2.g}
g\in L^2_{loc}([0,\infty),L^2(\Omega)).
\end{equation}
Our general plan to tackle this problem is similar to what we did in Section \ref{s1}. Namely, we first obtain the basic energy estimate (by multiplying the equation by $u$) and after that improve the regularity of a solution using the nonlinear localization technique. In the case where $g$ is regular enough in time, e.g. $g\in C^1_b(\R,L^2(\Omega))$, one can get the control of the $C_b(\R_+,L^2(\Omega))$-norm of $\Dt u$ by differentiating the equation in time and multiplying it by $\Dt u$, see \cite{KZ} for the details. This will reduce the problem to the autonomous one which is considered in the previous section.
\par
However, the regularity of \eqref{2.g} is not sufficient to proceed in such a way and we need to apply the nonlinear localization technique to the {\it non-stationary} equation \eqref{2.BF} in direct way. As we will see below, the interior estimates as well as regularity in tangential directions can be extended to the non-stationary case with some extra technicalities related with the inertial term. In contrast to this, the obtained regularity in time is insufficient to obtain reasonable analogue of estimate \eqref{1.fest} and get the regularity in normal direction. For this reason, we failed to get full $H^2$-maximal regularity in the non-stationary case and do not know whether or not it holds (even in a "simple" case of periodic boundary conditions), but the obtained results are enough to establish the well posedness of \eqref{2.BF} in the phase space $\Cal V$ under the extra assumption that the nonlinearity $f$ is {\it gradient}, i.e.
\begin{equation}\label{2.fgr}
f(u)=\nabla_u F(u),
\end{equation}
for some $F\in C^2(\R^3)$. Again, we do not know whether or not the problem \eqref{2.BF} is globally well-posed without this assumption although some of our estimates remain true without it. We also relax slightly assumption \eqref{1.f} in order to include non-monotone nonlinearities. Namely, we assume from now on that
\begin{equation}\label{2.f}
\kappa |u|^{r-1}-L\le f'(u)\le C(1+|u|^{r-1})
\end{equation}
for some positive constants $\kappa$, $C$ and $L$.
\par
We start with the non-stationary analogues of basic energy estimates.
\begin{proposition}\label{Prop2.en} Let the assumptions \eqref{2.f} and \eqref{2.g} hold and let  $u$ be a sufficiently smooth solution of \eqref{2.BF}. Then, the following estimate holds:
\begin{multline}\label{2.en}
\|u(t)\|^2_{\Cal H}+\|u(t)\|^q_{W^{2(1-1/q),q}}+\!\!\int_0^te^{-\alpha(t-s)}\(\|\Nx u(s)\|^2_{L^2}+\|u(s)\|^q_{W^{2,q}}+\right.\\\left.+\|\Dt u(s)\|^q_{L^q}+\|u(s)\|^{r+1}_{L^{r+1}}+\|f(u(s))\|_{L^q}^q+\|\Nx p(s)\|^q_{L^q}\)\,ds\le\\\le C\(\|u_0\|^2_{\Cal H}+\|u_0\|^q_{W^{2(1-1/q),q}}\)e^{-\alpha t}+C\(1+\int_0^te^{-\alpha(t-s)}\|g(s)\|_{L^q}^q\,ds\),
\end{multline}
where the positive constants $C=C_\alpha$ and $0<\alpha<\alpha_0$ can be chosen arbit\-rarily (where $\alpha_0$   is small enough). All constants  are independent of $t$ and~$u$.
\end{proposition}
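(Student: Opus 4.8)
The plan is to mirror the strategy of Section~\ref{s1}: first extract the coercive (dissipative) information by testing \eqref{2.BF} with $u$, and then upgrade to $L^q$-regularity by treating the Forchheimer and inertial terms as a right-hand side for the linear non-stationary Stokes operator, now with an exponential weight in time. \emph{Step 1: basic energy estimate.} I would multiply \eqref{2.BF} by $u$ and integrate over $\Om$. The inertial term disappears because $((u,\Nx)u,u)=0$ for divergence-free $u$ with $u|_{\partial\Om}=0$, and the pressure term disappears because $(\Nx p,u)=-(p,\divv u)=0$. Integrating the lower bound in \eqref{2.f} gives the coercivity $(f(u),u)\ge c\|u\|_{L^{r+1}}^{r+1}-C\|u\|_{L^2}^2-C$, whence
\[
\tfrac12\ddt\|u\|_{L^2}^2+\|\Nx u\|_{L^2}^2+c\|u\|_{L^{r+1}}^{r+1}\le C\|u\|_{L^2}^2+C+(g,u).
\]
Estimating $(g,u)\le C\|g\|_{L^q}^q+\tfrac c2\|u\|_{L^{r+1}}^{r+1}$ by Young, absorbing $C\|u\|_{L^2}^2$ into the Forchheimer term (since $r+1>2$), and using Poincar\'e on half of $\|\Nx u\|_{L^2}^2$ to create a damping $\al\|u\|_{L^2}^2$, I arrive at
\[
\ddt\|u\|_{L^2}^2+\al\|u\|_{L^2}^2+\|\Nx u\|_{L^2}^2+c\|u\|_{L^{r+1}}^{r+1}\le C(1+\|g\|_{L^q}^q).
\]
Multiplying by $e^{\al s}$, integrating and multiplying back by $e^{-\al t}$ produces the $\|u(t)\|_{\H}^2$ bound and the weighted integrals of $\|\Nx u\|_{L^2}^2$ and $\|u\|_{L^{r+1}}^{r+1}$. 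Finally $|f(u)|\le C(1+|u|^r)$ together with $rq=r+1$ gives $\|f(u)\|_{L^q}^q\le C(1+\|u\|_{L^{r+1}}^{r+1})$, so its weighted integral is already under control.

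\emph{Step 2: the inertial term.} The only genuinely non-trivial forcing term is $(u,\Nx)u$. H\"older gives $\|(u,\Nx)u\|_{L^q}\le\|u\|_{L^a}\|\Nx u\|_{L^2}$ with $a=\tfrac{2q}{2-q}=\tfrac{2(r+1)}{r-1}$, and interpolation between $L^2$ and $L^{r+1}$ gives $\|u\|_{L^a}\le\|u\|_{L^2}^{1-\theta}\|u\|_{L^{r+1}}^{\theta}$ with $\theta=\tfrac2{r-1}$. Since $r>3$ we have $\theta\in(0,1)$, and Young's inequality applied to the resulting triple product $\|u\|_{L^2}^{q(1-\theta)}\|u\|_{L^{r+1}}^{q\theta}\|\Nx u\|_{L^2}^{q}$, with the three admissible exponents matching $\|u\|_{L^2}^2$, $\|u\|_{L^{r+1}}^{r+1}$ and $\|\Nx u\|_{L^2}^2$, yields
\[
\|(u,\Nx)u\|_{L^q}^q\le\nu\|\Nx u\|_{L^2}^2+\nu\|u\|_{L^{r+1}}^{r+1}+C_\nu\|u\|_{L^2}^2
\]
for any $\nu>0$. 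Thus the inertial term is dominated by the dissipative quantities of Step~1 plus a lower-order $L^2$ term.

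\emph{Step 3: weighted maximal regularity and assembly.} I rewrite \eqref{2.BF} as the linear Stokes system $\Dt u-\Dx u+\Nx p=\tilde g:=g-f(u)-(u,\Nx)u$, $\divv u=0$, $u|_{\partial\Om}=0$, and invoke the maximal $L^q$-regularity of the Stokes operator (see e.g.\ \cite{Galdi}). To build in the exponential weight I set $v=e^{\al s/q}u$; then $v$ solves the same system with forcing $e^{\al s/q}\tilde g+\tfrac\al q v$, and since the Stokes operator is sectorial with spectrum bounded away from $0$, maximal regularity applies to the shifted problem as soon as $\al<\al_0$ is small enough that the zero-order term $\tfrac\al q\|v\|_{L^q}\le C\tfrac\al q\|v\|_{W^{2,q}}$ is absorbed on the left. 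Undoing the substitution produces the pointwise trace bound $\|u(t)\|_{W^{2(1-1/q),q}}^q$ and the weighted integrals of $\|\Dt u\|_{L^q}^q$, $\|u\|_{W^{2,q}}^q$, $\|\Nx p\|_{L^q}^q$, controlled by $\|u_0\|_{W^{2(1-1/q),q}}^q e^{-\al t}$ and $\int_0^te^{-\al(t-s)}\|\tilde g\|_{L^q}^q\,ds$. Substituting the Step~1--2 bounds for $\|f(u)\|_{L^q}^q$ and $\|(u,\Nx)u\|_{L^q}^q$, choosing $\nu$ small to absorb the multiples of $\|\Nx u\|_{L^2}^2$ and $\|u\|_{L^{r+1}}^{r+1}$, the only leftover is the weighted integral of $\|u\|_{L^2}^2$; running the Step~1 estimate with a slightly larger rate $\al'>\al$ keeps $\int_0^te^{-\al(t-s)}e^{-\al's}\,ds$ bounded (avoiding a spurious factor $t$) and delivers exactly the right-hand side of \eqref{2.en}. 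The initial-data term is finite because $\V\subset W^{2(1-1/q),q}$, the embedding being harmless since $2(1-1/q)=\tfrac2{r+1}<1$.

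\emph{Main obstacle.} The crux is Step~2 and its compatibility with Step~3: the inertial term must be subordinated to the dissipative energy quantities with constants \emph{independent of the data}, and not merely bounded using an a~priori size of $\|u\|_{L^2}$. This is precisely where $r>3$ is used, since it forces $\theta=\tfrac2{r-1}<1$ and makes the three Young exponents admissible, so that $(u,\Nx)u$ is genuinely lower order relative to $\|\Nx u\|_{L^2}^2+\|u\|_{L^{r+1}}^{r+1}$. The secondary technical point is carrying the exponential weight through the maximal regularity estimate, which is what fixes the threshold $\al_0$ on the admissible decay rate.
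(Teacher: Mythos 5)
Your proposal is correct and follows essentially the same route as the paper: test with $u$ for the basic energy estimate, then treat $f(u)$ and $(u,\Nx)u$ as a right-hand side for the linear non-stationary Stokes system and apply exponentially weighted maximal $L^q$-regularity, with the inertial term controlled via the H\"older exponent $\frac{2q}{2-q}=\frac{2(r+1)}{r-1}$. The only (harmless) deviations are that the paper bounds the inertial term by the simpler embedding $L^{r+1}\subset L^{2q/(2-q)}$ (valid already for $r\ge3$, whereas your interpolation--Young refinement needs $r>3$ strictly) and cites Solonnikov for the weighted maximal regularity rather than deriving it by the conjugation $v=e^{\alpha s/q}u$ as you do.
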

\begin{proof} We multiply equation \eqref{2.BF} by $u$ and integrate over $x\in\Omega$ to get
\begin{equation}\label{2.eneq}
\frac12\frac d{dt}\|u(t)\|^2_{L^2}+\|\Nx u(t)\|^2_{L^2}+(f(u(t)),u(t))=(g(t),u(t)).
\end{equation}
Then, using assumption \eqref{2.f} together with the H\"older and Poincare inequality, we arrive at
\begin{multline}
\frac d{dt}\|u(t)\|^2_{L^2}+\alpha\|u(t)\|^2_{L^2}+\\+\alpha(\|u(t)\|^2_{H^1}+
\|u(t)\|_{L^{r+1}}^{r+1}+\|f(u(t))\|_{L^q}^q)\le C(1+\|g\|_{L^q}^q),
\end{multline}
where $C$ and $\alpha$ are positive constants which are independent of $t$ and $u$. Integrating this inequality, we have
\begin{multline}\label{2.en-bas}
\|u(t)\|^2_{L^2}+\alpha\int_0^te^{-\alpha(t-s)}\(\|u(s)\|^2_{H^1}+
\|u(s)\|_{L^{r+1}}^{r+1}+\|f(u(s))\|_{L^q}^q\)\,ds\\\le \|u(0)\|^2_{L^2}e^{-\alpha t}+ C\(\alpha^{-1}+\int_0^te^{-\alpha(t-s)}\|g(s)\|_{L^q}^q\,ds\).
\end{multline}
Thus, the basic energy estimate is proved (note that the assumption $r\ge3$ is nowhere used here). To complete estimate \eqref{2.en}, we need to rewrite equation \eqref{2.BF} in the form of a linear non-stationary Stokes equation:
\begin{equation}\label{2.Sto}
\Dt u-\Dx u+\Nx p=g(t)-f(u(t))-(u(t),\Nx)u(t):=g_u(t)
\end{equation}
and to apply the standard $L^q$-maximal regularity estimate to this equation. This gives
\begin{multline}\label{2.maxSt}
\|u(t)\|^q_{W^{2(1-1/q),q}}+\\+\int_0^te^{-\alpha(t-s)}\(\|\Dt u(s)\|^q_{L^q}+\|u(s)\|^q_{W^{2,q}}+\|\Nx p(s)\|^q_{W^{1,q}}\)\,ds\le\\\le C\|u(0)\|^q_{W^{2(1-1/q),2}}e^{-\alpha t}+C\int_0^te^{-\alpha(t-s)}\|g_u(s)\|^q_{L^q}\,ds,
\end{multline}
where $C$ and $\alpha$ are some constants and $|\alpha|$ is small enough, see e.g. \cite{Solo}. So, it only remains to estimate the norm of $g_u(s)$ in the right-hand side. Moreover, the term containing $f(u)$ is already estimated and we only need to estimate the inertial term. To this end, we need the assumption $r\ge3$ which allows us to use the H\"older inequality in the form
\begin{multline}\label{2.iner}
\|(u,\Nx)u\|_{L^q}^q\le \|u\|_{L^{\frac{2q}{2-q}}}^q\|\Nx u\|_{L^2}^q\le\\\le\|\Nx u\|^2_{L^2}+\|u\|_{L^{\frac{2q}{2-q}}}^{\frac{2q}{2-q}}\le C(1+\|\Nx u\|^2_{L^2}+\|u\|_{L^{r+1}}^{r+1}),
\end{multline}
since $\frac{2q}{2-q}=\frac{2(r+1)}{r-1}\le r+1$ if $r\ge3$. Thus, the inertial term is also controlled by \eqref{2.en-bas} and this estimate together with \eqref{2.maxSt} give the desired estimate \eqref{2.en} and finish the proof of the proposition.
\end{proof}
Analogously to the stationary case, we define a weak solution $u(t)$ of problem \eqref{2.BF} as a function which belongs to the space
$$
C([0,\infty),\Cal H_w)\cap L^2_{loc}([0,\infty),\Cal V)\cap L^{r+1}_{loc}([0,\infty).L^{r+1}),
$$
 which satisfies \eqref{2.BF} in the sense of distributions, i.e.
 $$
 -\<u,\Dt\varphi\> +\<\Nx u,\Nx\varphi\>+\<f(u),\varphi\>+\<(u,\Nx)u,\varphi\>=\<g,\varphi\>
 $$
 for all $\varphi\in C_0^\infty(\R_+\times\Omega)$ with $\divv \varphi(t) = 0$. Here and below $\<u,v\>:=\int_{\R}(u(t),v(t))\,dt$ and $C([0,\infty),\Cal H_w)$ means the space of $\Cal H$-valued functions $u(t)$, $t\in\R_+$, which are continuous in time in the weak topology of $\Cal H$. We summarize the known facts about the existence and uniqueness of such solutions in the following proposition.
\begin{proposition}\label{Prop2.weak}
Let the nonlinearity $f$ satisfy assumption \eqref{2.f} for some $r\ge 1$. Then, for every $u_0\in\Cal H$ and every $g\in L^q_{loc}([0,\infty),L^q(\Omega))$, problem \eqref{2.BF} possesses at least one weak solution $u$ which satisfies the energy estimate \eqref{2.en-bas}. If, in addition, $r>3$, then the weak solution of this problem is unique. Moreover, if $r>3$ and $u_0\in \Cal H\cap W^{2(1-1/q),q}(\Omega)$, then estimate \eqref{2.en} holds for the weak solution of \eqref{2.BF}.
\end{proposition}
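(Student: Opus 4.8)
The plan is to treat the three assertions in turn; existence rests on the standard machinery, whereas uniqueness and the higher estimate \eqref{2.en} are where the hypothesis $r>3$ and the Forchheimer structure are really used.

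\emph{Existence.} First I would build a solution by the Faedo--Galerkin method, projecting \eqref{2.BF} onto the span of the first $n$ eigenfunctions of the Stokes operator and testing the $n$-th approximation $u_n$ against itself. The cancellation $((u_n,\Nx)u_n,u_n)=0$ together with \eqref{2.f} reproduces, uniformly in $n$, the differential inequality behind \eqref{2.en-bas}; hence $u_n$ is bounded in $L^\infty_{loc}([0,\infty),\H)\cap L^2_{loc}([0,\infty),\V)\cap L^{r+1}_{loc}([0,\infty),L^{r+1})$ and $f(u_n)$ is bounded in $L^q_{loc}(L^q)$. Reading $\Dt u_n$ off the projected equation then gives a uniform bound in a negative-order space (the inertial term being controlled exactly as in the 3D Navier--Stokes theory, the Forchheimer term only improving the available bounds), so the Aubin--Lions--Simon lemma yields strong convergence of a subsequence in $L^2_{loc}(\H)$. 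This, the growth restriction in \eqref{2.f}, and a.e.\ convergence let me pass to the limit in $f(u_n)$ and in $(u_n,\Nx)u_n$ exactly as for the Navier--Stokes equations; the limit is a weak solution, and \eqref{2.en-bas} persists by weak/weak-$*$ lower semicontinuity of the norms. This part works for every $r\ge1$, see \cite{tem,Galdi}.

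\emph{Uniqueness for $r>3$.} Let $u_1,u_2$ be two weak solutions with the same data and put $w=u_1-u_2$. Subtracting the equations, pairing with $w$ (the pairing $(f(u_1)-f(u_2),w)$ being legitimate through the $L^{(r+1)/r}$--$L^{r+1}$ duality) and using $((u_2,\Nx)w,w)=0$ gives

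\begin{equation}
\tfrac12\tfrac{d}{dt}\|w\|^2_{L^2}+\|\Nx w\|^2_{L^2}+(f(u_1)-f(u_2),w)=-((w,\Nx)u_1,w).
\end{equation}

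The crucial observation is that the monotone part of $f$ now supplies a usable dissipation: integrating the lower bound in \eqref{2.f} along the segment from $u_2$ to $u_1$ gives $(f(u_1)-f(u_2),w)\ge c(|u_1|^{r-1},|w|^2)-C\|w\|^2_{L^2}$. I would integrate the inertial perturbation by parts as $-((w,\Nx)u_1,w)=((w,\Nx)w,u_1)$, estimate $|((w,\Nx)w,u_1)|\le\|\Nx w\|_{L^2}\,\|\,|w|\,|u_1|\,\|_{L^2}$, and then — since $r>3$ means $r-1>2$ — invoke the pointwise Young inequality $|u_1|^2\le\nu|u_1|^{r-1}+C_\nu$ to bound $\|\,|w|\,|u_1|\,\|^2_{L^2}\le\nu(|u_1|^{r-1},|w|^2)+C_\nu\|w\|^2_{L^2}$. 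Taking the free constants small, both $\|\Nx w\|^2_{L^2}$ and the Forchheimer term $(|u_1|^{r-1},|w|^2)$ on the left absorb the right-hand side, leaving $\frac{d}{dt}\|w\|^2_{L^2}\le C\|w\|^2_{L^2}$, so $w\equiv0$ by Gronwall. I expect this to be the main obstacle: it is exactly the strict inequality $r>3$ that furnishes the spare exponent needed to dominate the inertial term by the Forchheimer dissipation, and the argument degenerates at $r=3$, where the coefficient can no longer be made small. This is the route of \cite{KZ,HR17}.

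\emph{The estimate \eqref{2.en}.} Finally, suppose $u_0\in\H\cap W^{2(1-1/q),q}(\Omega)$. By the previous parts the (now unique) weak solution already obeys \eqref{2.en-bas}, whence $f(u)\in L^q_{loc}(L^q)$ and, by \eqref{2.iner}, $(u,\Nx)u\in L^q_{loc}(L^q)$, so the full right-hand side $g_u=g-f(u)-(u,\Nx)u$ of the linear Stokes form \eqref{2.Sto} lies in $L^q_{loc}(L^q)$. Regarding $u$ as the solution of this linear non-stationary Stokes problem with datum $u_0$ in the trace space $W^{2(1-1/q),q}$, the $L^q$-maximal regularity estimate for Stokes (as quoted in \cite{Solo}) applies and reproduces \eqref{2.maxSt}; combined with \eqref{2.en-bas} this is precisely \eqref{2.en}. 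Thus the argument of Proposition \ref{Prop2.en} carries over verbatim to the weak solution once \eqref{2.en-bas} is known for it, the only genuinely new input being the extra hypothesis $u_0\in W^{2(1-1/q),q}$ that places the initial datum in the correct trace space; uniqueness guarantees that the object to which the estimate applies is indeed the weak solution.
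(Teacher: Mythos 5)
Your proposal is correct and follows essentially the same route as the paper's own proof: existence by Galerkin approximation with the energy estimate \eqref{2.en-bas}, uniqueness via the difference identity in which the integrated-by-parts inertial term $((w,\Nx)w,u_1)$ is absorbed by the Forchheimer dissipation $(|u_1|^{r-1},|w|^2)$ using the pointwise Young inequality and $r-1>2$ (the paper does exactly this, merely with the symmetric decomposition leaving $((w,\Nx)u_2,w)$ instead), and the estimate \eqref{2.en} by applying the maximal $L^q$-regularity bound \eqref{2.maxSt} to the linear Stokes form \eqref{2.Sto}, with $g_u\in L^q_{loc}(L^q)$ secured by \eqref{2.iner}. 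No gaps worth flagging.
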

\begin{proof} Indeed, the existence of a solution follows in a standard way from estimate \eqref{2.en-bas} using, e.g. Galerkin approximations. The uniqueness of a solution is known for $r>3$ only. First of all, we use this assumption in order to check that the inertial term is in $L^q$, see \eqref{2.iner}. After that, we justify the multiplication of equation \eqref{2.BF} by $u$ as well as the multiplication of for difference of two weak solutions $u$ and $v$ by $u-v$ and this gives us the following identity for the difference $w:=u-v$:
\begin{equation}\label{2.endif}
\frac12\frac d{dt}\|w\|^2_{L^2}+\|\Nx w\|^2_{L^2}+([f(w+v)-f(v)],w)+ ((w,\Nx)v,w)=0,
\end{equation}
see \cite{KZ,GalZ} for the details. Then, using the integration by parts, we estimate the inertial term as follows
$$
|((w,\Nx)v,w)|\le (|w|\cdot |v|,|\Nx w|)\le \|\Nx w\|^2+(|w|^2,|v|^2).
$$
Moreover, using assumption \eqref{2.f} on the nonlinearity, we arrive at
$$
(f(u)-f(v),w)\ge \kappa'(|u|^{r-1}+|v|^{r-1},v^2)\ge (|v|^2,|w|^2)-C_{r,\kappa}\|w\|^2_{L^2}
$$
for some positive constants $\kappa'$ and $C_{r,\kappa}$ (here we have used that $r>3$ again). Inserting these estimates into \eqref{2.endif}, we have
$$
\frac12\frac d{dt}\|w\|^2_{L^2}\le C_{r,\kappa}\|w\|^2_{L^2}
$$
and the Gronwall inequality gives us that $w=0$ which proves the uniqueness.  Finally, if $u_0\in \Cal H\cap W^{2(1-1/q),q}(\Omega)$, we may apply the maximal $L^q$-regularity estimate \eqref{2.maxSt} to the linear Stokes equation \eqref{2.Sto} and verify that the unique weak solution \eqref{2.Sto} satisfies \eqref{2.en}. This finishes the proof of the proposition.
\end{proof}
We are now ready to state and prove the main result of this section, which gives the global well-posedness of problem \eqref{2.BF} in the phase space $\Cal V\cap L^{r+1}(\Omega)$.
\begin{theorem}\label{Th2.main} Let $u_0\in\Phi:=\Cal V\cap L^{r+1}(\Omega)$, the nonlinearity $f$ satisfy \eqref{2.f} and \eqref{2.fgr} for some $r>3$ and let $g$ satisfy \eqref{2.g}. Then the weak solution $u(t)\in\Phi$ for all $t\ge0$ and  satisfies the following estimate:
\begin{equation}\label{2.main}
\|u(t)\|^2_\Phi\!+\!\!\int_{t-1}^t\!\|\Dt u(s)\|^2_{L^2}ds\!\le\! Q\(\!\|u_0\|_\Phi^2e^{-\beta t}\!\!+\!\!\int_0^te^{-\beta(t-s)}\|g(s)\|^2_{L^2}ds\!\),
\end{equation}
where $\|u\|_\Phi^2:=\|u\|_{\Cal V}^2+\|u\|_{L^{r+1}}^{r+1}$, $Q$ is a monotone function and $\beta$ is a positive constant, both are independent of $t$ and $g$ and we put $0$ instead of $t-1$ if $t\le1$. Moreover, if we only assume that $u_0\in\Cal H$, then $u(t)\in\Phi$ for all $t>0$ and the following estimate holds for $t\ge1$:
\begin{equation}\label{2.main1}
\|u(t)\|^2_\Phi\!+\!\!\int_{t-1}^t\!\|\Dt u(s)\|^2_{L^2}ds\!\le\! Q\(\!\|u_0\|_{\Cal H}^2e^{-\beta t}\!\!+\!\!\int_0^te^{-\beta(t-s)}\|g(s)\|^2_{L^2}ds\!\)\!.
\end{equation}
\end{theorem}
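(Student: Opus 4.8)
The plan is to establish the key estimate \eqref{2.main} by deriving the second energy identity \eqref{0.2en} and controlling its only dangerous term using a Strichartz-type bound on the solution. First I would introduce the energy functional $E(t):=\frac12\|\Nx u(t)\|^2_{L^2}+(F(u(t)),1)$, which is natural thanks to the gradient assumption \eqref{2.fgr}. Formally multiplying equation \eqref{2.BF} by $\Dt u$ and integrating over $\Omega$ yields
\begin{equation}
\frac{d}{dt}E(t)+\|\Dt u(t)\|^2_{L^2}=(g,\Dt u)-((u,\Nx)u,\Dt u).
\end{equation}
The first term on the right is harmless: $(g,\Dt u)\le\frac14\|\Dt u\|^2_{L^2}+\|g\|^2_{L^2}$. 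The entire difficulty is concentrated in the inertial term $((u,\Nx)u,\Dt u)$, which I must bound so as to absorb the $\|\Dt u\|^2_{L^2}$ term into the left-hand side while leaving only quantities controlled by the basic energy estimate \eqref{2.en}.

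The key step is to estimate the inertial term by the Cauchy--Schwarz inequality as
\begin{equation}
|((u,\Nx)u,\Dt u)|\le\|u\|_{L^\infty}\|\Nx u\|_{L^2}\|\Dt u\|_{L^2}\le\frac14\|\Dt u\|^2_{L^2}+\|u\|^2_{L^\infty}\|\Nx u\|^2_{L^2}.
\end{equation}
The factor $\|u(t)\|^2_{L^\infty}$ is precisely where the Strichartz-type control advertised in the introduction enters: the plan is to establish a bound of the form $\int_{t_0}^{t_0+1}\|u(s)\|^2_{L^\infty}\,ds\le Q(\cdots)$ using the non-linear localization technique applied directly to the non-stationary equation \eqref{2.BF}, exactly as the refined stationary estimates of Theorem \ref{Th1.main} and Corollary \ref{Cor1.h2} furnish control of $\|u\|_{W^{1,3q}}$ and hence (via Sobolev embedding $W^{1,3q}\subset C$ in three dimensions for suitable $q$) pointwise bounds. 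Substituting this into the energy identity, I obtain
\begin{equation}
\frac{d}{dt}E(t)+\frac12\|\Dt u(t)\|^2_{L^2}\le\|g(t)\|^2_{L^2}+\|u(t)\|^2_{L^\infty}\|\Nx u(t)\|^2_{L^2}.
\end{equation}
Since $E(t)$ is comparable to $\|u\|^2_\Phi$ up to lower-order terms controlled by \eqref{2.en} (using the growth of $F$ from \eqref{2.f} and that $E$ is a compact perturbation of a uniformly convex functional, as remarked in the introduction), I would then invoke a Gronwall-type lemma with the exponentially weighted integrating factor $e^{-\beta(t-s)}$ to convert this differential inequality into the dissipative estimate \eqref{2.main}. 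The time-integrated term $\int_{t-1}^t\|\Dt u\|^2_{L^2}$ comes out directly from integrating the left-hand side over a unit interval.

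The hard part will unquestionably be the Strichartz estimate controlling $\int\|u\|^2_{L^\infty}$, since the introduction explicitly flags this as the most technical component and notes that full $H^2$-maximal regularity is \emph{not} available in the non-autonomous Dirichlet setting. Here the non-linear localization must be carried out on the non-stationary problem: the interior estimate and the tangential boundary estimate carry over from Steps 1--2 of Theorem \ref{Th1.main} with additional terms from $\Dt u$ and the inertial term, but the normal-direction estimate fails to close, so only the weaker anisotropic interpolation yielding $L^2_{loc}(L^\infty)$ (rather than $L^\infty_{loc}(L^\infty)$) survives. A secondary obstacle is the justification: the basic regularity of a weak solution does not immediately license multiplication by $\Dt u$, so the formal computation above must be understood as carried out on sufficiently smooth approximations (or via the maximal $L^q$-regularity from Proposition \ref{Prop2.en}), with the final estimate passing to the limit. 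For the second statement \eqref{2.main1}, I would use the instantaneous smoothing from \eqref{2.en}: starting from $u_0\in\Cal H$ only, the energy estimate already places $u(s)\in\Phi$ for almost every $s>0$, so I pick such an $s_0\in(0,1)$ as a new initial time with $\|u(s_0)\|_\Phi$ controlled by $\|u_0\|_{\Cal H}$ through \eqref{2.en}, and then apply \eqref{2.main} on $[s_0,\infty)$.
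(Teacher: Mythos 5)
Your route coincides with the paper's own proof in all structural points: nonlinear localization applied directly to the non-stationary equation (interior estimate, then tangential boundary estimate with the pressure excluded via maximal regularity), the anisotropic interpolation producing the Strichartz-type bound \eqref{2.inf-est} on $\int_0^te^{-\beta(t-s)}\|u(s)\|^2_{L^\infty}\,ds$, the second energy identity with the inertial term absorbed exactly as in your display, and the justification of the formal multiplication by $\Dt u$ on smooth approximating solutions. The genuine gap is in your final assembly step: the claim that an exponentially weighted Gronwall lemma converts the differential inequality into the dissipative estimate \eqref{2.main}. The inequality you derive, $\frac d{dt}\tilde E(t)\le C\(\|g(t)\|^2_{L^2}+\|u(t)\|^2_{L^2}\)+C\|u(t)\|^2_{L^\infty}\tilde E(t)$ with $\tilde E:=E+L\|u\|^2_{L^2}\ge0$, contains no damping term $-\beta\tilde E$ on the left-hand side --- multiplication by $\Dt u$ produces $\|\Dt u\|^2_{L^2}$, which cannot absorb $\|u\|^2_\Phi$ --- so Gronwall yields only $\tilde E(t)\le e^{C\int_0^t\|u(s)\|^2_{L^\infty}\,ds}\(\cdots\)$. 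Since the Strichartz estimate controls $\int\|u\|^2_{L^\infty}$ per unit interval, the exponent grows like $Ct$, and the resulting bound (this is precisely the paper's \eqref{2.non-dis}) blows up exponentially even for bounded $g$; no choice of weight $e^{-\beta(t-s)}$ repairs this, as the paper itself notes when restricting \eqref{2.non-dis} to $0\le t\le1$.

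The correct assembly, which you in fact have in hand but deploy only for \eqref{2.main1}, is to use the Gronwall output solely on unit time intervals (giving \eqref{2.nd}), combine it with a mean-value choice of $t_0\in[0,1]$ extracted from \eqref{2.en} to obtain the unit-time smoothing estimate \eqref{2.smo-h1}, and then import the exponential decay from the genuinely dissipative $L^2$-estimate \eqref{2.en}: for $t\ge1$ one bounds $\|u(t)\|^2_\Phi$ through $\|u(t-1)\|^2_{\Cal H}$, which decays. So your closing remark about picking $s_0\in(0,1)$ is not an add-on for the second statement --- it is the engine behind \eqref{2.main} itself, and without it your derivation of \eqref{2.main} is incomplete. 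Two smaller inaccuracies: (i) the pointwise embedding $W^{1,3q}\subset C(\overline\Omega)$ would require an a priori bound on $\int\|u\|^2_{W^{1,3q}}\,dt$, which the paper obtains (Corollary \ref{Cor2.reg}) only \emph{after} the theorem is proved; inside the proof one must instead use the trilinear interpolation \eqref{2.w-est}, $\|u\|^2_{L^\infty}\le C\|u\|^{1/2}_{L^{r+1}}\|u\|^{1/2}_{W^{2,q}}\|\Nx(\partial_\tau u)\|_{L^2}$, whose exponents match exactly the norms controlled by \eqref{2.en} and \eqref{2.tan2}; (ii) the phrase ``carry over from Steps 1--2'' conceals the hardest part of the paper's argument, namely the exclusion of the pressure via the splitting $u=G+v$ together with the special divergence structure in \eqref{2.w}--\eqref{2.bw}, and the estimate \eqref{2.str-est} of $\|\partial_\tau((u,\Nx)u)\|_{L^q}$, which closes only because $\frac{5q-4}{3-q}<2$ when $r>3$; these would still need to be supplied in full.
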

\begin{proof} Of course, the analogue of the smoothing property \eqref{2.main1} holds for all $t>0$ with the function $Q$ depending also on $1/t$ and we state the smoothing property for $t\ge1$ just in order to avoid extra technicalities.
\par
 We first assume that $u$ is a sufficiently smooth solution of \eqref{2.BF}, for instance,
\begin{equation}\label{2.reg}
u\in C_{loc}([0,\infty),H^2(\Omega))\cap L^2_{loc}([0,\infty),H^3(\Omega)),
\end{equation}
and derive the desired estimates for it. We divide the proof on several steps.
\par
{\it Step 1. Interior estimates.} This step is almost identical to Step1 in the proof of Theorem \ref{Th1.main}. Indeed, let $\varphi$ be the same as in that step. Multiplying equation \eqref{2.BF} by
$-\sum_{i}\partial_{x_i}(\varphi\partial_{x_i}u)$, we arrive at
\begin{multline}\label{2.int}
\frac d{dt}(\varphi,|\Nx u|^2)+(\varphi,|\Dx u|^2)+\kappa(|u|^{r-1}\varphi,|\Nx u|^2)\le \\\le C(1+\|g(t)\|^2_{L^2}+\|\Nx u\|^2_{L^2}+\|u\|^{r+1}_{L^{r+1}}+\|\Nx p\|^q_{L^q})+\\+2(\Dx p,\Nx\varphi\cdot u)-2((u,\Nx)u,\divv(\varphi\Nx u)).
\end{multline}
Thus, we just have an extra term in the right-hand side of \eqref{2.int} which is related with pressure and which should be properly estimated and also we now have
$$
\Dx p=-\divv f(u)-\divv g-\divv(u,\Nx)u
$$
with the extra term related with the divergence of the inertial term (in comparison with \eqref{1.5}). Both of the extra terms are not difficult to control. Indeed,
\begin{multline}\label{2.extra}
|((u,\Nx)u,\divv(\varphi\Nx u))|\le C(|u|\cdot |\Nx u|,\varphi|\Dx u|)+\\+C(|u|,\varphi^{1/2}|\Nx u|^2)|\le \frac14(\varphi,|\Dx u|^2)+\frac14\kappa(|u^{r-1},|\Nx u|^2)+C\|\Nx u\|^2_{L^2}.
\end{multline}
The extra term in the expression for the Laplacian of pressure $p$, after inserting it into the right-hand side of \eqref{2.int} gives us the term
$$
|(\divv(u,\Nx)u,\Nx\varphi\cdot u)|=|((u,\Nx) u, \Nx (\Nx\varphi\cdot u))|,
$$
which can be estimated exactly as in \eqref{2.extra}. Combining all of the estimates together, we arrive at
\begin{multline}\label{2.int1}
\frac d{dt}(\varphi,|\Nx u|^2)+\frac12(\varphi,|\Dx u|^2)+\frac\kappa2(|u|^{r-1}\varphi,|\Nx u|^2)\le \\\le C(1+\|g(t)\|^2_{L^2}+\|\Nx u\|^2_{L^2}+\|u\|^{r+1}_{L^{r+1}}+\|\Nx p\|^q_{L^q}).
\end{multline}
Finally, integrating this relation in time and using \eqref{2.en}, we arrive at the desired interior estimate
\begin{multline}\label{2.int2}
(\varphi,|\Nx u(t)|^2)+\\+\alpha\int_0^t e^{-\alpha(t-s)}\((\varphi,|\Dx u(s)|^2)+(|u|^{r-1}\varphi,|\Nx u|^2)\)\,ds\le \\\le C\|u_0\|^2_{\Cal V}e^{-\alpha t}+C\(1+\int_0^te^{-\alpha(t-s)}\|g(s)\|^2_{L^2}\,ds\).
\end{multline}
for some positive constants $C$ and $\alpha$.
\par
{\it Step 2. Boundary regularity: tangential direction.} Analogously to Section \ref{s1}, we multiply equation \eqref{2.BF} by $\partial^*_\tau\partial_\tau u$ and integrate over $x\in\Omega$. Then, in comparison with \eqref{1.10}, we will have an extra good term $\frac12\frac d{dt}\|\partial_\tau u\|^2_{L^2}$ as well as the term $((u,\Nx)u,\partial_\tau^*\partial_\tau u)$ related with the extra inertial term, which can be estimated as follows:
\begin{multline}
|((u,\Nx)u,\partial_\tau^*\partial_\tau u)|\le|((\partial_\tau u,\Nx) u,\partial_\tau u)|+|((u,\Nx)\partial_\tau u, \partial_\tau u)|+\\+C(|u|\cdot|\partial_\tau u|,|\Nx u|)\le |((\partial_\tau u,\Nx) u,\partial_\tau u)|+C(|u|^2,|\partial_\tau u|^2)+C\|\Nx u\|^2_{L^2}.
\end{multline}
Furthermore, integrating by parts, we get
\begin{multline}
|((\partial_\tau u,\Nx) u,\partial_\tau u)|\le C|(|\partial_\tau u|\cdot |u|,|\Nx(\partial_\tau u)|)|\le\\\le \eb\|\Nx(\partial_\tau u)\|^2+C_\eb(|u|^2,|\partial_\tau u|^2).
\end{multline}
Thus, using again that $r>3$, we arrive at
$$
|((u,\Nx)u,\partial_\tau^*\partial_\tau u)|\le \eb\(\|\Nx(\partial_\tau u)\|^2_{L^2}+(|u|^{r-1},|\partial_\tau u|^2)\)+C_\eb\|\Nx u\|^2_{L^2},
$$
where $\eb>0$ is arbitrary,
and, therefore, this extra term is under the control and, arguing as in the stationary case, we arrive at
\begin{multline}\label{2.26}
\frac d{dt}\|\partial_\tau u\|^2_{L^2}+\alpha\(\|\Nx(\partial_\tau u)\|^2_{L^2}+(|u|^{r-1},|\partial_\tau u|^2)+\|\partial_\tau f(u)\|^q_{L^q}\)\le\\\le C(1+\|g(t)\|_{L^2}^2)+
C\|\Nx u(t)\|^2_{L^2}+C\|u(t)\|^{r+1}_{L^{r+1}}+C|(A(x)\Nx(\partial_\tau p),u)|.
\end{multline}
Integrating this inequality in time and using  \eqref{2.en-bas}, after the standard transformations, we arrive at
\begin{multline}\label{2.tan}
\sup_{s\in[0,t]}\{e^{-\beta(t-s)}\|\partial_\tau u(s)\|^2\}+\kappa\int_0^te^{-\beta(t-s)}\(\|\Nx(\partial_\tau u(s))\|^2_{L^2}+\right.\\\left.+(|u(s)|^{r-1},|\partial_\tau u(s)|^2)+\|\partial_\tau f(u(s))\|^q_{L^q}\)ds\le C\|u_0\|^2_{\Cal V}e^{-\beta t}+C +\\+C\int_0^te^{-\beta(t-s)}\|g(s)\|^2_{L^2}\,ds+C\int_0^te^{-\beta(t-s)}|(A(x)\Nx(\partial_\tau p(s)),u(s))|\,ds,
\end{multline}
where $\kappa$, $\beta$ and $C$ are some positive constants, which are independent of $t$ and $u$. Thus, we only need to estimate the term, containing pressure. To this end, we introduce a function $G=G(t)$ as a solution of the linear Stokes equation:
$$
\Dt G-\Dx G+\Nx p_G=g(t),\ \ G\big|_{t=0}=u_0,\ \ \divv G=0,\ \ G\big|_{\partial\Omega}=0.
$$
Then, using the $L^2$-maximal regularity estimate for the linear Stokes equation, see \cite{Solo}, we end up with
\begin{multline}\label{2.Gest}
\|G(t)\|^2_{L^2}+\\+\int_0^te^{-\beta_1(t-s)}\(\|\Dt G(s)\|^2_{L^2}+\|G(s)\|^2_{H^2}+\|\Nx p_G(s)\|^2_{L^2}\)\,ds\le\\\le C\|u_0\|_{\Cal V}^2e^{-\beta_1(t-s)}+C\int_0^te^{-\beta_1(t-s)}\|g(s)\|^2_{L^2}\,ds
\end{multline}
for some positive constants $\beta_1>\beta$, and $C$ (and we also have the $L^q$-version of this estimate). We also introduce a new function $v:=u-G$ which also solves the linear Stokes equation:
\begin{equation}\label{2,v}
\Dt v-\Dx v+\Nx p_v=-f(u)-(u,\Nx)u,\ \ v\big|_{t=0}=0,\ \ \divv v=0.
\end{equation}
Differentiating this equation with respect to $\tau$ and denoting $w:=\partial_\tau v$ and $p_w:=\partial_\tau p_v$, we arrive at
\begin{multline}\label{2.w}
\Dt w-\Dx w+\Nx p_w=\partial_\tau f(u)-\partial_\tau (u,\Nx)u-[\Dx,\partial_\tau]v+\\+[\Nx,\partial_\tau]p_v,\ \ w\big|_{\partial\Omega}=0,\ w\big|_{t=0}=0,\  \divv w=[\divv,\partial_\tau]v:=H(t).
\end{multline}
Our plan is to apply the $L^q$-maximal regularity estimate to this linear non-homogeneous Stokes equation. Indeed, from estimates \eqref{2.en} and \eqref{2.Gest}, we only know that
\begin{multline}\label{2.H-est}
\|H(t)\|_{W^{1,q}}+\|\Dt H(t)\|_{W^{-1,q}}\le C(\|u(t)\|_{W^{2,q}}+\|\Dt u(t)\|_{L^q}+\\+\|\Dt G(t)\|_{L^q}+\|G(t)\|_{W^{2,q}})
\end{multline}
and the $L^q$-norm in time from the left-hand side is under the control. However, this is not enough in general to get the maximal $L^q$-regularity estimate (in general, we need $\Dt H$ to belong to $L^q(0,t;L^q)$, see e. g. \cite{FShi} for the counterexample). Fortunately, the function $H(t)$ in \eqref{2.w} has a special structure which allows us to overcome this problem. Namely, it is not difficult to see that
$$
H(t)=\divv W(t)+h(t),\ \ W_i:=-v\cdot \Nx \tau_i,\ \ h:=v\cdot \Nx\divv\tau.
$$
Important is that $W\big|_{\partial\Omega}=0$. Therefore, we may subtract the function $W$ from the solution $w$ and get a new linear Stokes problem for the function $\bar w:=w-W$:
\begin{multline}\label{2.bw}
\Dt \bar w-\Dx \bar w+\Nx p_w=\partial_\tau f(u)-\partial_\tau (u,\Nx)u-[\Dx,\partial_\tau]v+\\+[\Nx,\partial_\tau]p_v-g_W:=g_{\bar w},\ \ w\big|_{\partial\Omega}=0,\ w\big|_{t=0}=0,\  \divv w=h(t),
\end{multline}
where $g_W:=\Dt W-\Dx W$. Since both $W$ and $h$ are proportional to $v$, we have the control of the $L^q$-norms of $G_W$ and $\Dt h$ from \eqref{2.en}. Moreover, since
$$
\|[\Dx,\partial_\tau]v\|_{L^q}\le C\|v\|_{W^{2,q}},\ \ \|[\Nx,\partial_\tau] p_v\|_{L^q}\le C\|p_v\|_{W^{1,q}}
$$
and $p=p_G+p_v$, all terms with commutators are also controlled by \eqref{2.en}. We actually need not to estimate $\partial_\tau f(u)$ since this term is presented in the left-hand side of \eqref{2.tan} and will be finally cancelled out. However, we still need to estimate the most complicated term related with the inertial term in \eqref{2.bw}, but we prefer to postpone this estimate and first complete the exclusion of pressure. To this end, we apply the $L^q$-regularity estimate to problem \eqref{2.bw}, see \cite{FShi} and get
\begin{multline}
\int_0^te^{-\beta_1(t-s)}\|\Nx p_w(s)\|_{L^q}^q\,ds\le\\\le  C\int_0^t e^{-\beta_1(t-s)}\(\|g_{\bar w}(s)\|^q_{L^q}+\|h(s)\|^q_{W^{1,q}}+\|\Dt h(s)\|^q_{L^q}\)\,ds\le\\\le
C\int_0^t e^{-\beta_1(t-s)}\(\|\partial_\tau f(u(s))\|^q_{L^q}+\|\partial_\tau(u(s),\Nx)u(s))\|^q_{L^q}\)\,ds+\\+
C\int_0^te^{-\beta_1(t-s)}\(\| v(s)\|^q_{W^{2,q}}+\|\Dt v(s)\|^q_{L^q}+\|\Nx p_v(s)\|_{L^q}^q\)\,ds.
\end{multline}
Using the obvious estimate
\begin{equation}\label{2.weights}
\int_0^te^{-\beta_1(t-s)}\(\int_0^se^{-\alpha(s-\tau)}|U(\tau)|\,d\tau\)\,ds\le C^*\!\!\int_0^te^{-\beta_1(t-s)}|U(s)|\,ds,
\end{equation}
where $\alpha>\beta_1>0$ and the constant $C^*$ depends only on $\alpha$ and $\beta$, together with estimate \eqref{2.en} and the $L^q$-version of estimate \eqref{2.Gest}, we arrive at
\begin{multline}\label{2.pres}
\int_0^te^{-\beta_1(t-s)}\|\Nx p_w(s)\|_{L^q}^q\,ds\le \\\le
C\int_0^t e^{-\beta_1(t-s)}\(\|\partial_\tau f(u(s))\|^q_{L^q}+\|\partial_\tau(u(s),\Nx)u(s))\|^q_{L^q}\)\,ds+\\+
C\|u_0\|_{\Cal V}e^{-\beta_1 t}+C\(1+\int_0^te^{-\beta_1(t-s)}\| g(s)\|^q_{L^q}\,ds\).
\end{multline}
We are now ready to return to estimation of the last term in \eqref{2.tan}. Namely,
 \begin{multline}
 |(A(x)\Nx(\partial_\tau p),u)|\le |(\Nx(\partial _\tau p_G),A^*(x)u)|+|(A(x)\Nx p_w,u)||\le\\\le C\(\|u\|^2_{H^1}+\|\Nx p_G\|_{L^2}^2\)+\nu\|\Nx p_w\|^q_{L^q}+C_\nu\|u\|^{r+1}_{L^{r+1}},
\end{multline}
where $\nu>0$ is arbitrary. Using the obtained estimates \eqref{2.pres} and \eqref{2.Gest} together with \eqref{2.weights}, we exclude the pressure from \eqref{2.tan} and get
\begin{multline}\label{2.tan1}
\sup_{s\in[0,t]}\big\{e^{-\beta(t-s)}\|\partial_\tau u(s)\|^2\big\}+\int_0^te^{-\beta(t-s)}\(\|\Nx(\partial_\tau u(s))\|^2_{L^2}+\right.\\\left.+(|u(s)|^{r-1},|\partial_\tau u(s)|^2)+\|\partial_\tau f(u(s))\|^q_{L^q}\)ds\le C_\nu\|u_0\|^2_{\Cal V}e^{-\beta t}+C_\nu +\\+C_\nu\int_0^te^{-\beta(t-s)}\|g(s)\|^2_{L^2}\,ds+
\nu \int_0^te^{-\beta(t-s)}\|\partial_\tau((u(s),\Nx)u(s))\|_{L^q}^q\,ds.
\end{multline}
Moreover, the last term in this estimate can be further simplified. Namely,
\begin{equation}\label{2.inn}
\partial_\tau(u,\Nx)u=(\partial_\tau u,\Nx)u+(u,\Nx)\partial_\tau u+(u,[\Nx,\partial_\tau])u
\end{equation}
and
$$
\|(u,\Nx)\partial_\tau u\|_{L^q}^q\le \|\Nx\partial_\tau u\|_{L^2}^q\|u\|_{L^{\frac{2q}{2-q}}}^q\le
 \nu\|\Nx\partial_\tau u\|^2_{L^2}+C_\nu\|u\|_{L^{\frac{2q}{2-q}}}^{\frac{2q}{2-q}}.
 $$
 Since $\frac{2q}{q-2}=\frac{2(r+1)}{r-1}<r+1$ if $r>3$, this term is under the control. The third term in the right-hand side of \eqref{2.inn} can be estimated analogously using the fact that the commutator $[\Nx,\partial_\tau]$ is a first order differential operator. So, we only need to estimate the first term. We will do this with the help of \eqref{1.15}, the H\"older inequality and the fact that $\frac{3q}2\le2$, namely,
 \begin{multline}\label{2.str-est}
 \|\,|\partial_\tau u|^q\cdot|\Nx u|^q\|_{L^1}\le \|\partial_\tau u\|^q_{L^{3q/2}}\|u\|_{W^{1,3q}}^q\le\\\le C\|\partial_\tau u\|_{L^2}^q\|\Nx(\partial_\tau u)\|_{L^2}^{2q/3}\|\Dx u\|_{W^{2,q}}^{q/3}\le \|\Nx(\partial_\tau u)\|^2_{L^2}+\\+C\|\partial_\tau u\|_{L^2}^{\frac{3q}{3-q}}\|u\|^{\frac q{3-q}}_{W^{2,q}}\le \|\Nx(\partial_\tau u)\|^2_{L^2}+C\|\partial_\tau u\|_{L^2}^{\frac{5q-4}{3-q}}\|\partial_\tau u\|_{L^2}^{\frac{2(2-q)}{3-q}}\|u\|_{W^{2,q}}^{\frac q{3-q}}.
 \end{multline}
 Crucial for us is the fact that $\frac{5q-4}{3-q}<2$ for $q<\frac{10}7$ and therefore, due to our assumptions, $q<\frac43<\frac{10}7$, so the number $m:=2- \frac{5q-4}{3-q}$ is always positive. In addition, the first term in the right-hand side of \eqref{2.str-est} is not dangerous since it is absorbed by the corresponding term in the left hand side of \eqref{2.tan1}, so we only need to estimate the integral
 $$
 I:=\int_0^te^{-\beta(t-s)}\|\partial_\tau u(s)\|_{L^2}^{\frac{5q-4}{3-q}}\|\Nx u(s)\|_{L^2}^{\frac{2(2-q)}{3-q}}\|u(s)\|_{W^{2,q}}^{\frac q{3-q}}\,ds.
 $$
 To this end, we use that the  exponents $\frac{2(2-q)}{3-q}$ and $\frac q{3-q}$ are such that, due to the Young inequality
 $$
 \|\Nx u(s)\|_{L^2}^{\frac{2(2-q)}{3-q}}\|u(s)\|_{W^{2,q}}^{\frac q{3-q}}\le \|\Nx u(s)\|^2_{L^2}+\|u(s)\|^{q}_{W^{2,q}}
 $$
 and, therefore,
 \begin{multline}
 I\le C\(\sup_{s\in[0,t]}\big\{e^{-\beta(t-s)}\|\partial_\tau u(s)\|^2_{L^2}\big\}\)^{1-m/2}\times\\\times
 \int_0^te^{-\beta(t-s)/2}\(\|\Nx u(s)\|^2_{L^2}+\|u(s)\|^q_{W^{2,q}}\)\,ds\le\\\le
 \nu\sup_{s\in[0,t]}\big\{e^{-\beta(t-s)}\|\partial_\tau u(s)\|^2_{L^2}\big\}+\\+C_\nu\(\int_0^te^{-\beta(t-s)/2}\(\|\Nx u(s)\|^2_{L^2}+\|u(s)\|^q_{W^{2,q}}\)\,ds\)^{\frac2m}.
\end{multline}
Inserting this estimate to the right-hand side of \eqref{2.tan1} and using estimate \eqref{2.en} with $\alpha=\beta/2$, we finally end up with the following estimate:
   \begin{multline}\label{2.tan2}
\sup_{s\in[0,t]}\big\{e^{-\beta(t-s)}\|\partial_\tau u(s)\|^2\big\}+\int_0^te^{-\beta(t-s)}\(\|\Nx(\partial_\tau u(s))\|^2_{L^2}+\right.\\\left.+(|u(s)|^{r-1},|\partial_\tau u(s)|^2)+\|\partial_\tau f(u(s))\|^q_{L^q}\)ds\le\\\le C\(\|u_0\|^2_{\Cal V}e^{-\beta t/2}+1 +\int_0^te^{-\beta(t-s)/2}\|g(s)\|^2_{L^2}\,ds\)^{\tilde m},
\end{multline}
where $\tilde m:=\max\{1,2/m\}$. This finishes the boundary regularity estimate in tangential directions.
\par
{\it Step 3. Key interpolation estimate.} Namely, we start with the Gagliardo-Nirenberg inequality:
$$
\|u\|_{L^\infty}\le C\|u\|_{L^{r+1}}^{1/4}\|u\|_{W^{1,3q}}^{3/4},
$$
since $\frac34-3\(\frac1{4(r+1)}+\frac 1{4q}\)=0$, see e.g. \cite{MaO} or \cite{Adams}. This inequality,
 together with  \eqref{1.15}, give us
\begin{multline}\label{2.w-est}
\|u\|_{L^\infty}^2\le C\|u\|_{L^{r+1}}^{1/2}\|u\|_{W^{2,q}}^{1/2}\|\Nx(\partial_\tau u)\|_{L^2}\le\\\le C(\|u\|_{L^{r+1}}^{r+1}+\|u\|^q_{W^{2,q}}+\|\Nx(\partial_\tau u)\|^2_{L^2}),
\end{multline}
where we have used that $1=\frac1{2(r+1)}+\frac1{2q}+\frac12$. Using estimates \eqref{2.en} and
\eqref{2.tan2}, we get the desired estimate
\begin{multline}\label{2.inf-est}
\int_0^te^{-\beta(t-s)}\|u(s)\|^2_{L^\infty}\,ds\le\\\le C\(\|u_0\|_{\Cal V}^2e^{-\beta t/2}+\int_9^te^{-\beta(t-s)/2}\|g(s)\|^2_{L^2}\,ds\)^{\tilde m}.
\end{multline}
{\it Step 4. $\Phi$-energy estimate.} Till that moment, we have nowhere used our extra assumption that the nonlinearity $f$ is gradient, but it is essentially used at this step. Indeed, we now multiply equation \eqref{2.BF} by $\Dt u$ and integrate over $x\in\Omega$. This gives
\begin{multline}\label{2.gron}
    \frac d{dt}\(\frac12\|\Nx u\|^2_{L^2}+(F(u),1) + L \| u \|_{L^2}^2 \)+\frac12\|\Dt u\|^2_{L^2}=-\frac12\|\Dt u\|^2_{L^2} \\ - ((u,\Nx u),\Dt u) +(g + 2Lu,\Dt u)\le   C(\|g\|^2_{L^2}+\|u\|_{L^2}^2)+\\ +C \|u\|_{L^\infty}^2\(\frac12\|\Nx u\|^2+(F(u),1) + L \|u \|_{L^2}^2\),
\end{multline}
where $L$ is such that  $F(u)+L|u|^2\ge0$ (it exists due to assumption \eqref{2.f}). Note that assumption \eqref{2.f} implies also that
\begin{equation}\label{2.F}
\kappa_2|u|^{r+1}-C_2\le F(u)\le \kappa_1|u|^{r+1}+C_1
\end{equation}
for some positive constants $\kappa_i$ and $C_i$. Therefore, estimate \eqref{2.inf-est} allows us to apply the Gronwall inequality to \eqref{2.gron} and to get the following control:
\begin{multline}\label{2.non-dis}
\|u(t)\|_{H^1}^2+\|u(t)\|^{r+1}_{L^{r+1}}\le Ce^{C\int_0^t\|u(s)\|_{L^\infty}^2\,ds}\times\\\times\(\|u(0)\|_{H^1}^2+\|u(0)\|^{r+1}_{L^{r+1}}+
\int_0^t\(\|u(s)\|^2_{L^2}+\|g(s)\|^2_{L^2}\)\,ds\).
\end{multline}
We will use this estimate for $0\le t\le1$ only since it is growing in time even if $g(t)$ is bounded and, therefore, is not convenient for study the attractors. Combining \eqref{2.non-dis} with \eqref{2.inf-est} and \eqref{2.en}, we arrive at the following estimate for $t\in[0,1]$:
\begin{equation}\label{2.nd}
\|u(t)\|_{H^1}^2\!+\!\|u(t)\|^{r+1}_{L^{r+1}}\le Q\!\(\|u(0)\|_{H^1}^2\!+\!\|u(0)\|^{r+1}_{L^{r+1}}+\!\!\int_0^t\|g(s)\|^2_{L^2}\,ds\)
\end{equation}
for some monotone increasing function $Q$. For $t\ge1$, we will use the following smoothing estimate:
\begin{equation}\label{2.smo-h1}
\|u(1)\|_{H^1}^2\!+\!\|u(1)\|^{r+1}_{L^{r+1}}\le Q\!\(\|u(0)\|_{L^2}^2+\!\!\int_0^t\|g(s)\|^2_{L^2}\,ds\)
\end{equation}
for some new monotone function $Q$. This estimate is a standard corollary of \eqref{2.nd} and \eqref{2.en}. Indeed, from \eqref{2.en}, we know that
$$
\int_0^1\(\|u(t)\|_{H^1}^2\!+\!\|u(t)\|^{r+1}_{L^{r+1}}\)\,dt\le C\(\|u_0\|_{L^2}^2+1+\int_0^1\|g(t)\|^2_{L^2}\,dt\).
$$
Therefore, there exists $t_0\in[0,1]$ (depending on $u$) such that
$$
\|u(t_0)\|_{H^1}^2\!+\!\|u(t_0)\|^{r+1}_{L^{r+1}}\,  \le C\(\|u_0\|_{L^2}^2+1+\int_0^1\|g(t)\|^2_{L^2}\,dt\).
$$
Applying after that estimate \eqref{2.nd} on the time interval $t\in[t_0,1]$, we arrive at \eqref{2.smo-h1}. In turn, combining estimates \eqref{2.nd} (on interval $t\in[0,1]$) with estimate \eqref{2.smo-h1} (which will give us the estimate of the $H^1\cap L^{r+1}$-norm of $u(t)$ through the $L^2$-norm of $u(t-1)$, $t\ge1$) together with the dissipative estimate \eqref{2.en}, we end up with the desired estimates \eqref{2.main} and \eqref{2.main1}. The estimate for the $L^2$-norm of $\Dt u$ in them follows immediately by integrating \eqref{2.gron} in time.
\par
{\it Step 5. $\Phi$-regularity of solutions.} We recall that all previous estimates were derived assuming that $u(t)$ is a sufficiently smooth solution of \eqref{2.en}, for instance, satisfying \eqref{2.reg},
will be enough to justify all of them (here we used that $H^2\subset C$ in 3D, so all terms related with the nonlinearity are under the control). To get such a regular solution, we approximate the external force $g$ and the initial data $u_0$ by the sequences $g_n$ and $u_0^n$ of smooth functions. Then, as proved in \cite{KZ}, there exist an $L^\infty_{loc}(\R_+,H^2)$ smooth solution $u_n(t)$ of problem \eqref{2.en} where the initial data $u_0$ and the external force $g$ are replaced by $u_0^n$ and $g_n$ respectively. Moreover, since $f\in C^1$, it is easy to see that $\Dt f(u_n)\in L^2_{loc}(\R_+,L^2)$ and, therefore, the standard regularity result for the linear Stokes equation gives us that \eqref{2.reg} are satisfied. For this reason, the solutions $u_n$ satisfy estimates \eqref{2.main} and \eqref{2.main1} uniformly with respect to $n$. Passing to the limit $n\to\infty$, we see that the limit unique solution $u(t)$ of problem \eqref{2.BF} also satisfies these estimates. Thus, Theorem \ref{Th2.main} is proved.
\end{proof}
The next corollary is gives us slightly stronger version of estimate \eqref{2.inf-est}. This improved estimate will be used later for the attractors theory.
\begin{corollary}\label{Cor2.reg} Let the assumptions of Theorem \ref{Th2.main} hold. Then the solution $u$ of problem \eqref{2.BF} satisfies the following estimate:
\begin{multline}\label{2.imp}
\int_{t-1}^t\|\Nx u(s)\|^2_{W^{1,3q}}+\|u(s)\|_{L^\infty}^{8/3}\,ds\le\\\le Q\(\|u_0\|_{\Phi}^2e^{-\beta t/2}+\int_0^te^{-\beta(t-s)/2}\|g(s)\|^2_{L^2}\,ds\)
\end{multline}
for some monotone function $Q$ and positive constant $\beta$.
\end{corollary}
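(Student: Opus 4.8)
The plan is to reduce the whole left-hand side of \eqref{2.imp} to the single quantity $\int_{t-1}^t\|\Nx u(s)\|_{L^{3q}}^2\,ds$ and to bound that using a refined version of \eqref{1.15} together with the integral bounds \eqref{2.en}, \eqref{2.tan2} and the pointwise $\Phi$-bound coming from Theorem \ref{Th2.main}. For the reduction, recall the Gagliardo--Nirenberg inequality $\|u\|_{L^\infty}\le C\|u\|_{L^{r+1}}^{1/4}\|u\|_{W^{1,3q}}^{3/4}$ used in \eqref{2.w-est}; raising it to the power $8/3$ gives $\|u\|_{L^\infty}^{8/3}\le C\|u\|_{L^{r+1}}^{2/3}\|u\|_{W^{1,3q}}^2$, which is precisely why the exponent $8/3$ appears. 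Since $r>3$ forces $3q=3(r+1)/r<r+1$, the embedding $L^{r+1}\subset L^{3q}$ gives $\|u\|_{W^{1,3q}}^2\le C(\|u\|_{L^{r+1}}^2+\|\Nx u\|_{L^{3q}}^2)$. Estimate \eqref{2.main} already bounds $\|u(s)\|_\Phi$, hence $\|u(s)\|_{L^{r+1}}$, pointwise by the argument appearing on the right of \eqref{2.imp}; absorbing these factors, both $\|u\|_{L^\infty}^{8/3}$ and $\|u\|_{W^{1,3q}}^2$ are dominated by $Q(\cdots)\,(1+\|\Nx u\|_{L^{3q}}^2)$, so everything comes down to $\int_{t-1}^t\|\Nx u\|_{L^{3q}}^2\,ds$.

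The natural tool for $\|\Nx u\|_{L^{3q}}$ is \eqref{1.15}, i.e. $\|\Nx u\|_{L^{3q}}\le C\|u\|_{W^{2,q}}^{1/3}\|\Nx(\partial_\tau u)\|_{L^2}^{2/3}$, where \eqref{2.en} controls $\int\|u\|_{W^{2,q}}^q$ and \eqref{2.tan2} controls $\int\|\Nx(\partial_\tau u)\|_{L^2}^2$. However, squaring \eqref{1.15} and applying H\"older in time gives only $\|\Nx u\|_{L^{3q}}\in L^{3q/(1+q)}_t$, and $3q/(1+q)<2$ exactly because $q<2$; thus the square fails to be integrable and the two available integral bounds alone do not close the estimate. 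This time-integrability gap is the main obstacle.

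To overcome it I would use a refined variant of the anisotropic interpolation of Step 3 of Theorem \ref{Th1.main}, retaining the lower-order $L^2$-term which is pointwise controlled by \eqref{2.main}. Writing $U=\Nx u$ and matching the scaling of $\|U\|_{L^{3q}}$, $\|U\|_{L^2}$, $\|\nabla U\|_{L^q}\sim\|u\|_{W^{2,q}}$ and $\|\partial_\tau U\|_{L^2}\sim\|\Nx(\partial_\tau u)\|_{L^2}$, one obtains the family
\[
\|\Nx u\|_{L^{3q}}\le C\,\|\Nx u\|_{L^2}^{a}\,\|u\|_{W^{2,q}}^{b}\,\|\Nx(\partial_\tau u)\|_{L^2}^{c},\qquad a+b+c=1,
\]
with $a=(\tfrac1q-\tfrac12)(1-3b)$, which is positive for every $b<\tfrac13$ (the case $b=\tfrac13,\ a=0$ recovers \eqref{1.15}). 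Choosing $b=\tfrac15$ gives $a>0$ and $c=1-\tfrac{2}{5q}$, and after squaring and absorbing the pointwise-bounded factor $\|\Nx u\|_{L^2}^{2a}=\|u\|_{\Cal V}^{2a}$ the remaining integrand $\|u\|_{W^{2,q}}^{2/5}\|\Nx(\partial_\tau u)\|_{L^2}^{2c}$ has exponents satisfying $\tfrac{2/5}{q}+c=1$, so Young's inequality yields the clean bound by $C\big(\|u\|_{W^{2,q}}^{q}+\|\Nx(\partial_\tau u)\|_{L^2}^{2}\big)$.

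Finally I would integrate over the bounded interval $[t-1,t]$, on which the exponential weights $e^{-\beta(t-s)}$ are bounded below, so that the plain integrals of $\|u\|_{W^{2,q}}^q$ and $\|\Nx(\partial_\tau u)\|_{L^2}^2$ are controlled by the weighted dissipative estimates \eqref{2.en} and \eqref{2.tan2}; together with the pointwise factors from \eqref{2.main} this produces exactly the right-hand side of \eqref{2.imp} with a monotone function $Q$ absorbing all the intervening powers. As in Step 5 of the proof of Theorem \ref{Th2.main}, these manipulations are first justified for the smooth approximating solutions and then passed to the limit, which completes the proof of the corollary.
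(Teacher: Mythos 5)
Your reduction of the $L^\infty$-term and your diagnosis of the time-integrability gap are both correct: squaring \eqref{1.15} only yields $\|\Nx u\|_{L^{3q}}\in L^{3q/(1+q)}_t$ with $3q/(1+q)<2$, so \eqref{2.en} and \eqref{2.tan2} alone do not close the estimate. However, the inequality you invent to repair this,
\begin{equation*}
\|\Nx u\|_{L^{3q}}\le C\,\|\Nx u\|_{L^2}^{a}\,\|u\|_{W^{2,q}}^{b}\,\|\Nx(\partial_\tau u)\|_{L^2}^{c},
\qquad a=\Big(\tfrac1q-\tfrac12\Big)(1-3b),\quad a+b+c=1,
\end{equation*}
is \emph{false} for every $b<\tfrac13$. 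Your exponent relation comes from isotropic dimension counting, but the situation is anisotropic: near the boundary you control the normal derivative of $U=\Nx u$ only in $L^q$ and the tangential derivatives only in $L^2$; there is no normal derivative in $L^2$ to trade against. Test the inequality on thin slabs in the flattened chart, $U(y)=\phi(\lambda y_1)\psi(y_2,y_3)$ with $\phi,\psi$ fixed bumps: then $\|U\|_{L^{3q}}\sim\lambda^{-1/(3q)}$, $\|U\|_{L^2}\sim\|\partial_\tau U\|_{L^2}\sim\lambda^{-1/2}$, while $\|\nabla U\|_{L^q}\sim\lambda^{1-1/q}$, and consistency as $\lambda\to\infty$ forces $-\tfrac1{3q}\le \tfrac{3b}2-\tfrac bq-\tfrac12$, i.e. $3b(3q-2)\ge 3q-2$, i.e. $b\ge\tfrac13$. (For your choice $b=\tfrac15$, $q=\tfrac54$, the two sides scale as $\lambda^{-0.27}$ versus $\lambda^{-0.36}$, so the ratio blows up.) The family therefore degenerates to exactly the endpoint \eqref{1.15}, which you have already shown to be insufficient, and nothing else in your argument (you do not invoke the divergence-free structure) restores it.

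The ingredient you are missing is the one piece of new information supplied by Theorem \ref{Th2.main} that your proposal never uses: the bound on $\int_{t-1}^t\|\Dt u(s)\|^2_{L^2}\,ds$ in \eqref{2.main}. The paper's proof rewrites \eqref{2.BF} at each fixed time as the \emph{stationary} problem \eqref{2.BFs} with right-hand side $\tilde g(t)=g(t)-\Dt u(t)-(u(t),\Nx)u(t)+Lu(t)$, whose $L^2_{loc}(\R_+,L^2)$-norm is controlled by Theorem \ref{Th2.main} (the inertial term via $\|u\|_{L^\infty}\|\Nx u\|_{L^2}$ and \eqref{2.inf-est}). Then the stationary estimates \eqref{1.en} (raised to the power $2/q$) and \eqref{1.3q} apply pointwise in time; since \eqref{1.3q} bounds $\|u\|^2_{W^{1,3q}}$ by $C(1+\|\tilde g\|^2_{L^2})$ --- a genuinely quadratic, maximal-regularity-type estimate --- integration over $[t-1,t]$ yields \eqref{2.inm} with no interpolation gymnastics, and in particular also $\int_{t-1}^t\|u\|^2_{W^{2,q}}\,ds$. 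The $L^\infty$ part then follows from \eqref{2.s-est}, $\|u\|_{L^\infty}^{8/3}\le C\|u\|_{L^{r+1}}^{2/3}\|u\|_{W^{2,q}}^{2/3}\|\Nx(\partial_\tau u)\|_{L^2}^{4/3}$, where Young's inequality with exponents $3$ and $\tfrac32$ closes against the $L^2_t$-bounds on $\|u\|_{W^{2,q}}$ and $\|\Nx(\partial_\tau u)\|_{L^2}$; this is close in spirit to your Gagliardo--Nirenberg reduction, but keeps $\|u\|_{W^{2,q}}$ as a separate factor rather than routing everything through $\|\Nx u\|_{L^{3q}}^2$. Replacing your interpolation step by this stationary-estimate argument salvages the proof.
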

\begin{proof} We rewrite equation \eqref{2.BF} in the form of a stationary problem \eqref{1.BF} at every fixed time $t$, namely,
\begin{multline}\label{2.BFs}
-\Dx u+\Nx p+f(u)+Lu=\tilde g(t):=\\=g(t)-\Dt u(t)-(u(t),\Nx)u(t)+L u(t),\ \ \divv u=0,\ u\big|_{\partial\Omega}=0.
\end{multline}
Moreover, due to the proved theorem, the $L^2_{loc}(\R_+,L^2)$-norm of $\tilde g$ is under the control. For this reason, we may apply estimates \eqref{1.en} and \eqref{1.3q} to this equation (recall that the modified non-linearity $\tilde f(u):=f(u)+Lu$ satisfies \eqref{1.f}. Then, integrating estimates
 \eqref{1.en} (in a power $2/q$) and \eqref{1.3q}, we arrive at
 \begin{multline}\label{2.inm}
 \int_{t-1}^t\|u(s)\|^2_{W^{2,q}}+\|u(s)\|^2_{W^{1,3q}}\,ds\le\\\le Q\(\|u_0\|_{\Phi}^2e^{-\beta t/2}+\int_0^te^{-\beta(t-s)/2}\|g(s)\|^2_{L^2}\,ds\).
\end{multline}
Thus, we have got the part of the desired estimate related with the $W^{1,3q}$-norm of $u$. In order to get the remaining part, we improve estimate \eqref{2.w-est} using that we now have estimate for the $L^2$-norm in time for $\|u(t)\|_{W^{2,q}}$ and also the $L^\infty$ norm in time for $\|u(t)\|_{L^{r+1}}$:
\begin{multline}\label{2.s-est}
\|u\|_{L^\infty}^{8/3}\le C\|u\|_{L^{r+1}}^{2/3}\|u\|_{W^{2,q}}^{2/3}\|\Nx(\partial_\tau u)\|_{L^2}^{4/3}\le\\\le C\|u\|_{L^{r+1}}^{2/3}(\|u\|^2_{W^{2,q}}+\|\Nx(\partial_\tau u)\|^2_{L^2}),
\end{multline}
This estimate, together with \eqref{2.main} and \eqref{2.inm} completes the desired estimate \eqref{2.imp} and finishes the proof of the corollary.
\end{proof}
\begin{remark}\label{Rem-reg} In the case of periodic boundary conditions, we are able to multiply equation \eqref{2.BF} by $\Dx u$ and the pressure term will still disappear. This immediately gives us the result of Theorem \ref{Th1.main} with linear function $Q$ and also the control of the integral of $(f'(u)\Nx u,\Nx u)$. Then, from \eqref{1.ff}, we get the control of the $L^q$-norm in time of $\|f(u)\|_{L^{3q}}$. Combining this with the interpolation inequality \eqref{1.fl2} and the $L^\infty$-control for $\|f(u(t))\|_{L^q}$, we arrive at the control of the $L^{\frac{4q}{3(2-q)}}_{loc}(\R_+,L^2)$-norm of $f(u)$. In particular, if $r\le 5$. then
$\frac{4q}{3(2-q)}\ge2$ and we get the $L^2$ space-time regularity of $f(u)$ which together with the $L^2$-regularity estimate for the linear Stokes problem gives us the maximal $L^2$-regularity for equation \eqref{2.BF}, namely,
\begin{equation}\label{2.max-l2}
u,\Dt u,\Nx p, \Dx u, f(u)\in L^2_{loc}(\R_+,L^2(\Omega).
\end{equation}
Unfortunately, we do not know how to get such a regularity for $r>5$ (even in the case of periodic boundary conditions). Instead, applying the anisotropic
$L^{\frac{4q}{3(2-q)}}(L^2)$-regularity estimate for the linear Stokes equation, we have a weakened version of the regularity \eqref{2.max-l2} for $r>5$:
\begin{equation}\label{2.per-l2}
\Dt u,\Dx u\in L^2_{loc}(\R_+,L^2(\Omega)),\ \ f(u),\Nx p\in L^{\frac{4q}{3(2-q)}}_{loc}(\R_+,L^2(\Omega)).
\end{equation}
We now return to the case of Dirichlet boundary conditions. In this case, we do not have the control over the term $(f'(u)\partial_n u,\partial_n u)$, so the regularity \eqref{2.per-l2} should be further weakened. In this case, we need to use estimate \eqref{1.fest} instead. Note that for $r\le 5$, we have the exponent $s=2$ there and, therefore, using again that the term with the $L^q$-norm of $g$ comes from the estimate of $\|f(u(t))\|_{L^q}$ and we have the $L^\infty$-norm control for this term, we see that, for $r\le5$, we have the maximal regularity \eqref{2.max-l2} for the case of Dirichlet boundary conditions as well. In the case $r>5$, we have $s=\frac{4(2-q)}{3q-2}$ and, therefore, the obtained regularity reads
\begin{equation}\label{2.dir-l2}
\Dt u\in L^2_{loc}(\R_+,L^2(\Omega)),\ \Dx u,\Nx p,f(u)\in L^{\frac{3q-2}{2-q}}_{loc}(\R_+,L^2(\Omega)).
\end{equation}
Since $\frac{4q}{3(2-q)}>\frac{3q-2}{2-q}$ for $r>5$, the regularity \eqref{2.dir-l2} available for Dirichlet boundary conditions and $r>5$ is indeed weaker than in the case of periodic boundary conditions. We do not know whether this is a drawback of the method or a real loss of regularity for the case of Dirichlet boundary conditions.
\par
We also note that the most difficult part in the proof of the key Theorem \eqref{Th2.main} was to estimate the $L^q$-norm of $\partial_\tau((u,\Nx)u)$. There is an alternative way to treat this term, namely, we may rewrite the inertial term in the form of $\divv(u\otimes u)$ and then, up to lower order terms, we will need to estimate $\divv(u\otimes\partial_\tau u+\partial_\tau u\otimes u)$. Since we have the term $(|u|^{r-1} \partial_\tau u,\partial_\tau u)$ in the right-hand side of \eqref{2.26}, the $L^2_{loc}(\R_+,H^{-1})$-norm of  $\partial_\tau((u,\Nx)u)$ is under the control. In order to complete the estimate in this way, we need the maximal regularity for the non-stationary Stokes equation in $H^{-1}$. Unfortunately, in contrast to the stationary case, this maximal regularity fails in general and we need some extra assumptions on the right-hand side in order to restore it. It would be interesting to check whether or not the inertial term satisfies these extra assumptions.
\par
We finally note that the function $Q$ in the key estimate \eqref{2.main} has an exponential growth rate
($Q(z)\sim e^{Cz^{\tilde m}}$), which somehow indicates that the nonlinearities in the problem \eqref{2.BF} are {\it critical} for all $r>3$. Since there is no such a criticality for the case of periodic boundary conditions, we expect that this also may be the drawback of the method.
\end{remark}

\section{The higher energy identity}\label{s3}
The aim of this section is to verify the  $H^1$-energy equality, which can be formally obtained by multiplying equation \eqref{2.BF} by $\Dt u$ and which is the key technical tool for verifying the asymptotic compactness of the dynamical processes associated with this equation. The problem here is that the proved regularity of a solution does not allow to interpret the terms $(\Dx u,\Dt u)$ and $(f(u),\Dt u)$ in the sense of distributions, only the inner product with their difference  $(\Dx u-f(u)-\Nx p,\Dt u)$ is well-defined. Therefore, we need some accuracy with verifying the identity
\begin{multline}\label{3.en}
\frac d{dt} E(u(t))=-(\Dt u(t)\!-\!(u(t),\Nx) u(t)+g,\Dt u(t)):=(\Dt u(t),H_u(t)),\\ E(u):=\frac12\|\Nx u\|^2_{L^2}+(F(u),1).
\end{multline}
 Following \cite{MieZ14}, we use the convexity arguments to verify \eqref{3.en}.
\begin{theorem}\label{Th3.main} Let the assumptions of Theorem \ref{Th2.main} hold and let $u(t)$ be the solution $u(t)$ of \eqref{2.BF}. Then the function $t\to E(u(t))$ is absolutely continuous and the identity \eqref{2.en} holds for almost all $t\in\R_+$ as well as in the sense of distributions.
\end{theorem}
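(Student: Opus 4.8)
The plan is to follow the convexity strategy of \cite{MieZ14}, since a direct computation is not available: with the regularity produced by \S\ref{s2}, neither $(\Dx u,\Dt u)$ nor $(f(u),\Dt u)$ is separately a well-defined locally integrable function of $t$ (recall that $f(u)$ only lives in $L^q$ with $q<2$), so I cannot differentiate the two pieces of $E$ term by term. Instead I would exploit that $E$ is a lower-order perturbation of a uniformly convex functional and read the identity off from the abstract chain rule for convex functions along $H^1$-curves. Concretely, let $L$ be the constant of \eqref{2.f} for which $w\mapsto F(w)+L|w|^2$ is convex (its pointwise Hessian is $f'(w)+2L\ge\kappa|w|^{r-1}+L>0$), and set $\widehat E(u):=\tfrac12\|\Nx u\|^2_{L^2}+(F(u)+L|u|^2,1)$, extended by $+\infty$ off $\Phi$. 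Then $\widehat E$ is proper, convex and lower semicontinuous on $\H$, finite along $u(\cdot)$ by \eqref{2.F}, and $E(u)=\widehat E(u)-L\|u\|^2_{L^2}$.

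First I would assemble the regularity. From Theorem~\ref{Th2.main} one has $u\in L^\infty_{loc}(\R_+,\Phi)$ with $\Dt u\in L^2_{loc}(\R_+,\H)$, hence $u\in H^1_{loc}(\R_+,\H)$; from Corollary~\ref{Cor2.reg} one has $u\in L^{8/3}_{loc}(\R_+,L^\infty)$, whence $\|(u,\Nx)u\|_{L^2}\le\|u\|_{L^\infty}\|\Nx u\|_{L^2}$ gives $(u,\Nx)u\in L^2_{loc}(\R_+,\H)$. Writing $\Pi$ for the Leray projector and defining the residual $\xi:=\Pi\bigl(-\Dt u-(u,\Nx)u+g+2Lu\bigr)$, I then obtain $\xi\in L^2_{loc}(\R_+,\H)$.

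The heart of the argument is to verify that $\xi(t)\in\partial_{\H}\widehat E(u(t))$ for almost every $t$. For any divergence-free $w\in\Phi$, testing \eqref{2.BF} at a fixed time against $w$ (legitimate since $\Dt u\in L^2$) gives $(\Nx u,\Nx w)+(f(u),w)=(g,w)-(\Dt u,w)-((u,\Nx)u,w)$, that is $(\xi,w)=(\Nx u,\Nx w)+(f(u)+2Lu,w)$, where each pairing is finite because $f(u)\in L^q$ and $w\in L^{r+1}$ are H\"older-conjugate. Combining this with the pointwise convexity inequalities $\tfrac12\|\Nx v\|^2\ge\tfrac12\|\Nx u\|^2+(\Nx u,\Nx(v-u))$ and $(F(v)+L|v|^2,1)\ge(F(u)+L|u|^2,1)+(f(u)+2Lu,v-u)$ yields the subgradient inequality $\widehat E(v)\ge\widehat E(u(t))+(\xi(t),v-u(t))$ for all $v\in\Phi$, which is exactly the assertion $\xi(t)\in\partial\widehat E(u(t))$.

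With $u\in H^1_{loc}(\R_+,\H)$, $\xi\in L^2_{loc}(\R_+,\H)$ and $\xi(t)\in\partial\widehat E(u(t))$ in hand, I would invoke the chain rule for convex functionals along absolutely continuous curves (as used in \cite{MieZ14}) to conclude that $t\mapsto\widehat E(u(t))$ is absolutely continuous with $\tfrac{d}{dt}\widehat E(u(t))=(\xi(t),\Dt u(t))$ for a.e.\ $t$. Since $t\mapsto\|u(t)\|^2_{L^2}$ is absolutely continuous with derivative $2(u,\Dt u)$, subtracting $L\|u\|^2_{L^2}$ gives the absolute continuity of $E(u(\cdot))$ together with
\[
\frac{d}{dt}E(u(t))=(\xi-2Lu,\Dt u)=(-\Dt u-(u,\Nx)u+g,\Dt u),
\]
which is \eqref{3.en}; the distributional form is immediate, being the a.e.\ derivative of an absolutely continuous function. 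I expect the third paragraph to be the main obstacle: making the identification of the PDE residual with a genuine selection of $\partial\widehat E$ fully rigorous, and in particular securing $\xi\in L^2_{loc}(\R_+,\H)$, hinges on the Strichartz-type bound $u\in L^{8/3}_{loc}(L^\infty)$ of Corollary~\ref{Cor2.reg} to control the inertial term $(u,\Nx)u$, the one place where the delicate regularity theory of \S\ref{s2} is indispensable.
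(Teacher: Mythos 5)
Your argument is correct, and it reaches the theorem by a genuinely different mechanism than the paper, even though both are ``convexity arguments'' in the spirit of \cite{MieZ14}. The paper proves the chain rule by hand: the two-sided difference-quotient inequalities \eqref{3.h1}--\eqref{3.h} give the integral identity \eqref{3.a-en} only for \emph{almost all} $S\le T$ (Lebesgue points of $E(u(\cdot))$), and removing this restriction costs two further steps --- the energy \emph{inequality} \eqref{3.e-in} is first established for all times by approximating $(u_0,g)$ by smooth data and passing to the limit in the exact identity for $u_n$, the delicate point being the strong convergence $(u_n,\Nx)u_n\to(u,\Nx)u$ in $L^2_{loc}(\R_+,L^2)$ of \eqref{3.st-c}, and then weak $\Phi$-continuity plus lower semicontinuity of $E$ upgrade the a.e.\ identity to absolute continuity. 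You instead package the convexity into $\widehat E$, identify the PDE residual $\xi=\Pi(-\Dt u-(u,\Nx)u+g+2Lu)$ as an $L^2_{loc}(\R_+,\H)$-selection of $\partial\widehat E(u(\cdot))$, and invoke the abstract chain rule for convex functionals along $H^1$-curves in a Hilbert space (Brezis' lemma on maximal monotone operators), which yields absolute continuity and the identity in one stroke, with no approximation argument and no a.e.-to-everywhere upgrade; note also that your final formula $(-\Dt u-(u,\Nx)u+g,\Dt u)$ has the correct signs, whereas \eqref{3.en} as printed contains a sign typo. Both routes consume the same hard input, the Strichartz-type bound of Corollary \ref{Cor2.reg}: you use it to place the inertial term, hence $\xi$, in $L^2_{loc}(\R_+,\H)$, while the paper uses it both for $H_u\in L^2_{loc}$ and for \eqref{3.st-c}. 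Two small points deserve a word in a polished write-up: (i) lower semicontinuity of your extension of $\widehat E$ by $+\infty$ off $\Phi$ is not completely automatic, since $\H\cap H^1(\Omega)$ is strictly larger than $\V$; it holds because sub-level sets of $\widehat E$ are bounded in $H^1_0\cap L^{r+1}$ and $H^1_0$ is weakly closed, so the limit stays in $\Phi$ and Fatou plus weak lower semicontinuity of the Dirichlet norm apply; (ii) the pointwise-in-time subgradient test is legitimate because, by \eqref{2.en} and \eqref{2.main}, every term of \eqref{2.BF} is a locally integrable function of $t$ with values in $\Phi^*$ (in particular $p(t)\in W^{1,q}$, so $(\Nx p,w)=0$ for $w\in\Phi$), hence the equation holds in $\Phi^*$ for a.e.\ fixed $t$. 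Finally, be aware that the paper's approximation machinery is not wasted effort: the convergence \eqref{3.st-c} is reused verbatim in the proof of the strong attractor (Theorem \ref{Th4.main}), so your shortcut here would still need that lemma later.
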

\begin{proof} Note first of all that, without loss of generality, we may assume that $f'(u)\ge0$, so $F(u)$ is convex.  We use the following identities, which can be verified by straightforward computations:
\begin{multline}\label{3.h1}
-(\Dx u(t),u(t+h)-u(t))+\frac12\|\Nx u(t+h)-\Nx u(t)\|^2_{L^2}=\\=\frac12\(\|\Nx u(t+h)\|^2_{L^2}-\|\Nx u(t)\|^2_{L^2}\)=\\=-(\Dx u(t+h),u(t+h)- u(t))-\frac12\|\Nx u(t+h)-\Nx u(t)\|^2_{L^2}
\end{multline}
and
\begin{multline}\label{3.h2}
(f(u(t+h)),u(t+h)-u(t)))+\\+\int_0^1(f'(su(t+h)+(1-s)u(t))(u(t+h)-u(t),u(t+h)-u(t))\,ds=\\=
(F(u(t+h)),1)-(F(u(t)),1)=\\=(f(u(t)),u(t+h)-u(t)))-\\-\int_0^1(f'(su(t+h)+(1-s)u(t))(u(t+h)-u(t),u(t+h)-u(t))\,ds.
\end{multline}
Note that all terms in \eqref{3.h1} and \eqref{3.h2} make sense as functions from $L^1_{loc}(\R_+)$ and these identities and the regularity of a solution $u(t)$ is enough to justify them. Taking a sum of these two identities and using that $f'(u)\ge0$, we end up with the following two-sided inequality:
\begin{multline}\label{3.h}
(-\Dx u(t+h)+f(u(t)),u(t+h)-u(t)) \le\\\le E(u(t+h))-E(u(t))\le (-\Dx u(t)+f(u(t)),u(t+h)-u(t)).
\end{multline}
Inserting the expression for $-\Dx u+f(u)$ from \eqref{2.BF} to \eqref{3.h} and integrating over $t\in[S,T]$, we arrive at
\begin{multline}\label{3.hh}
\frac1h\int_S^{T}(H_u(t+h),u(t+h)-u(t))\,dt\le\\\le
\frac1h\int_T^{T+h}\!\!E(u(t))\,dt-\frac1h\int_S^{S+h}\!\!E(u(t))\,dt\le \frac1h\int_S^T\!(H_u(t),u(t+h)-u(t))\,dt.
\end{multline}
We know that $H_u\in L^2_{loc}(\R_+,L^2)$, $E(u)\in L^1_{loc}(\R_+)$ and $u\in W^{1,2}(\R_+,L^2)$, therefore, we may pass to the limit $h\to0$ in \eqref{3.hh} and get that, for almost all $S,T\in\R_+$, $S\le T$, we have the integral identity
\begin{equation}\label{3.a-en}
E(u(T))-E(u(S))=\int_S^T(H_u(t),\Dt u(t))\,dt.
\end{equation}
To finish the proof of the theorem, we need to remove the condition that \eqref{3.a-en} holds for {\it almost all} $S\le T$ only. To this end, let us assume that the energy {\it inequality}
\begin{equation}\label{3.e-in}
E(u(T))-E(u(S))\le\int_S^T(H_u(t),\Dt u(t))\,dt.
\end{equation}
is already verified for {\it all}  $0\le S<T$. We recall that $u(t)$ is weakly con\-ti\-nu\-ous as a function of time with values in $\Phi$, therefore the values of $E(u(t))$ are well-defined for all $t\in\R_+$. Moreover, since the function $u\to E(u)$ is convex, we have
\begin{equation}\label{3.con}
E(u(t))\le \liminf_{s\to t}E(u(s))
\end{equation}
for all $t\in\R_+$. In particular, this property, together with \eqref{3.e-in} imply that the function $t\to E(u(t))$  is continuous from the right for all $t\in\R_+$. In turn, this ensures us that the energy equality \eqref{3.a-en} holds for all $0\le S<T$. Indeed, since \eqref{3.a-en} holds almost everywhere, we may find sequences $S_n\to S$, $S_n\ge S$, and $T_n\to T$, $T_n\ge T$ such that \eqref{3.a-en} holds for
$S_n$ and $T_n$ for all $n$. Passing to the limit $n\to\infty$, we see that it holds for $S$ and $T$ as well. This finishes the proof of the theorem since the energy equality \eqref{3.a-en}, which holds for  all $S$ and $T$ is equivalent to the absolute continuity of the function $t\to E(u(t))$ and equality \eqref{3.en}.
\par
Thus, we only need to verify the energy inequality \eqref{3.e-in}. Moreover, due to the uniqueness of a solution for problem \eqref{2.BF}, it is sufficient to verify it for $S=0$ only (the general case will follow just by replacing the initial time $t=0$ by $t=S$).  To do this, we approximate the initial data $u_0\in\Phi$ and the external force $g\in L^2_{loc}(\R_+,L^2)$ by smooth functions $u_0^n$ and $g_n$ respectively. Let $u_n(t)$ be the corresponding solutions of \eqref{2.BF}. Then, analogously to the end of the proof of Theorem \eqref{Th2.main}, on the one hand, the solutions $u_n(t)\in C_{loc}(\R_+,H^2)$ and therefore, the energy equality \eqref{3.a-en} holds for them for every $T\ge0$, i.e.
\begin{equation}\label{3.a-enn}
E(u_n(T))-E(u_n(0))=\int_0^T(H_{u_n}(t),\Dt {u_n}(t))\,dt.
\end{equation}
On the other hand, the solutions $u_n$ satisfy the key estimates obtained in section \ref{s2} uniformly with respect to $n$. These estimates guarantee that, in particular, $u_n(t)\to u(t)$ weakly in $\Phi$ for all $t\ge0$ and, by the choice of the initial data, we have the strong convergence at $t=0$. Therefore, without loss of generality we have
$$
E(u(0))=\lim_{n\to\infty}E(u_n(0)),\ \ E(u(t))\le\lim_{n\to\infty}E(u_n(t))
$$
for all $t\in\R$. In order to pass to the limit in the term containing $H_{u_n}$, we note that we also know that $\Dt u_n\to \Dt u$ weakly in $L^2_{loc}(\R_+,L^2)$ and, therefore,
$$
\int_S^T\|\Dt u(t)\|^2_{L^2}\,dt\le \lim_{n\to\infty} \int_S^T\|\Dt u_{n}(t)\|^2_{L^2}\,dt.
$$
Taking into account that $g_n\to g$ strongly in $L^2_{loc}(\R_+,L^2)$, it only remains to verify that
 \begin{equation}\label{3.st-c}
 (u_n,\Nx)u_n\to (u,\Nx)u\ \ \ \text{strongly in}\ \ \ L^2_{loc}(\R_+,L^2).
 \end{equation}
 Indeed, due to \eqref{2.imp}, the functions $u_n$ are uniformly bounded in the space $L^2_{loc}(\R_+,W^{1,3q})$ and their time derivatives are uniformly bounded in the space  $L^2_{loc}(\R_+,L^2)$ (due to Theorem \ref{Th2.main}). Therefore, without loss of generality, we have the strong convergence of $u_n$ in $L^2_{loc}(\R_+,W^{1-\eb,3q})$ for all $\eb>0$. Since $W^{1-\eb,3q}\subset L^\infty$ if $\eb>0$ is small enough, we have verified that $u_n\to u$ strongly in $L^2_{loc}(\R_+,L^\infty)$. Using also that $u_n$ is bounded in $L^{8/3}_{loc}(\R_+,L^\infty)$ (again by \eqref{2.imp}), we conclude that
 \begin{equation}\label{3.r1}
 u_n\to u,\ \ \ \text{strongly in}\ \ \ L^{r_1}_{loc}(\R_+,L^\infty)
 \end{equation}
 for all $2<r_1<\frac83$. We now turn to the sequence $\Nx u_n$. According to estimate \eqref{2.en} and the compactness lemma, we conclude that $\Nx u_n\to\Nx u$ strongly in $L^q_{loc}(\R_+,W^{1-\eb,q})$ for all $\eb>0$. Combining this result with the uniform boundedness of  $\Nx u_n$ in $L^2_{loc}(\R_+,L^{3q})$ (due to \eqref{2.imp}) and using that $3q>2$, we end up with the strong convergence
 \begin{equation}\label{3.r2}
 \Nx u_n\to \Nx u,\ \ \ \text{strongly in}\ \ \ L^{r_2}_{loc}(\R_+,L^2)
 \end{equation}
  for all $1<r_2<2$. Moreover, combining this result with the uniform boundedness of the sequence $\Nx u_n$ in $L^\infty(\R_+,L^2)$, we see that the convergence \eqref{3.r2} holds for all $1\le r_2<\infty$. Fixing finally the exponents $r_1$ and $r_2$ in such a way that $\frac1{r_1}+\frac1{r_2}=2$ and using the convergences \eqref{3.r1} and \eqref{3.r2}, we end up with the desired convergence \eqref{3.st-c}. This convergence allows us to pass to the limit $n\to\infty$ in the energy equality \eqref{3.a-enn} and get the desired inequality \eqref{3.e-in}. Thus, the theorem is proved.
\end{proof}
We conclude this section by verifying an interesting fact about the solution $u(t)$ of \eqref{2.BF}, which is also based on the convexity arguments and which is important for what follows in the next section.
\begin{corollary}\label{Cor3.cont} Let the assumptions of Theorem \ref{Th2.main} hold. Then the solution $u(t)$ of the Brinkmann-Forchheimer equation is continuous in time in the strong topology of $\Phi$.
\end{corollary}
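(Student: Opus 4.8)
The plan is to upgrade the already available weak continuity of $u(\cdot)$ in $\Phi$ to strong continuity, using the continuity of the energy $t\mapsto E(u(t))$ established in Theorem \ref{Th3.main} together with a uniform convexity argument. First I would reduce to a convex potential by setting $\tilde F(u):=F(u)+\tfrac L2|u|^2$, so that $\nabla^2_u\tilde F(u)=f'(u)+L\I\ge\kappa|u|^{r-1}\I\ge0$ by \eqref{2.f}, with correspondingly $\tilde E(u):=\tfrac12\|\Nx u\|^2_{L^2}+(\tilde F(u),1)=E(u)+\tfrac L2\|u\|^2_{L^2}$. Since $u$ is weakly continuous with values in $\Phi$ and $\Phi\hookrightarrow\hookrightarrow\H$, the map $t\mapsto u(t)$ is in fact \emph{strongly} continuous into $\H$ (a weakly convergent, hence bounded, sequence has a strongly $\H$-convergent subsequence, and the limit is forced). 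Thus $\tfrac L2\|u(t)\|^2_{L^2}$ is continuous and so is $t\mapsto\tilde E(u(t))$.

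Next I would fix $t_0$ and an arbitrary sequence $t_n\to t_0$. Weak continuity in $\Phi$ gives $\Nx u(t_n)\rightharpoonup\Nx u(t_0)$ in $L^2$ and $u(t_n)\rightharpoonup u(t_0)$ in $L^{r+1}$. The two summands $\tfrac12\|\Nx\cdot\|^2_{L^2}$ and $(\tilde F(\cdot),1)$ of $\tilde E$ are both weakly lower semicontinuous (the second because $\tilde F$ is convex with the growth \eqref{2.F}), while their sum converges, $\tilde E(u(t_n))\to\tilde E(u(t_0))$. A two-term $\liminf$ argument then forces each summand to converge separately, so that $\|\Nx u(t_n)\|_{L^2}\to\|\Nx u(t_0)\|_{L^2}$ and $(\tilde F(u(t_n)),1)\to(\tilde F(u(t_0)),1)$.

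Convergence of the gradient $L^2$-norm, combined with the weak convergence, yields $\Nx u(t_n)\to\Nx u(t_0)$ strongly in $L^2$ by the Radon--Riesz property of the Hilbert space $L^2$, which settles the $\V$-component. For the $L^{r+1}$-component I would exploit the power-type uniform convexity of $\tilde F$. Since $\nabla^2_u\tilde F(u)\ge\kappa|u|^{r-1}\I$ while $\nabla^2_u(|u|^{r+1})\le r(r+1)|u|^{r-1}\I$, the function $u\mapsto\tilde F(u)-c|u|^{r+1}$ is convex for $c>0$ small enough; hence the convexity gap of $\tilde F$ dominates that of $|\cdot|^{r+1}$, and by Clarkson's inequality $\tfrac12\tilde F(a)+\tfrac12\tilde F(b)-\tilde F(\tfrac{a+b}2)\ge c\,2^{-(r+1)}|a-b|^{r+1}$. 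Integrating this pointwise bound with $a=u(t_n)$, $b=u(t_0)$, the right-hand side controls $\|u(t_n)-u(t_0)\|^{r+1}_{L^{r+1}}$, while the integrated left-hand side tends to zero: at the midpoint $w_n:=\tfrac12(u(t_n)+u(t_0))$, midpoint convexity gives $(\tilde F(w_n),1)\le\tfrac12(\tilde F(u(t_n)),1)+\tfrac12(\tilde F(u(t_0)),1)\to(\tilde F(u(t_0)),1)$, and weak lower semicontinuity gives the matching lower bound, so $(\tilde F(w_n),1)\to(\tilde F(u(t_0)),1)$ and the whole gap vanishes.

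Consequently $u(t_n)\to u(t_0)$ strongly in $L^{r+1}$, and together with the $\V$-convergence this gives strong convergence in $\Phi=\V\cap L^{r+1}$; as $t_0$ and the sequence were arbitrary, $u\in C(\R_+,\Phi)$. The main obstacle is precisely the $L^{r+1}$-component, since $F$ is not a pure power and $\Phi$ is not a Hilbert space, so the Radon--Riesz trick does not apply verbatim; the decisive point is that the uniform convexity of order $r+1$ of $\tilde F$ survives the lower-order perturbations encoded in \eqref{2.f}, which is what reduces the desired norm convergence to the vanishing of the convexity gap. All the ingredients—weak continuity in $\Phi$, the compact embedding $\Phi\hookrightarrow\hookrightarrow\H$, and continuity of $E(u(t))$—are supplied by Theorems \ref{Th2.main} and \ref{Th3.main}.
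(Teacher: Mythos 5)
Your proposal is correct, and it takes a genuinely different route from the paper's, although both are convexity arguments resting on the same two inputs: weak $\Phi$-continuity of $t\mapsto u(t)$ and convergence of the energy along $t_n\to t$, supplied by Theorem \ref{Th3.main}. The paper proves a single quantitative uniform-convexity inequality, \eqref{3.uc},
\begin{equation*}
E(v)-E(u)\ge(\Nx u,\Nx v-\Nx u)+(f(u),v-u)+\alpha_3\|u-v\|^2_\Phi,
\end{equation*}
obtained from the Taylor expansion of $F$ and the pointwise lower bound $\int_0^1|sv+(1-s)u|^{r-1}\,ds\ge\alpha_1|u-v|^{r-1}$; taking $u=u(t)$, $v=u(t_n)$, the first-order terms on the right-hand side of \eqref{3.w-s} vanish by weak convergence (this step pairs $f(u(t))\in L^q$ against the weak $L^{r+1}$-convergence, which is available from \eqref{2.f}), and strong $\Phi$-convergence follows in one stroke. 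You instead (i) reduce to the convex potential $\tilde F=F+\frac L2|u|^2$, restoring continuity of the modified energy via strong $\H$-continuity — this makes explicit the paper's ``without loss of generality $f'(u)\ge\kappa|u|^{r-1}$'', and your compactness argument can even be shortcut, since $\Dt u\in L^2_{loc}(\R_+,\H)$ already gives $u\in C(\R_+,\H)$; (ii) split $\tilde E$ into its two weakly lower semicontinuous summands and force each to converge separately by the two-term $\liminf$ trick; (iii) settle the $\V$-component by Radon--Riesz (trivial in Hilbert space) and the $L^{r+1}$-component by a midpoint convexity gap, where convexity of $\tilde F-c|\cdot|^{r+1}$ for small $c>0$ (correct, since the top eigenvalue of $\nabla^2_u|u|^{r+1}$ is $r(r+1)|u|^{r-1}$) combines with the pointwise Clarkson-type bound $\frac{|a|^p+|b|^p}2-\bigl|\frac{a+b}2\bigr|^p\ge2^{-p}|a-b|^p$, valid for Euclidean vectors and $p=r+1\ge2$; your weak-l.s.c. argument for the midpoints $w_n\rightharpoondown u(t_0)$ correctly closes the gap. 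All steps check out. The trade-off: the paper's route yields the explicit estimate \eqref{3.w-s} of the $\Phi$-distance by the energy gap plus linear terms — a quantitative modulus of uniform convexity of $E$ — whereas your argument is softer and more modular (it never pairs $f(u(t))$ against weak limits, and it isolates which component of the energy needs which tool) but produces no such modulus; since the proof of Theorem \ref{Th4.main} only reuses the implication ``energy convergence plus weak convergence implies strong convergence'', your version would serve equally well in \S\ref{s4}.
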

\begin{proof} This fact is a standard corollary of the uniform convexity of the function $u\to E(u)$ as the function from $\Phi$ to $\R$ (without loss of generality, we assume that $f'(u)\ge\kappa|u|^{r-1}$ which gives us the desired uniform convexity) and the proved energy equality. Namely, let $t_n\to t$, $t_n\ge0$, be an arbitrary sequence of times. Then, due to the weak continuity of $u(t)$ and the energy equality, we have
\begin{equation}\label{3.good-c}
E(u(t))=\lim_{n\to\infty}E(u(t_n))  \ \ \text{and} \ \ u(t_n)\rightharpoondown u(t) \ \text{in}\ \Phi.
\end{equation}
Since $u\to E(u)$ is uniformly convex, the above convergence imply that $u(t_n)\to u(t)$ strongly in $\Phi$. Although this fact seems to be well-known, see e.g. \cite{Zal83,ZhiP14}, for the convenience of the reader, we present its proof below.
\par
For the quadratic part of the functional $E(u)$, we will use the following obvious identity:
$$
\|\Nx v\|^2_{L^2}-\|\Nx u\|^2_{L^2}=2(\Nx u,\Nx v-\Nx u)+\|\Nx u-\Nx v\|^2_{L^2}.
$$
The analogue of this identity for the nonlinearity $F$ follows from the Taylor expansions near the point $u$, namely,
$$
F(v)-F(u)=f(u).(v-u)+\frac12 \int_0^1(1-s)f'(sv+(1-s)u)\,ds(v-u).(v-u).
$$
It is not difficult to verify that
$$
\int_0^1|su+(1-s)v|^{r-1}\,ds\ge\alpha(|u|^{r-1}+|v|^{r-1})\ge\alpha_1|u-v|^{r-1}
$$
for some positive $\alpha$ and $\alpha_1$. Therefore, putting $u=u(x)$, $v=v(x)$ in the above Taylor's expansion of the nonlinearity $F$  and integrating it with respect to $x$, we get  the inequality
$$
(F(v),1)-(F(u),1)\ge (f(u),v-u)+\frac{\kappa\alpha_1}2\|u-v\|_{L^{r+1}}^{r+1},
$$
which holds for every $u,v\in L^{r+1}(\Omega)$. Combining this inequality with the identity for the quadratic part of $E_u$ mentioned above, we end up with the key inequality
\begin{multline}\label{3.uc}
E(v)-E(u)\ge (\Nx u,\Nx v-\Nx u)+(f(u),v-u)+\\+\frac12\|\Nx u-\Nx v\|^2_{L^2}+\frac{\kappa\alpha_2}2\|u-v\|_{L^{r+1}}^{r+1}\ge\\\ge (\Nx u,\Nx v-\Nx u)+(f(u),v-u)+\alpha_3\|u-v\|^2_\Phi,
\end{multline}
where $\alpha_3$ is a positive constant. This inequality gives us the uniform convexity of $E(u)$ as well as the desired continuity of $u(t)$. Indeed, let us take $u=u(t)$ and $v=u(t_n)$ in this inequality. Then, we have
\begin{multline}\label{3.w-s}
\|u(t_n)-u(t)\|_\Phi^2\le\alpha_3^{-1}\(E(u(t_n))-E(u(t))-\right.\\-\left.(\Nx u(t),\Nx u(t_n)-\Nx u(t))-(f(u(t)),u(t_n)-u(t))\).
\end{multline}
It only remains to note that the convergence \eqref{3.good-c} implies that the right hand side of \eqref{3.w-s} tends to zero as $n\to\infty$, therefore, $u(t_n)\to u(t)$ strongly in $\Phi$. This finishes the proof of the corollary.
\end{proof}

\section{Attractors}\label{s4}

In this section, we will study the long-time behaviour of solutions of problem \eqref{2.BF}. Since the considered equation depends explicitly on time, we will construct a uniform attractor for the cocycle associated with \eqref{2.BF}. First of all, in order to get the dissipativity, we assume that the right-hand side $g(t)$ is in a sense bounded as $t\to\infty$, namely, we assume that
\begin{equation}\label{4.g-bound}
\|g\|_{L^2_b}:=\sup_{h\in\R}\|g\|_{L^2((h,h+1)\times\Omega)}<\infty.
\end{equation}
Following the general theory, we introduce the hull $\Cal H(g)$ via
\begin{equation}\label{4.hul}
\Cal H(g):=\bigg[T(h)g,\ h\in\R\bigg]_{L^{2,w}_{loc}(\R,L^2(\Omega))},
\end{equation}
where $(T(h)g):=g(t+h)$ is a group of temporal shifts and $L^{2,w}_{loc}(\R,L^2(\Omega))$ means the space $L^2_{loc}(\R,L^2(\Omega))$ endowed with the weak topology and $[\cdot]_W$ stands for the closure in the space $W$. Then, due to assumption \eqref{4.g-bound} and the Banach-Alaoglu theorem, the hull $\Cal H(g)$ is compact in $L^{2,w}_{loc}(\R,L^2(\Omega))$ and, obviously, is shift invariant
\begin{equation}\label{4.inv}
T(h)\Cal H(g)=\Cal H(g),\ \ h\in\R.
\end{equation}
From now on we endow the hull $\Cal H(g)$ with the weak topology of the space $L^{2}_{loc}(\R,L^2(\Omega))$, so $\Cal H(g)$ is a compact metrizable topological space.
In order to construct a cocycle associated with problem \eqref{2.BF}, we will consider a family of similar problems with all right-hand sides $\xi(t)$ belonging to the hull $\Cal H(g)$ of the initial right-hand side $g(t)$, namely,
\begin{equation}\label{4.BF}
\Dt u+(u,\Nx)u+\Nx p+f(u)=\Dx u+\xi(t),\  \divv u=0,\ u\big|_{t=0}=u_0,
\end{equation}
where $u_0\in\Phi$ and $\xi\in\Cal H(g)$. Let $\Cal S_\xi(t):\Phi\to\Phi$ be the solution operator of this problem, i.e.
$$
\Cal S_\xi(t)u_0:=u(t),
$$
where $u(t)$ is a unique solution of problem \eqref{4.BF}, which is well-defined due to Theorem \ref{Th2.main}. Then, the operators $\Cal S_\xi(t)$ generate a cocycle in the phase space $\Phi$:
\begin{equation}\label{4.co}
\Cal S_\xi(t_1+t_2)=\Cal S_{T(t_2)\xi}(t_1)\circ\Cal S_\xi(t_2),\ \ \xi\in\Cal H(g),\ t_1,t_2\ge0,
\end{equation}
which is a natural generalization of a solution semigroup to the non-auto\-no\-mo\-us case (see \cite{BV92,CLR12,ChVi02,KlR11,tem,Zel22} and references therein) and which is our main object to study in this section. Recall also that the cocycle property \eqref{4.co} allows us to reduce the considered cocycle to a semigroup, acting in the extended phase space $\Bbb F:=\Phi\times\Cal H(g)$ via
\begin{equation}\label{4.sem}
\Bbb S(t):\Bbb F\to\Bbb F,\ \ \Bbb S(t)(u_0,\xi):=(\Cal S_\xi(t)u_0,T(t)\xi),
\end{equation}
so we may reduce the study of the non-autonomous dynamical system generated by cocycle $\Cal S_\xi(t)$ to the autonomous dynamical system acting on the extended phase space $\Bbb F$. This naturally leads to the uniform attractor. We recall below the basic concepts of the attractor theory adapted to our case.
\begin{definition} A set $B\subset \Phi$ is bounded if $\sup_{u_0\in B}\|u_0\|_{\Phi}<\infty$. A set $\Bbb B\subset\Bbb F$ is bounded if $\Pi_1\Bbb B$ is bounded in $\Phi$. Here and below $\Pi_1:\Bbb F\to\Phi$ is the projector to the first component of the Cartesian product $\Phi\times\Cal H(g)$.
\par
A set $\Cal B$ is a (uniformly) absorbing set for the cocycle $\Cal S_\xi(t)$ if, for any bounded set $B\subset\Phi$, there is a time moment $T=T(B)$ such that
$$
\Cal S_\xi(t)B\subset\Cal B,\ \ \forall\xi\in\Cal H(g),\ \forall t\ge T.
$$
Analogously, a set $\Cal B$ is a uniformly attracting set for the cocycle $\Cal S_\xi(t)$ in $\Phi$ endowed with a suitable topology if, for any bounded set $B\subset\Phi$ and any neighbourhood $\Cal O(\Cal B)$, there exists $T=T(B,\Cal O)$ such that
$$
\Cal S_\xi(t)B\subset\Cal O(\Cal B),\ \ \forall\xi\in\Cal H(g),\ \forall t\ge T.
$$
A set $\Cal A_{un}$ is a uniform attractor for the cocycle $\Cal S_\xi(t)$ if
\par
1) The set $\Cal A_{un}$ is compact in $\Phi$;
\par
2) The set $\Cal A_{un}$  is an attracting set for the cocycle $\Cal S_\xi(t)$;
\par
3) The set $\Cal A_{un}$ is a minimal (by inclusion) set which satisfies properties 1) and 2).
\par
We will consider two choices of the topology in $\Phi$, namely, weak and strong topologies. The corresponding attractors $\Cal A_{un}^w$ and $\Cal A_{un}^s$ will be referred as weak and strong uniform attractors respectively.
\end{definition}
In order to describe the structure of the uniform attractor, we need one more standard definition.

\begin{definition} A function $u:\R\to\Phi$ is a complete bounded trajectory of the cocycle $\Cal S_\xi(t)$ which corresponds to the symbol $\xi\in\Cal H(g)$ if $\sup_{t\in\R}\|u(t)\|_\Phi<\infty$ and
\begin{equation}\label{4.sol}
u(t+h)=\Cal S_{T(t)\xi}(h)u(t),\ \ t\in\R,\ \ h\in\R_+.
\end{equation}
The set of all bounded trajectories of the cocycle $\Cal S_\xi(t)$ which correspond to the symbol $\xi\in\Cal H(g)$ is denoted by $\Cal K_\xi$. It is not difficult to see that a bounded $\Phi$-valued function $u(t)$, $t\in\R$, belongs to $\Cal K_\xi$ if and only if it solves equation \eqref{4.BF} for all $t\in\R$, so the set $\Cal K_\xi\subset C(\R,\Phi)$ is nothing else than the set of all complete bounded solutions of problem \eqref{4.BF} which correspond to the right-hand side $\xi\in\Cal H(g)$.
\end{definition}
We will use the following standard criterion to verify the existence of a uniform attractor.
\begin{proposition}\label{Prop4.crit} Let $\Cal S_\xi(t):\Phi\to\Phi$, $\xi\in\Cal H(g)$ be a cocycle in the phase space $\Phi$ and let $\Cal B$ be a bounded uniformly absorbing set for $\Cal S_\xi(t)$. Then, this cocycle possesses a weak uniform attractor $\Cal A_{un}^w$. If, in addition, the map $(u_0,\xi)\to\Cal S_\xi(t)u_0$ is continuous in a weak topology for any fixed $t\ge0$, then the attractor $\Cal A_{un}^w$ is generated by all complete bounded trajectories of the considered cocycle, namely,
\begin{equation}\label{4.str}
\Cal A_{un}^w=\cup_{\xi\in\Cal H(g)}\Cal K_\xi\big|_{t=0}.
\end{equation}
Finally, if the uniform absorbing set is compact in the strong topology of $\Phi$, then the strong uniform attractor $\Cal A_{un}^s$ exists and coincides with the weak uniform attractor $\Cal A_{un}^w$:
\begin{equation}
\Cal A_{un}:=\Cal A_{un}^s=\Cal A_{un}^w.
\end{equation}
\end{proposition}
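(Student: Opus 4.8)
The plan is to reduce the non-autonomous problem to the autonomous semigroup $\Bbb S(t)$ acting on the extended phase space $\Bbb F=\Phi\times\Cal H(g)$ introduced in \eqref{4.sem}, and to build a global attractor for $\Bbb S(t)$ whose projection to the first factor is the sought uniform attractor. Throughout I equip $\Phi$ with its weak topology and $\Cal H(g)$ with the compact metrizable topology fixed above, and I denote by $\Theta$ the resulting product topology on $\Bbb F$. The key preliminary observation is that, since $\Phi=\Cal V\cap L^{r+1}(\Omega)$ is a separable reflexive Banach space, bounded subsets of $\Phi$ are weakly precompact and the weak topology restricted to any such bounded set is metrizable; combined with the compactness of $\Cal H(g)$, this makes $\Theta$ metrizable on bounded subsets of $\Bbb F$, which is what legitimizes the sequential $\omega$-limit set arguments below. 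Setting $\Bbb B:=\overline{\Cal B}\times\Cal H(g)$, with the closure taken in the weak topology of $\Phi$, the absorbing property of $\Cal B$ together with the shift-invariance \eqref{4.inv} shows that $\Bbb B$ is a $\Theta$-compact, $\Bbb S(t)$-absorbing set in $\Bbb F$.

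For the first assertion I would define the uniform $\omega$-limit set
\[
\Bbb A:=\bigcap_{s\ge0}\overline{\bigcup_{t\ge s}\Bbb S(t)\Bbb B}^{\,\Theta},\qquad \Cal A_{un}^w:=\Pi_1\Bbb A.
\]
Using the $\Theta$-compactness and metrizability of $\Bbb B$, a standard argument gives that $\Bbb A$ is nonempty, $\Theta$-compact and attracts $\Bbb B$ (hence every bounded set, by the absorbing property) in the topology $\Theta$; applying the continuous projection $\Pi_1$ shows that $\Cal A_{un}^w$ is weakly compact and uniformly attracting in the weak topology of $\Phi$. Minimality of $\Cal A_{un}^w$ is inherited from the minimality of $\Bbb A$ among $\Theta$-compact attracting sets. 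Note that at this stage no continuity of the cocycle is required, only compactness.

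Assume now in addition that $(u_0,\xi)\mapsto\Cal S_\xi(t)u_0$ is weakly continuous. Together with the continuity of the shift $T(t)$ on the hull, this means that $\Bbb S(t):\Bbb F\to\Bbb F$ is $\Theta$-continuous for every fixed $t\ge0$, so $\Bbb A$ becomes the genuine global attractor of the continuous semigroup $\Bbb S(t)$ and is strictly invariant, $\Bbb S(t)\Bbb A=\Bbb A$. The classical description of the global attractor of a continuous semigroup as the union of all its complete bounded trajectories then applies: every $(u_0,\xi)\in\Bbb A$ lies on a complete bounded trajectory $t\mapsto(u(t),T(t)\xi)$ of $\Bbb S(t)$, and by the cocycle identity \eqref{4.co} these are exactly the pairs with $u\in\Cal K_\xi$ and $u(0)=u_0$. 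Projecting onto $\Phi$ yields the representation \eqref{4.str}. This is the genuinely delicate step: upgrading the merely attracting $\omega$-limit set to a strictly invariant object and extending each trajectory backward to $t<0$ is where the weak continuity hypothesis is indispensable, since without it the weak attractor need not carry any invariance at all.

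Finally, suppose $\Cal B$ is compact in the strong topology of $\Phi$. A strongly compact set is weakly compact, hence weakly closed, so $\overline{\Cal B}=\Cal B$; moreover, since the identity map $(\Cal B,\mathrm{strong})\to(\Cal B,\mathrm{weak})$ is a continuous bijection from a compact space to a Hausdorff space, it is a homeomorphism, i.e.\ the strong and weak topologies coincide on $\Cal B$. By the argument of the second paragraph $\Cal A_{un}^w\subset\Pi_1\Bbb B=\Cal B$, so $\Cal A_{un}^w$ is in fact strongly compact and the weak attraction established above is automatically strong attraction on $\Cal B$; minimality persists, so the strong uniform attractor $\Cal A_{un}^s$ exists and coincides with $\Cal A_{un}^w$. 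In contrast to the representation step, this strong-topology upgrade and the existence part are soft consequences of compactness once the extended-semigroup framework and the metrizability of the weak topology on bounded sets are in place.
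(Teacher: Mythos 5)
Your proposal is correct and follows essentially the same route the paper takes: the paper does not prove Proposition \ref{Prop4.crit} in detail but defers to \cite{ChVi02,Zel22}, explicitly indicating exactly your scheme --- reflexivity of $\Phi$ making bounded sets weakly precompact (the paper takes the closed convex hull where you take the weak closure, an immaterial difference), reduction to the extended semigroup $\Bbb S(t)$ on $\Bbb F=\Phi\times\Cal H(g)$, and recovery of the uniform attractor as $\Cal A_{un}=\Pi_1\Bbb A$. Your fleshing out of the standard steps (metrizability of the weak topology on bounded sets via separability, the $\omega$-limit construction without continuity, strict invariance and the trajectory representation under weak continuity, and the coincidence of weak and strong topologies on a strongly compact absorbing set) is accurate and consistent with the cited references.
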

The proof of this proposition in a more general setting can be found in \cite{ChVi02}, see also \cite{Zel22}. We mention here that since $\Phi$ is a reflexive Banach space, bounded sets in it are precompact in a weak topology, so if we are given a bounded absorbing/attracting set, its closed convex hull will be a compact absorbing/attracting set, so the standard asymptotic compactness condition is satisfied. We also mention that the uniform attractor $\Cal A_{un}$ is related with the global attractor $\Bbb A$ of the extended semigroup $\Bbb S(t)$ via
$$
\Cal A_{un}=\Pi_1\Bbb A.
$$
We are now ready to study the cocycle generated by the Navier-Stokes-Brinkmann-Forchheimer equation \eqref{4.BF}. We start with the case of the weak uniform attractor.

\begin{theorem}\label{Th4.weak} Let the assumptions of Theorem \ref{Th2.main} hold and let, in addition, assumption \eqref{4.g-bound} be satisfied. Then the cocycle $\Cal S_\xi(t):\Phi\to\Phi$ possesses a weak uniform attractor $\Cal A_{un}^w$ in the phase space $\Phi$ and this attractor is generated by all complete bounded trajectories, i.e. the representation formula \eqref{4.str} holds.
\end{theorem}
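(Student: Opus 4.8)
The plan is to verify the two hypotheses of the abstract criterion in Proposition \ref{Prop4.crit}. The cocycle structure \eqref{4.co} of the solution operators $\Cal S_\xi(t):\Phi\to\Phi$ is already guaranteed by the global well-posedness of \eqref{4.BF} established in Theorem \ref{Th2.main}, so only two things remain to be checked: (i) the existence of a bounded uniformly absorbing set $\Cal B\subset\Phi$, which gives the weak uniform attractor $\Cal A_{un}^w$ via the first part of Proposition \ref{Prop4.crit}; and (ii) the weak continuity of the map $(u_0,\xi)\mapsto\Cal S_\xi(t)u_0$ for each fixed $t\ge0$, which upgrades this to the representation formula \eqref{4.str}.

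First I would construct the absorbing set. Every symbol $\xi\in\Cal H(g)$ is a weak limit in $L^{2,w}_{loc}(\R,L^2)$ of temporal shifts $T(h)g$, and since each shift satisfies $\|T(h)g\|_{L^2_b}=\|g\|_{L^2_b}$ while the $L^2_b$-norm is lower semicontinuous with respect to this weak convergence, one obtains the uniform bound $\|\xi\|_{L^2_b}\le\|g\|_{L^2_b}$ for all $\xi\in\Cal H(g)$. Inserting this into the dissipative estimate \eqref{2.main} applied to problem \eqref{4.BF} gives, uniformly in $\xi\in\Cal H(g)$,
\[
\|\Cal S_\xi(t)u_0\|_\Phi^2\le Q\(\|u_0\|_\Phi^2 e^{-\beta t}+C\|g\|_{L^2_b}^2\),
\]
so that a ball of radius $\sim Q(C\|g\|_{L^2_b}^2)$ in $\Phi$ is a bounded uniformly absorbing set $\Cal B$. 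Since $\Phi$ is a reflexive Banach space, bounded sets are weakly precompact and (as noted after Proposition \ref{Prop4.crit}) the closed convex hull of $\Cal B$ is a weakly compact uniformly absorbing, hence attracting, set; the first part of Proposition \ref{Prop4.crit} then yields the weak uniform attractor $\Cal A_{un}^w$.

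The main work is the weak continuity of the solution map. Here I would take sequences $u_0^n\rightharpoondown u_0$ in $\Phi$ and $\xi_n\to\xi$ in $\Cal H(g)$, set $u_n(t):=\Cal S_{\xi_n}(t)u_0^n$, and pass to the limit. The uniform estimates of Theorem \ref{Th2.main} and Corollary \ref{Cor2.reg} bound $u_n$ in $L^\infty_{loc}(\R_+,\Phi)$, $\Dt u_n$ in $L^2_{loc}(\R_+,L^2)$, and $u_n$ in $L^2_{loc}(\R_+,W^{1,3q})\cap L^{8/3}_{loc}(\R_+,L^\infty)$, all uniformly in $n$. Passing to a subsequence and arguing exactly as in the proof of Theorem \ref{Th3.main} (the convergences \eqref{3.r1}--\eqref{3.r2}), the Aubin-Lions compactness lemma gives strong convergence $u_n\to u$ in $L^{r_1}_{loc}(\R_+,L^\infty)$ and $\Nx u_n\to\Nx u$ in $L^{r_2}_{loc}(\R_+,L^2)$ with $\frac1{r_1}+\frac1{r_2}=2$, which suffices to pass to the limit strongly in the inertial term $(u_n,\Nx)u_n$ in $L^2_{loc}(\R_+,L^2)$ and, using $f\in C^1$ together with \eqref{2.f}, in the Forchheimer term $f(u_n)$. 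Since $\xi_n\to\xi$ weakly, the limit $u$ is then a weak solution of \eqref{4.BF} with symbol $\xi$ and datum $u_0$, whence $u=\Cal S_\xi(\cdot)u_0$ by uniqueness. Finally, the uniform $\Phi$-bound combined with the control of $\Dt u_n$ in $L^2_{loc}(\R_+,L^2)$ gives weak-in-$\Phi$ equicontinuity in time, so that $u_n(t)\rightharpoondown u(t)$ in $\Phi$ for every fixed $t\ge0$; this is precisely the required weak continuity, and the second part of Proposition \ref{Prop4.crit} delivers \eqref{4.str}.

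I expect step (ii), and specifically the passage to the limit in $(u_n,\Nx)u_n$, to be the main obstacle: weak $\Phi$-convergence of the data alone does not control the quadratic nonlinearity, and it is only the extra Strichartz-type integrability $u_n\in L^{8/3}_{loc}(\R_+,L^\infty)$ and $\Nx u_n\in L^2_{loc}(\R_+,L^{3q})$ from Corollary \ref{Cor2.reg} that upgrades weak to strong convergence of the relevant factors. The pointwise-in-time weak convergence in $\Phi$ also requires some care, but follows routinely from the uniform time-derivative bound once the limit equation has been identified.
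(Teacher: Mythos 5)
Your proposal is correct and follows essentially the same route as the paper: the uniform bound $\|\xi\|_{L^2_b}\le\|g\|_{L^2_b}$ on the hull combined with estimate \eqref{2.main} yields the bounded uniformly absorbing ball \eqref{4.abs}, and Proposition \ref{Prop4.crit} then gives both the weak uniform attractor and, via weak continuity of $(u_0,\xi)\mapsto\Cal S_\xi(t)u_0$, the representation formula \eqref{4.str}. The only difference is that the paper declares the weak continuity ``standard'' and leaves it to the reader, whereas you carry it out explicitly by the same compactness machinery as in the proof of Theorem \ref{Th3.main} — which is a sound way to fill that gap, modulo the exponent condition, which should read $\frac1{r_1}+\frac1{r_2}=\frac12$ (the relation $\frac1{r_1}+\frac1{r_2}=2$, copied from the paper's own proof of \eqref{3.st-c}, is a typo: it would force $r_2<1$).
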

\begin{proof}The statement of the theorem is an almost immediate corollary of Theorem \ref{Th2.main}. Indeed, since
$$
\|\xi\|_{L^2_b(\R,L^2(\Omega))}\le \|g\|_{L^2_b(\R,L^2(\Omega))},\ \ \forall\xi\in\Cal H(g),
$$
estimate \eqref{2.main} implies that
\begin{equation}\label{4.dis}
\|\Cal S_\xi(t)u_0\|_\Phi\le Q(\|u_0\|_\Phi)e^{-\beta t}+Q(\|g\|_{L^2_b})
\end{equation}
for some monotone increasing function $Q$ and a positive constant $\beta$ which are independent of $u_0\in\Phi$ and $t\in\R_+$. Estimate \eqref{4.dis} guarantees that the set
\begin{equation}\label{4.abs}
\Cal B:=\{u_0\in\Phi,\ \|u_0\|_\Phi\le 2Q(\|g\|_{L^2_b})\}
\end{equation}
is a bounded uniformly attracting set for the cocycle $\Cal S_\xi(t)$. Therefore, the existence of a weak uniform attractor $\Cal A_{un}^w$ follows from Proposition \ref{Prop4.crit}. The weak continuity of the maps $(u_0,\xi)\to\Cal S_\xi(t)u_0$ for every fixed $t$ can be checked in a standard way (we leave the rigorous proof of this fact to the reader). This gives the representation formula \eqref{4.str} and finishes the proof of the theorem.
\end{proof}
\begin{remark} Recall that the cocycle $\Cal S_\xi(t)$ generates a family of dynamical processes $U_\xi(t,\tau):\Phi\to\Phi$, $t\ge\tau\in\R$, $\xi\in\Cal H(g)$, via
$$
U_\xi(t,\tau):=\Cal S_{T(\tau)\xi}(t-\tau).
$$
Then the cocycle property \eqref{4.co} transforms to
$$
U_\xi(t,\tau)=U_\xi(t,s)\circ U_\xi(s,\tau),\ \ t\ge s\ge\tau\in\R,\ \xi\in\Cal H(g).
$$
Moreover, the uniform attractor $\Cal A_{un}^w$ can be defined using the dynamical process $U_g(t,\tau)$ which corresponds to the initial external force $g$ only (without introducing the hull $\Cal H(g)$ and the cocycle $\Cal S_\xi(t)$). Namely, the set $\Cal A_{un}^w$ is  a weak uniform attractor for the process $U_g(t,\tau)$ if
\par
1) $\Cal A_{un}^w$ is a compact set in $\Phi$ endowed with the weak topology;
\par
2) It possesses a uniform attracting property, i.e. for every bounded set $B$ and every neighbourhood $\Cal O(\Cal A_{un})$ (in a weak topology of $\Phi$), there exists $T=T(\Cal O,B)$ such that
$$
U_g(t,\tau)B\subset \Cal O(\Cal A_{un}^w),\ \ \text{ if} t-\tau\ge T;
$$
\par
3) $\Cal A_{un}^w$ is a minimal (by inclusion) set which satisfies properties 1) and 2),
see \cite{ChVi02,Zel22} for the proof of the equivalence of these definitions. However, if we want to present the attractor as a union of complete bounded trajectories, we need to introduce the hull $\Cal H(g)$ and the cocycle $\Cal S_\xi(t)$. For this reason, we prefer to introduce the cocycle formalism from the very beginning.
\end{remark}
Our next aim is to verify that, under some natural extra assumptions, the constructed weak uniform attractor $\Cal A_{un}^w$ is actually a strong one. We first note that, if the right-hand side $g\in L^2_b(\R,L^2(\Omega))$ only, we cannot expect the existence of a {\it strong} uniform attractor in $H^1(\Omega)$, the corresponding counterexamples can be constructed even on the level of a linear Stokes equation, see \cite{Z15}, so some extra assumptions on $g$ are really necessary. Following the general theory developed in \cite{ChVi02}, see also references therein, the straightforward  assumption which can be posed  is the assumption that $g$ is {\it translation compact} in $L^2_b(\R,L^2(\Omega))$ (i.e. that the hull $\Cal H(g)$ is compact in the {\it strong} topology of $L^2_{loc}(\R,L^2(\Omega))$). This assumption covers the cases when $g$ is periodic, quasi or almost periodic in time or $g(t)$ is a heteroclinic orbit between two stationary right-hand sides or $g(t)$ possesses some extra regularity in both space and time, see \cite{ChVi02} for more examples of translation-compact external forces. However, as it was pointed out in \cite{Lu06,LWZ05,Z15}, the translation compactness assumption can be essentially relaxed. For instance, the extra regularity of $g(t)$ only in space (e.g. $g\in L^2_b(\R,H^1(\Omega))$) or in time (e.g. $g\in H^1_b(\R,L^2(\Omega))$) is often enough to get the strong uniform attractor. We introduce below the most general (to the best of our knowledge) class of external forces, for which we may expect the existence of a strong uniform attractor (at least in the case of parabolic PDEs) and verify that this is indeed true for the case of our problem \eqref{4.BF}.

\begin{definition}\label{Def4.w-n} A function $g\in L^2_{b}(\R,L^2(\Omega))$ is weakly normal if for every $\eb>0$ there exists a finite-dimensional subspace $H_\eb\subset L^2(\Omega)$ and a splitting
$$
g=\tilde g_\eb+\bar g_\eb
$$
such that $\tilde g_\eb\in L^2_b(\R,H_\eb)$ and the function $\bar g_\eb$ satisfies the condition
\begin{equation}\label{4.weak}
\limsup_{s\to0}\sup_{t\in\R}\int_t^{t+s}\|\bar g_\eb(\tau)\|^2_{L^2}\,d\tau\le \eb.
\end{equation}
Important for us that the finite-dimensional  spaces $H_\eb$ can be chosen in such a way that $H_\eb\subset C_0^\infty(\Omega)$, see \cite{Z15} for more details. Recall also that the function $g\in L^2_b(\R,L^2(\Omega))$ is called normal if
\begin{equation}\label{4.norm}
\lim_{s\to0}\sup_{t\in\R}\int_t^{t+s}\|g(\tau)\|^2_{L^2}\,d\tau=0.
\end{equation}
It is worth mentioning as well that if $g$ is weakly normal, then any $\xi\in\Cal H(g)$ is also weakly normal, namely, for any $\xi\in\Cal H(g)$, there exist $\tilde\xi_\eb\in\Cal H(\tilde g_\eb)$ and $\bar \xi_\eb\in\Cal H(\bar g_\eb)$ such that $\xi=\tilde\xi_\eb+\bar\xi_\eb$ and the functions $\bar\xi_\eb$ satisfy \eqref{4.weak} uniformly with respect to $\xi\in\Cal H(g)$.
\end{definition}

The next theorem can be considered as the main result of this section.

\begin{theorem}\label{Th4.main} Let the assumptions of Theorem \ref{Th4.weak} hold and let also the right-hand side $g\in L^2_b(\R,L^2(\Omega))$ be weakly normal. Then the cocycle $\Cal S_\xi(t)$ associated with problem \eqref{4.BF} possesses a strong uniform attractor $\Cal A_{un}^s$ which coincides with the weak uniform attractor constructed above.
\end{theorem}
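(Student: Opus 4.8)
The plan is to upgrade the weak uniform attractor $\Cal A_{un}^w$ of Theorem \ref{Th4.weak} to a strong one by establishing strong asymptotic compactness of the cocycle and invoking Proposition \ref{Prop4.crit} together with the representation \eqref{4.str}. Concretely, I would fix sequences $t_n\to+\infty$, $\xi_n\in\Cal H(g)$ and initial data $u_0^n$ bounded in $\Phi$, and prove that $v_n:=\Cal S_{\xi_n}(t_n)u_0^n$ is precompact in the strong topology of $\Phi$. Since $\Cal A_{un}^w=\cup_{\xi}\Cal K_\xi|_{t=0}$ already attracts in the weak topology, this strong compactness forces the weak attractor to be compact and attracting in the strong topology, whence $\Cal A_{un}^s=\Cal A_{un}^w$.

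First I would extract a limiting complete trajectory. By the dissipative estimate \eqref{4.dis} and reflexivity of $\Phi$ we may assume $v_n\rightharpoondown v^*$ weakly in $\Phi$. Shifting time, set $\bar u_n(\tau):=\Cal S_{\xi_n}(t_n+\tau)u_0^n$ and $\eta_n:=T(t_n)\xi_n$; by compactness of the hull we may assume $\eta_n\to\eta$ in $\Cal H(g)$. The uniform bounds of \S\ref{s2} (the dissipative estimate \eqref{2.main} and the improved regularity \eqref{2.imp}) hold for $\bar u_n$ on every interval $[-T,0]$ uniformly in $n$, so, exactly as in the proof of Theorem \ref{Th3.main}, Aubin--Lions compactness yields (along a subsequence) the convergences \eqref{3.r1}, \eqref{3.r2} and \eqref{3.st-c}, and a complete bounded trajectory $u^*\in\Cal K_\eta$ with $u^*(0)=v^*$ and $\bar u_n(\tau)\rightharpoondown u^*(\tau)$ in $\Phi$ for every $\tau$.

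The decisive reduction is that, by the uniform convexity inequality \eqref{3.uc} obtained in Corollary \ref{Cor3.cont}, strong convergence $v_n\to v^*$ in $\Phi$ is equivalent to the convergence of energies $E(v_n)\to E(v^*)$: putting $u=v^*$, $v=v_n$ in \eqref{3.uc}, the two terms linear in $v_n-v^*$ vanish in the weak limit, so $\alpha_3\|v_n-v^*\|_\Phi^2\le E(v_n)-E(v^*)+o(1)$. Since $E$ is convex and weakly lower semicontinuous we automatically have $E(v^*)\le\liminf_n E(v_n)$, so everything reduces to the upper bound $\limsup_n E(v_n)\le E(v^*)$. To obtain it I would write the energy equality of Theorem \ref{Th3.main} for $\bar u_n$ and for $u^*$ on $[-T,0]$, so that $E(v_n)-E(v^*)$ becomes the sum of the boundary difference $E(\bar u_n(-T))-E(u^*(-T))$ and the differences of the three flux integrals. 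The dissipation integral $-\int_{-T}^0\|\Dt\bar u_n\|^2$ is controlled in the right direction by weak lower semicontinuity of the $L^2$-norm of $\Dt\bar u_n$, and the inertial integral $\int_{-T}^0((\bar u_n,\Nx)\bar u_n,\Dt\bar u_n)$ passes to the limit because of the strong convergence \eqref{3.st-c} paired with the weak convergence $\Dt\bar u_n\rightharpoondown\Dt u^*$ in $L^2_{loc}(\R_+,L^2)$.

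The main obstacle is the force integral $\int_{-T}^0(\eta_n,\Dt\bar u_n)\,d\tau$, in which both factors converge only weakly in $L^2$ and therefore cannot be passed to the limit termwise; this is exactly where the weak normality of $g$ (Definition \ref{Def4.w-n}) is indispensable. I would split $\eta_n=\tilde\eta_{n,\eb}+\bar\eta_{n,\eb}$ according to the hull version of the decomposition recorded after Definition \ref{Def4.w-n}. For the finite--dimensional part, after replacing $\tilde\eta_{n,\eb}$ by its piecewise--constant--in--time average over a mesh of size $\delta$, each integral over a mesh cell collapses to a pairing $(\psi_{n,j},\bar u_n(\tau_{j+1})-\bar u_n(\tau_j))$; since the spatial averages $\psi_{n,j}$ lie in the fixed finite--dimensional space $H_\eb\subset C_0^\infty(\Omega)$ their weak convergence is strong, and the endpoint values converge weakly in $\Cal H$, so this part passes to the limit and produces the corresponding term for $u^*$. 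For the remainder $\bar\eta_{n,\eb}$ the normality bound \eqref{4.weak}, together with the uniform control of $\int_{-T}^0\|\Dt\bar u_n\|^2$ coming from \eqref{2.main}, gives a contribution bounded by $C\eb^{1/2}$ uniformly in $n$ and in the time--discretization. Collecting these estimates yields $\limsup_n E(v_n)\le E(v^*)+C\eb^{1/2}$ modulo the boundary term $E(\bar u_n(-T))-E(u^*(-T))$, whose nonnegative limit is the remaining delicate point: it is shown to vanish as $T\to\infty$ by feeding the energy equation and the dissipativity back into the uniform convexity inequality along the backward trajectory, in the spirit of \cite{Z15}. Letting first $T\to\infty$ and then $\eb\to0$ gives $\limsup_n E(v_n)\le E(v^*)$, which closes the energy convergence, hence the strong precompactness of $\{v_n\}$, and therefore the coincidence $\Cal A_{un}^s=\Cal A_{un}^w$.
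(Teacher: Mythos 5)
Your overall architecture is the right one and matches the paper up to a point: the reduction of strong convergence to energy convergence via the uniform convexity inequality \eqref{3.uc}, the weak lower semicontinuity of the dissipation integral, and the passage to the limit in the inertial term via \eqref{3.st-c} are all exactly the paper's steps. But the treatment of the force term --- the only place where weak normality enters, and hence the heart of the theorem --- contains a genuine gap. Your plan for the finite-dimensional part $\tilde\eta_{n,\eb}$ is to replace it by its piecewise-constant time averages on a mesh of size $\delta$ and integrate by parts cell by cell. The replacement error $\int_{-T}^0\bigl(\tilde\eta_{n,\eb}-A_\delta\tilde\eta_{n,\eb},\Dt\bar u_n\bigr)\,d\tau$ is never estimated, and it cannot be: Definition \ref{Def4.w-n} imposes \emph{no} condition on $\tilde g_\eb$ beyond membership in $L^2_b(\R,H_\eb)$ --- only the remainder satisfies \eqref{4.weak} --- so the hull elements $\tilde\eta_{n,\eb}$ have no uniform-in-$n$ modulus of continuity in the $L^2$-mean. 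A force such as $\tilde g(t)=\sum_k\chi_{[k,k+1]}(t)\sin(2^kt)\,h$ with $h\in H_\eb$ is weakly normal with zero remainder, yet its averaging error on any fixed mesh is of order one on late windows; and time-oscillating forces are precisely the class the theorem is designed to cover (cf.\ the example $g_0(t/\eb)g_1(x)$ in the introduction). Uniform smallness of averaging errors over the hull is essentially translation compactness, which the theorem deliberately avoids assuming. The paper's resolution is different in kind: it introduces a \emph{corrector} $v_{n,\eb}$ solving the linear Stokes problem \eqref{4.au} with right-hand side $\tilde\xi_{n,\eb}$; since $H_\eb\subset C_0^\infty(\Omega)$, maximal regularity gives the uniform bounds \eqref{4.v-good}, and writing the energy identity for $w_n=u_n-v_{n,\eb}$ makes the bad pairing $(\tilde\xi_{n,\eb},\Dt u_n)$ disappear altogether; the new terms pair strongly convergent quantities ($f(u_n)$ in $L^2(0,1;L^{1+\delta})$, the inertial term in $L^2(0,1;L^2)$) against the weakly convergent $\Dt v_{n,\eb}+Nv_{n,\eb}$, which is harmless.

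Two further steps of yours are also unsupported as stated. First, your $C\eb^{1/2}$ bound for the remainder $\bar\eta_{n,\eb}$ does not follow from \eqref{4.weak} on an unweighted interval $(-T,0)$: the condition only controls integrals over short windows, so summing cells of size $\delta$ produces a factor growing like $(T/\delta)^{1/2}$, which conflicts with the mesh refinement $\delta\to0$ that your averaging step would require. The paper gets smallness by multiplying the equation by $t(\Dt u_n+Nu_n)$, producing the exponentially weighted identity \eqref{4.huge}, and invoking \eqref{4.n-est}/\eqref{4.wn-est}, where the large parameter $N$ makes the weighted remainder term uniformly small. Second, your boundary term $E(\bar u_n(-T))-E(u^*(-T))$ is merely asserted to vanish as $T\to\infty$ ``in the spirit of \cite{Z15}''; weak convergence gives the wrong-signed inequality for this difference and no decay mechanism is provided. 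The paper sidesteps this entirely: it proves compactness of the absorbed set $\Cal B_1=\cup_{\xi}\Cal S_\xi(1)\Cal B$ on the fixed interval $[0,1]$, and the weight $t\,e^{-4N(1-t)}$ kills the initial-energy term (the factor $t$ vanishes at $t=0$), replacing it by $\int_0^1e^{-4N(1-t)}E(u_n(t))\,dt$, which passes to the limit by the strong Aubin--Lions convergences. So your proposal would need to import both the corrector construction and the $N$-weighted energy identity to close; as written, the finite-dimensional part of the force defeats it.
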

\begin{proof} According to Proposition \ref{Prop4.crit}, we only need to find a (pre)compact uniformly absorbing set for the cocycle $\Cal S_\xi(t)$. We claim that the set
\begin{equation}\label{4.comp}
\Cal B_1:=\cup_{\xi\in\Cal H(g)}\Cal S_\xi(1)\Cal B,
\end{equation}
where $\Cal B$ is defined via \eqref{4.abs} is a desired absorbing set. Indeed, due to estimate \eqref{4.dis} this set is absorbing and bounded, so we only need to check its compactness. Let $v_n\in\Cal B_1$ is an arbitrary sequence. Then there exists a sequence of the initial data $u_0^n\in\Cal B$ and a sequence of right-hand sides $\xi_n\in\Cal H(g)$ such that $v_n=u_n(1)$, where $u_n(t):=\Cal S_{\xi_n}u_0^n$ is the corresponding sequence of solutions of problem \eqref{4.BF}. Since $\Cal B$ and $\Cal H(g)$ are compact in a weak topology, we may assume without loss of generality that
$$
u_0^n\rightharpoondown u_0 \  \text{ and }\ \xi_n\rightharpoondown\xi.
$$
Moreover, since the maps $(u_0,\xi)\to\Cal S_\xi(t)u_0$ are continuous in a weak topology for every fixed $t\ge0$, we conclude that $u_n(t)\rightharpoondown u(t)$ in $\Phi$ for every fixed $t$, where
$u(t):=\Cal S_\xi(t)u_0$. In particular, $v_n\rightharpoondown u(1)$. Therefore, we only need to verify that this convergent is actually strong.
\par
 Arguing as in the proof of Corollary \ref{Cor3.cont}, we see that we only need to verify that $E(u_n(1))\to E(u(1))$. In turn, in order to verify this convergence, we will use the energy identity \eqref{3.en} together with the trick suggested in \cite{Z15}.  In order to avoid technicalities, we first give the proof for the particular case where $g$ is normal and after that indicate briefly the changes which should be made to cover the general case where $g$ is only weakly normal.
 \par
 Namely, we multiply equation \eqref{4.BF} for solutions $u_n$ by $Nu_n$, where $N$ is a big positive constant, integrate over $x\in\Omega$ and take a sum with \eqref{3.en} for $u_n$. This gives the following identity:
 \begin{multline}
 \frac12\frac d{dt}E(u_n(t))\!+\!2NE(u_n(t))\!+\!\|\Dt u(t)\|^2_{L^2}\!+\!N(f(u_n).u_n-2F(u_n),1)\\=
 (\xi_n,\Dt u_n+Nu_n)-((u_n,\Nx)u_n,\Dt u_n).
 \end{multline}
 Multiplying this identity by $t$ and integrating over $t\in[0,1]$, we arrive at
 \begin{multline}\label{4.huge}
 E(u_n(1))+2\int_0^1te^{-4N(1-t)}\|\Dt u_n(t)\|^2_{L^2}\,dt+\\+2N\int_0^1te^{-4N(1-t)}(f(u_n(t)).u_n(t)-2F(u_n(t)),1)\,dt=\\=
 \int_0^1e^{-4N(1-t)}E(u_n(t))\,dt+2\int_0^1te^{-4N(1-t)}((u_n(t),\Nx)u_n(t),\Dt u_n(t))\,dt+\\+2N\int_0^1e^{-4N(1-t)}t(\xi_n(t),u_n(t))\,dt+2\int_0^1te^{-4N(1-t)}(\xi_n(t),\Dt u_n(t))\,dt.
 \end{multline}
 We want to pass to the limit $n\to\infty$ in this identity. To this end, we note that by convexity arguments
 $$
 2\int_0^1te^{-4N(1-t)}\|\Dt u(t)\|^2_{L^2}\,dt\le 2\liminf_{n\to\infty}\int_0^1te^{-4N(1-t)}\|\Dt u_n(t)\|^2_{L^2}\,dt.
 $$
 To pass to the limit in the other terms, we recall that, due to estimates \eqref{2.en} and \eqref{2.imp}, the sequence $u_n$ is bounded in the space
 $$
 L^{8/3}(0,1;L^\infty(\Omega)\cap L^2(0,1;W^{1.3q}(\Omega))\cap H^1(0,1;L^2(\Omega))\cap L^\infty(0,1;L^{r+1}(\Omega))
 $$
 and therefore, since this space is compactly embedded to $L^2(0,1;W^{1,2}(\Omega))\cap L^{r+1}(0,1;L^{r+1}(\Omega))$, we have the strong convergence $u_n\to u$ in this space. This allows us to pass to the limit in the last term in the left-hand side of \eqref{4.huge} as well as in the first term in the right-hand side there. Moreover, arguing analogously to the proof of Theorem \ref{Th3.main} (see \eqref{3.st-c}), we prove that $(u_n,\Nx)u_n\to (u,\Nx)u$ strongly in $L^2(0,1;L^2(\Omega))$ and this allows us to pass to the limit in the second term in the right-hand side of \eqref{4.huge}. The third term is obvious since $u_n\to u$ strongly in $L^2(0,1;L^2(\Omega))$ and the only problem is the last term in the right-hand side. In contrast to the other terms, we have here only weak convergence $\xi_n\to\xi$ and $\Dt u_n\to\Dt u$ in $L^2(0,1;L^2(\Omega))$ and cannot use the convexity arguments. Instead, we prove that this term is small when $N$ is large uniformly with respect to $n$ and this will be enough for our purposes.
 \begin{lemma} Under the above assumptions, the following convergence holds:
 \begin{equation}
 \lim_{N\to\infty}\sup_{n\in\Bbb N}\int_0^1te^{-4N(1-t)}|(\xi_n(t),\Dt u_n(t))|\,dt=0.
 \end{equation}
 \end{lemma}
\begin{proof}[Proof of the lemma] Indeed, since $\Dt u_n$ are uniformly bounded in the space  $L^2(0,1;L^2(\Omega))$, due to Cauchy-Schwarz inequality, we have
$$
\int_0^1te^{-4N(1-t)}|(\xi_n(t),\Dt u_n(t))|\,dt\le C\(\int_0^1te^{-4N(1-t)}\|\xi_n(t)\|^2_{L^2}\,dt\)^{1/2}.
$$
It only remains to recall that assumption \eqref{4.norm} implies that
\begin{equation}\label{4.n-est}
\lim_{N\to\infty}\sup_{t\in\R}\sup_{\xi\in\Cal H(g)}\int_{-\infty}^te^{-N(t-s)}\|\xi(s)\|^2_{L^2}\,ds=0,
\end{equation}
see \cite{Z15,Zel22} for the details. This finishes the proof of the lemma.
\end{proof}
We now ready to finish the proof of the theorem for the case of the theorem for the case where $g$ is normal. Let $\eb>0$ be arbitrary. Then, due to the lemma, we may fix $N=N(\eb)$ in such a way that the last term in the right-hand side of \eqref{4.huge} will be less than $\eb$. Passing to the limit $n\to\infty$ in \eqref{4.huge} then gives
 \begin{multline}\label{4.huge1}
 \limsup_{n\to\infty}E(u_n(1))+2\int_0^1te^{-4N(1-t)}\|\Dt u(t)\|^2_{L^2}\,dt+\\+2N\int_0^1te^{-4N(1-t)}(f(u(t)).u(t)-2F(u(t)),1)\,dt\le\\\le
 \int_0^1e^{-4N(1-t)}E(u(t))\,dt+2\int_0^1te^{-4N(1-t)}((u(t),\Nx)u(t),\Dt u(t))\,dt+\\+2N\int_0^1e^{-4N(1-t)}t(\xi(t),u(t))\,dt+\eb.
 \end{multline}
 Comparing this inequality with the identity \eqref{4.huge} for the limit solution $u(t)$ and using the lemma again, we end up with the inequality
 $$
 \limsup_{n\to\infty}E(u_n(1))\le E(u(1))+2\eb.
 $$
 Since $\eb>0$ is arbitrary, passing to the limit $\eb\to0$ gives us the inequality
 \begin{equation}\label{4.nice}
 \limsup_{n\to\infty}E(u_n(1))\le E(u(1)).
\end{equation}
 The opposite inequality follows from the weak convergence $u_n(1)\to u(1)$ in $\Phi$ and the convexity arguments. Thus, we have proved the convergence $E(u_n(1))\to E(u(1))$ which gives us the desired strong convergence $u_n(1)\to u(1)$ in $\Phi$ and finishes the proof of the theorem in the case of normal external forces $g$.
 \par
 Let us assume now that $g$ is weakly normal. In this case, the external forces $\xi_n$ can be split in two parts $\tilde \xi_{n,\eb}\in L^2_b(\R,H_\eb)$, where $\eb>0$ is arbitrary and $H_\eb$ is smooth and finite-dimensional and $\bar\xi_{n,\eb}$ which satisfy \eqref{4.weak} uniformly with respect to $n$. The analogue of  \eqref{4.n-est} now reads
\begin{equation}\label{4.wn-est}
\limsup_{N\to\infty}\sup_{t\in\R}\sup_{\xi\in\Cal H(g)}\int_{-\infty }^te^{-N(t-s)}\|\bar\xi_{\eb}(s)\|^2_{L^2}\,ds\le C\eb,
\end{equation}
where $C$ is independent of $\xi\in\Cal H(g)$, see \cite{Z15}. Therefore, as in the case of normal external forces, the term containing $(\xi_{n,\eb}(s),\Dt u_n(s))$ can be made arbitrarily small by choosing $N$ and $\eb$ big and small enough respectively. In contrast to this, the term containing $(\tilde\xi_{n,\eb}(s),\Dt u_n(s))$ cannot in general be made small and we should treat it in a different way. Namely, we introduce a corrector $v_{n,\eb}(t)$ as a solution of the following linear Stokes problem
\begin{equation}\label{4.au}
\Dt v_{n,\eb}+\Nx q_n=\Dx v_{n,\eb}+\tilde \xi_{n,\eb}(t),\ \ \divv v_{n,\eb}=0,\ \ v_{n,\eb}\big|_{t=0}=0.
\end{equation}
Then, since $\tilde \xi_{n,\eb}$ are uniformly with respect to $n$ smooth, using the anisotropic  $L^2(0,1;L^s(\Omega))$-maximal regularity estimate for the solutions of the Stokes equation, for every finite $s$, we have the uniform estimate
\begin{equation}\label{4.v-good}
\|v_{n,\eb}\|_{C(0,1; W^{1+\delta,2}(\Omega))}+\|\Dt v_{n,\eb}\|_{L^2(0,1;L^s(\Omega))}\le C_{\eb,s}.
\end{equation}
This estimate shows that, without loss of generality, we may assume that
\begin{equation}\label{4.conv}
v_{n,\eb} \to v_{\eb}\ \ \text{in $C(0,1;\Phi)$ and } \Dt v_{n,\eb}\rightharpoondown \Dt v_{\eb} \ \text{in $L^2(0,1; L^s(\Omega))$}.
\end{equation}
Let $w_n:=u_n-v_{n,\eb}$. Then this function solves
\begin{equation}\label{4.BFw}
\Dt w_n+(u_n,\Nx)u_n+\Nx p_n+f(u_n)=\Dx w_n+\bar\xi_{n,\eb},\ \ \divv w_n=0.
\end{equation}
We write the analogue of \eqref{4.huge} for this equation by multiplying it by $t(\Dt w_n+Nw_n)$ and integrating over $x\in\Omega$ and time. Due to the presence of the corrector $v_{n,\eb}$, we will have several extra terms in this equality, namely, the energy $E(u_n(t))$ will be replaced by
$$
\tilde E(u_n(t)):=\frac12\|\Nx w_n(t)\|^2_{L^2}+(F(u_n(t),1).
$$
From the nonlinearity $f$, we will have an extra term which contains
$$
(f(u_n(t)),\Dt v_{n,\eb}(t)+N v_{n,\eb}(t_)).
$$
 This term is not dangerous since from \eqref{4.conv} we have the weak convergence of $\Dt v_{n,\eb}+N v_{n,\eb}$ in $L^2(0,1;L^s(\Omega))$ for all $s<\infty$ and, arguing as above, we may check that $f(u_n)\to f(u)$ strongly in $L^2(0,1;L^{1+\delta}(\Omega))$ for sufficiently small positive $\delta$. We will also have the term containing $(u_n,\Nx)u_n,\Dt v_{n,\eb}+Nv_{n,\eb})$ which is not dangerous as well since we have the strong convergence of $(u_n,\Nx) u_n$ in $L^2(0,1;L^2(\Omega))$.
\par
The main advantage of this modified energy identity is that the bad term containing $(\tilde \xi_{n,\eb},\Dt u_n)$ disappears and we may argue as in the case of normal external forces and verify that
\begin{equation}\label{4.intr}
\limsup_{n\to\infty}\tilde E(u_n(1))\le \tilde E(u(1))+C\eb.
\end{equation}
Moreover, since $v_{n,\eb}(1)\to v_\eb(1)$ strongly in $\Phi$, we may replace
$\tilde E(u_n(1))$ by $E(u_n(1))$ in \eqref{4.intr}. Finally, since $E(u_n(1))$ is independent of $\eb$, we may pass to the limit $\eb\to\infty$ and end up with inequality \eqref{4.nice} in the case of weakly normal external forces as well. Then, as before, we derive the opposite inequality by the convexity arguments and finally establish that $u_n(1)\to u(1)$ strongly in $\Phi$. This finishes the proof of the theorem.
\end{proof}


\begin{thebibliography}{99}
\bibitem{Au09} E. Aulisa, L. Bloshanskaya, L. Hoang, and A. Ibragimov, Analysis of Generalized
Forchheimer Flows of Compressible Fluids in Porous Media, J. of Mat. Phys., 50,
103102 (2009).
\bibitem{Adams} R. Adams and J. Fournier,  Sobolev Spaces, London: Academic Press, 2003.
\bibitem{BV92} A. Babin and M. Vishik, Attractors of evolution equations, Studies in Mathematics
and its Applications, 25. North-Holland Publishing Co., Amsterdam, 1992.
\bibitem{Ball04} J. Ball, Global attractors for damped semilinear wave equations, Discrete Contin. Dyn. Syst. 10
(2004) 31-52.
\bibitem{BIN} O. Besov, V. Ilyin and S. Nikolski, Integral representations of functions and embedding theorems. Second Eddition, Moscow: Nauka, FizMatLit, 1996.
\bibitem{Br49} H. Brinkman, A Calculation of the Viscous Force Exerted by a Flowing Fluid on a
Dense Swarm of Particles, Applied Scientific Research, vol. 1 (1949), 27–34.
\bibitem{CLR12} A. Carvalho, J. Langa, and J. Robinson, Attractors for infinite-dimensional non-autonomous
dynamical systems, Vol. 182. Springer Science and Business Media, 2012.
\bibitem{ChVi02} V. Chepyzhov and  M. Vishik, Attractors for equations of mathematical physics, American Mathematical Society Colloquium Publications, 49. American Mathematical Society, Providence, RI, 2002.

\bibitem{FShi} N. Filinov and T. Shilkin,  On the Stokes problem with nonzero divergence. J Math Sci 166 (2010) 106–117.
\bibitem{GalZ} B. Gallbally and S.Zelik, in preparation.
\bibitem{Galdi} G. Galdi, An Introduction to the Mathematical Theory of the Navier-Stokes Equations, Springer Monographs in Mathematics, Springer New York, NY, 2011.
\bibitem{Gor76} R.  Gordeev, The Existence of a Periodic Solution in a Tide Dynamics Problem,
Jour. Sov. Mat., vol. 6 (1976), 1–4.
\bibitem{HR17} K. Hajduk and J. Robinson, Energy Equality fot the 3D Critical Convective
Brinkman-Forchheimer Equations, J. Diff. Eqns, v. 263 no. 11 (2017), 7141–7161.
 \bibitem{Ipa05} V. Ipatova, Solvability of a Tide Dynamics Model in Adjacent Seas, Russ. J. Numer.
Anal. Math. Modelling, vol. 20, no. 1 (2005), 67–79.
\bibitem{KZ09}  V. Kalantarov and S. Zelik, Finite-dimensional attractors for the quasi-linear strongly-damped wave
equation, J. Diff. Eqns. 247 (2009), no. 4, 1120–1155.
\bibitem{KZ} V. Kalantarov and S. Zelik, Smooth Attractors for the Brinkman-Forchheimer equations with fast growing nonlinearities, CPAA 11, no. 5 (2012) 2037–2054.
\bibitem{KZ21} V. Kalantarov and S. Zelik, Asymptotic Regularity and Attractors for Slightly Compressible Brinkman-
Forchheimer Equations, Appl Math Optim 84 (2021), 3137--3171.
 \bibitem{KlR11}
P.  Kloeden and M. Rasmussen, Nonautomomous Dynamical Systems, Mathematical Surveys and
Monographs, vol. 176, (AMS, Providence, RI) 2011.
\bibitem{Lik81} A. Likhtarnikov, Existence and Stability of Bounded and Periodic Solutions in a Nonlinear Problem of Tidal Dynamics (Russian), in: The direct method in the theory of
stability and its applications, (Irkutsk, 1979), pp. 83–91, 276, Nauka, 1981.
\bibitem{Lou15} M. Louaked, N. Seloula, S. Sun, and S. Trabelsi, A Pseudocompressibility Method for the Incompressible Brinkman-Forchheimer Equations, Differential and Integral Equations, vol. 28 (2015), 361–382.
\bibitem{Lu06} S. Lu, Attractors for nonautonomous 2D Navier-Stokes equations with less regular normal forces, J. Differential Equations, 230 (2006), 196–212.
\bibitem{LWZ05} S. Lu, H. Wu and C. Zhong, Attractors for nonautonomous 2D Navier-Stokes equations with normal external
forces, Discrete Contin. Dyn. Syst., 13(3) (2005) 701–719.
\bibitem{Lu07} S. Lu, Attractors for nonautonomous reaction-diffusion systems with symbols without strong translation compactness, Asymptot. Anal., 54 (2007), 197–210.
 \bibitem{MZS09} S. Ma, C. Zhong and H. Song, Attractors for nonautonomous 2D Navier-Stokes equations with less regular symbols, Nonlinear Anal., 71 (2009), 4215–4222.
 \bibitem{MaO} S. Machihara and T. Ozawa, Interpolation inequalitties in Besov spaces, Proc AMS,
131, no. 5 (2002)  1553–1556.
 \bibitem{Man08} U. Manna, J. Menaldi, and S. Sritharan, Stochastic Analysis of Tidal Dynamics Equation, in Infinite dimensional stochastic analysis, QPPQ: Quantum Probab. White Noise
Anal., vol. 22, World Sci. Publ., Hackensack, NJ, 2008, 90–113.
\bibitem{Mar89} G. Marchuk and B. Kagan, Dynamics of Ocean Tides. Kluwer Academic Publ., 1989.
 \bibitem{MarT16} P. Markowich, E. Titi, and S. Trabelsi, Continuous Data Assimilation for the ThreeDimensional Brinkman-Forchheimer-Extended Darcy Model. Nonlinearity, vol. 29, no.
4 (2016), 1292–1328.
\bibitem{MieZ14} A. Mielke and S. Zelik. On the vanishing-viscosity limit in parabolic systems with rate-independent dissipation terms. Annali Della Scuola Normale Superiore Di Pisa-classe Di Scienze 13 (2014): 67-135.
 \bibitem{MirZ08} A. Miranville and S. Zelik, Attractors for dissipative partial differential equations in bounded and unbounded domains. Handbook of differential equations: evolutionary equations, Handb. Differ. Equ., Elsevier/North-Holland, Amsterdam IV (2008), 103–200.
\bibitem{Moh18} M. Mohan, On the Two-Dimensional Tidal Dynamics System: Stationary Solution
and Stability, Applicable Analysis (2018), 1–32.
\bibitem{Mus37} M. Muskat, The Flow of Homogeneous Fluids Through Porous Media, McGraw-Hill,
New York, 1937.
\bibitem{Raj07} K. Rajagopal, On a Hierarchy of Approximate Models for Flows of Incompressible
Fluids through Porous Solids, Math. Models Methods Appl. Sci., vol. 17 (2007), 215–
252.
\bibitem{Rak} J. R\'akosnik, Some remarks to anisotropic Sobolev spaces II, Beitr\"age zur Analysis 15 (1981), 127--140.
\bibitem{Solo} V. Solonnikov, On estimates of solutions of the non-stationary Stokes problem in anisotropic Sobolev spaces and on estimates for the resolvent of the Stokes operator
Solonnikov,  Russian Math Surveys 58(2) (2003) 331–365.
\bibitem{Stra08} B. Straughan, Stability and Wave Motion in Porous Media, Applied Mathematical
Sciences, vol. 165, Springer, New York, 2008.
\bibitem{tem}  R. Temam,
 {\it Infinite-dimensional dynamical systems in mechanics and physics. Second edition.}
Applied Mathematical Sciences, 68. Springer-Verlag, New York, 1997.
\bibitem{Wa08}B. Wang and S. Lin, Existence of Global Attractors for the Three-Dimensional
BrinkmanForchheimer Equation, Math. Meth. Appl. Sci., vol. 31 (2008), 1479–1495.
\bibitem{You12} Y. You Y, C. Zhao, and S. Zhou, The Existence of Uniform Attractors for 3D BrinkmanForchheimer Equations, Disc. Cont. Dyn. Syst., vol. 32 (2012), 3787–3800.
\bibitem{Zal83} C. Zalinescu, On Uniformly Convex Functions, J. Math. Anal. Appl. 95 (1983), 344-374.
\bibitem{Z15} S. Zelik, Strong uniform attractors for non-autonomous dissipative PDEs with non translation-compact external forces, Discrete Contin. Dyn. Syst. Ser. B 20 (2015), no. 3, 781--810.
\bibitem{Zel22}  S. Zelik, Attractors. Then and now, arXiv:2208.12101.
\bibitem{ZhiP14} V.  Zhikov and S. Pastukhova, Uniform convexity and variational convergence, Tr. Mosk. Mat. Obs., 2014, Vol. 75, no.  2 (2014)  245–276.
\end{thebibliography}
\end{document}